\title{Quantum Collections}
\author{Andre Kornell}
\address{Department of Mathematics, University of California, Berkeley, CA 94720-3840}
\email{kornell@math.berkeley.edu}
\thanks{The research reported here was supported by National Science Foundation grants DMS-0753228 and DMS-1066368.}
\newtheorem{theorem}{Theorem}[section]
\newtheorem{lemma}[theorem]{Lemma}
\newtheorem{proposition}[theorem]{Proposition}
\newtheorem{corollary}[theorem]{Corollary}
\theoremstyle{definition}
\newtheorem{definition}[theorem]{Definition}
\theoremstyle{remark}
\theoremstyle{plain}
\newtheorem*{theorem*}{Theorem}
\newtheorem*{lemma*}{Lemma}
\newtheorem*{proposition*}{Proposition}
\theoremstyle{definition}
\newtheorem*{definition*}{Definition}
\theoremstyle{remark}
\newcommand{\subsetof}{\subseteq}
\newcommand{\To}{\rightarrow}
\newcommand{\Tensor}{\bigotimes}
\newcommand{\tensor}{\otimes}
\newcommand{\iso}{\cong}
\newcommand{\Union}{\bigcup}
\newcommand{\intersect}{\cap}
\newcommand{\suchthat}{\,|\,}
\newcommand{\inv}{^{-1}}
\newcommand{\Hom}{\mathop{Hom}}
\newcommand{\Set}{\mathbf{Set}}
\newcommand{\Wstar}{\mathbf{W^*}}
\newcommand{\pWstar}{\mathbf{pW^*}}
\newcommand{\Cstar}{\mathbf{C^*}}
\newcommand{\cCstar}{\mathbf{cC^*}}
\newcommand{\Dsum}{\bigoplus}
\newcommand{\cWstar}{\mathbf{cW^*}}
\newcommand{\Ast}{\mathop{\scalebox{1.5}{\raisebox{-0.2ex}{$\ast$}}}}
\newcommand{\Ob}{\mathrm{Ob}}
\newcommand{\stensor}{\overline{\otimes}}
\renewcommand{\o}{\vphantom{M}^\circ}
\renewcommand{\H}{\mathcal H}
\newcommand{\R}{\mathcal R}
\newcommand{\B}{\mathcal B}
\newcommand{\M}{\mathcal M}
\newcommand{\N}{\mathcal N}
\newcommand{\K}{\mathcal K}
\renewcommand{\L}{\mathcal L}
\renewcommand{\P}{\mathcal P}
\newcommand{\I}{\mathcal I}
\renewcommand{\S}{\mathcal S}
\newcommand{\CC}{\mathbb C}
\newcommand{\RR}{\mathbb R}
\newcommand{\TT}{\mathbb T}
\newcommand{\NNN}{\mathfrak N}
\newcommand{\Cl}{\mathbf C}
\newcommand{\Dl}{\mathbf D}
\newcommand{\Kl}{\mathbf K}
\begin{document}

\begin{abstract}
We develop the viewpoint that $\o\Wstar$, the opposite of the category of $W^*$-algebras and unital normal $*$-homomorphisms, is analogous to the category of sets and functions. For each pair of $W^*$-algebras $\M$ and $\N$, we construct their free exponential $\M^{\ast \N}$, which in the context of this analogy corresponds to the collection of functions from $\N$ to $\M$. We also show that every unital normal completely positive map $\M \To \N$ arises naturally from a normal state on $\M^{\ast \N}$.
\end{abstract}
\maketitle

\section{Introduction}

We examine the category $\Wstar$ of  $W^*$-algebras and unital normal $*$-homomorphisms. In particular, we develop the viewpoint that its opposite category $\o\Wstar$ is the category of ``quantum collections and functions'', up to a canonical equivalence of categories. We will generally omit the adjective `quantum', and refer to these objects simply as collections. Not even all ``commutative'' collections are collections in the conventional sense.

This viewpoint leads us to a pair of operator-theoretic results. Recall that within $\Set$, the category of sets and functions, the set of all functions from a set $X$ to a set $Y$ is defined up to a canonical bijection by a universal property. We are thusly led to the following:

\begin{theorem}\label{Introduction.A}
Let $\M$ and $\N$ be $W^*$-algebras. There is a $W^*$-algebra $\M^{\ast \N}$, and a unital normal $*$-homomorphism $\varepsilon: \M \To \M^{\ast \N} \stensor \N$, such that for any other $W^*$-algebra $\R$, and unital normal $*$-homomorphism $\pi: \M \To \R \stensor \N$, there is a unique unital normal $*$-homomorphism $\rho: \M^{\ast \N} \To \R$ such that $\pi = (\rho  \stensor 1) \circ \varepsilon$. This \emph{free exponential} $W^*$-algebra $\M^{\ast \N}$ is unique up to a canonical isomorphism.
\end{theorem}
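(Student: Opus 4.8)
The plan is to recognize the asserted universal property as the corepresentability of a single covariant functor and then to build the representing object. A unital normal $*$-homomorphism $\rho : \M^{\ast\N} \To \R$ with $\pi = (\rho \stensor 1)\circ\varepsilon$ is precisely the datum witnessing a bijection $\Hom_{\Wstar}(\M^{\ast\N}, \R) \iso \Hom_{\Wstar}(\M, \R \stensor \N)$, natural in $\R$; equivalently, $-\stensor\N$ is to admit a left adjoint $L$, with $\M^{\ast\N} = L(\M)$ and $\varepsilon$ the unit $\M \To UL(\M) = \M^{\ast\N}\stensor\N$. The uniqueness-up-to-canonical-isomorphism clause is then immediate from the Yoneda lemma, so the entire content of the statement is the existence of the representing pair $(\M^{\ast\N},\varepsilon)$.

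To see what the object should be, I would first test the finite commutative case $\N = \CC^n$. There $\R\stensor\CC^n \iso \R \oplus \cdots \oplus \R$, so a map $\M \To \R\stensor\CC^n$ is an $n$-tuple of unital normal $*$-homomorphisms $\M \To \R$, and the universal property forces $\M^{\ast\CC^n}$ to be the $n$-fold free product $\M \Ast \cdots \Ast \M$, whose maps to $\R$ are exactly such $n$-tuples. This both explains the notation and dictates the general strategy: present $\M^{\ast\N}$ as a universal $W^*$-algebra with relations asserting that $m \mapsto \varepsilon(m)$ is a unital $*$-homomorphism into $\M^{\ast\N}\stensor\N$. Concretely I would work through the predual: slicing by functionals $\phi\in\N_*$ turns $\varepsilon$ into a family of normal maps $(\mathrm{id}\stensor\phi)\circ\varepsilon : \M\To\M^{\ast\N}$, and these coefficients furnish the generating family, subject to the relations that reconstitute a genuine $*$-homomorphism into the tensor product.

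The main obstacle is that free objects in $\Wstar$ do not exist unconditionally: a naive universal $C^*$-algebra followed by its enveloping von Neumann algebra fails to have the correct \emph{normal} universal property, since there is no a priori bound on the representations that must be accommodated. The crux is therefore a solution-set estimate: I must show that every $\pi : \M \To \R \stensor \N$ factors through a sub-$W^*$-algebra of $\R$ whose density character is bounded by a cardinal depending only on $\M$ and $\N$. Granting such a bound, one may either assemble $\M^{\ast\N}$ directly from the resulting set of quotients, or invoke Freyd's adjoint functor theorem: $\Wstar$ is complete (products are $\ell^\infty$-direct sums, equalizers are fixed-point sub-$W^*$-algebras), and $-\stensor\N$ preserves these limits because the von Neumann tensor product commutes with direct sums and with inclusions of von Neumann subalgebras. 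The bounded-factorization claim is exactly the solution set condition, and establishing it—controlling the size of the von Neumann algebra generated by $\pi(\M)$ inside $\R\stensor\N$—is where the genuine analytic work lies. Once existence is secured, naturality of $\varepsilon$ and canonical uniqueness follow formally.
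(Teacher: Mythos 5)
Your overall architecture is the paper's: the theorem is the corepresentability of $\R \mapsto \Hom(\M, \R\stensor\N)$, the uniqueness clause is formal, and everything reduces to a bounded-factorization (solution-set) statement, after which $\M^{\ast\N}$ is assembled from the set of bounded factorizations; the paper takes exactly the ``assemble directly from the resulting set of quotients'' branch of your alternative, realizing $\M^{\ast\N}$ inside $\bigoplus_\sigma \S_\sigma$ over all morphisms $\sigma: \M \To \B(\ell^2_\kappa)\stensor\N$. However, the one step you defer --- controlling the size of the von Neumann algebra generated by $\pi(\M)$ inside $\R\stensor\N$ --- is the entire analytic content of the theorem, and you give no argument for it, so as written this is a genuine gap rather than a routine verification. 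The paper closes it with Lemma \ref{Bounding.B}: represent $\N\subsetof\B(\K)$, fix an orthonormal basis $\{\zeta_\alpha\}$ of $\K$, and let $\S$ be the von Neumann algebra generated by the slices $(1\tensor\hat\zeta_\alpha^*)\,\pi(m)\,(1\tensor\hat\zeta_\beta)$. Treating first the case $\N=\B(\K)$, multiplication by matrix units $\hat\zeta_\gamma\hat\zeta_\alpha^*$ shows these slices lie in any $\tilde\S$ with $\pi(\M)\subsetof\tilde\S\stensor\B(\K)$, while the expansion $\pi(m)=\sum_{\alpha,\beta}\bigl[(1\tensor\hat\zeta_\alpha^*)\pi(m)(1\tensor\hat\zeta_\beta)\bigr]\tensor\hat\zeta_\alpha\hat\zeta_\beta^*$ shows that $\S$ itself works; the general case reduces to this one, and counting generators and normal states (Lemma \ref{TheCategory.C}) bounds a faithful representation of $\S$ by dimension $2^{\aleph_0\cdot\dim\H\cdot\dim\K}$. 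Your proposal to slice by $\phi\in\N_*$ is the right germ of this, but promoting it to a proof requires either the explicit matrix-unit computation above (slicing against $\B(\K)$ rather than $\N$) or an appeal to the slice-map theorem, and in either case the minimality and the cardinality bound still have to be established.

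A further caution on your Freyd-adjoint-functor branch: it requires proving \emph{up front} that $-\stensor\N$ preserves equalizers, i.e., that $\{x\in\M\stensor\N : (\pi_0\stensor1)(x)=(\pi_1\stensor1)(x)\}$ equals $\M_=\stensor\N$. This is true, but it is itself a slice-map/commutation theorem and not a formality; the paper deliberately avoids assuming it, constructing the left adjoint by hand and obtaining limit-preservation of $-\stensor\N$ only afterwards, as Corollary \ref{Free.F} of the adjunction. That this cannot be taken for granted is precisely the moral of Section \ref{Categorical}: the superficially similar functor $-\,\tilde\tensor\, L^\infty(\TT,\lambda)$ fails to preserve equalizers (Theorem \ref{Categorical.D}) and therefore has no left adjoint, so any argument that begins by asserting limit-preservation of a tensor functor owes a proof at exactly that point.
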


Let's write $\o\M$ to denote the $W^*$-algebra $\M$ considered as an object of $\o\Wstar$. Thus, we imagine that up to a canonical equivalence of categories, $\o \M$ is a collection. In this sense, $\o \M^{\ast \N}$ is the collection of functions from $\o \N$ to $\o\M$, and $\o \M^{\ast \N} \stensor \N$ is the Cartesian product of $\o \M^{\ast \N}$ and $\o \N$. Applying the same notation to the morphisms of $\o \Wstar$,  we may call $\o \varepsilon:  \o \M^{\ast \N} \stensor \N \To \o \M$ the evaluation function. 
The notation $\M^{\ast \N}$ is justified by the fact that $\M^{\ast \CC^n}$ is the $n$-fold free power of $\M$.

We will sometimes think of each $W^*$-algebra as being the algebra of observables of some quantum system. Following Kraus \cite{Kraus83}, we will then think of each unital normal completely positive map $\M \To \N$ as corresponding to a quantum operation in the opposite direction. We'll thus be led to view $\o \pWstar$, the opposite of the category of $W^*$-algebras and unital normal completely positive maps, as the category of quantum systems and quantum operations. 

We can interpret this model from our central viewpoint by saying that if $\M$ is the $W^*$-algebra of observables of some quantum system, then $\o \M$ is the collection of all possible configurations of that quantum system. Every $*$-homomorphism is completely positive, so each morphism of $\o \Wstar$ corresponds to a quantum operation; as a function, it assigns a configuration of the codomain system to each configuration of the domain system. Thus, the morphisms of $\o \Wstar$ correspond to \emph{deterministic} quantum operations.

We give two arguments for the position that, in general, a quantum operation is a \emph{probabilistic} assignment of a final configuration to each initial configuration. The first of these arguments begins with the observation that each set is a quantum collection via the correspondence $X \mapsto \o \ell^\infty(X)$. Given any pair of sets $X$ and $Y$, a morphism $X \To Y$ in $\o \Wstar$ is just a function in the ordinary sense, whereas a morphism $X \To Y$ in $\o \pWstar$ assigns a probability distribution on $Y$ to each element of $X$.
The second argument for this position relies on the following operator-theoretic result:

\begin{theorem}\label{Introduction.B}
Let $\M$ and $\N$ be $W^*$-algebras. Let $\psi: \M \To \N$ be a unital normal completely positive map. There exists a normal state $\mu: \M^{\ast \N} \To \CC$ such that $\psi = (\mu \stensor 1) \circ \varepsilon$.
\end{theorem}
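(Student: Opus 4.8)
The plan is to reduce the statement to a dilation theorem and then feed the result into the universal property of Theorem~\ref{Introduction.A}. First I would show that it suffices to factor $\psi$ as $\psi = (\omega \stensor 1) \circ \pi$, where $\R$ is a $W^*$-algebra, $\pi : \M \To \R \stensor \N$ is a unital normal $*$-homomorphism, and $\omega : \R \To \CC$ is a normal state. Granting this, Theorem~\ref{Introduction.A} supplies a unique unital normal $*$-homomorphism $\rho : \M^{\ast\N} \To \R$ with $\pi = (\rho \stensor 1) \circ \varepsilon$, whence $\mu := \omega \circ \rho$ is a normal state on $\M^{\ast\N}$ with $(\mu \stensor 1)\circ\varepsilon = (\omega\stensor 1)\circ(\rho\stensor 1)\circ\varepsilon = (\omega\stensor 1)\circ\pi = \psi$. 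So the entire content is the construction of the dilating $*$-homomorphism $\pi$.

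For the dilation I would run a Stinespring argument relative to a faithful normal representation $\N \subseteq \B(\K)$. Complete positivity makes $\langle m \tensor k,\, m' \tensor k'\rangle := \langle k,\, \psi(m^*m')k'\rangle$ a positive semidefinite form on $\M \odot \K$; separating and completing yields a Hilbert space $\H$, a normal unital representation $\pi_0 : \M \To \B(\H)$, and an isometry $V : \K \To \H$ with $\psi(m) = V^*\pi_0(m)V$ and $\H = \overline{\pi_0(\M)V\K}$. The commutant then lifts: the formula $\rho(n')(m\tensor k) := m \tensor n'k$, bounded by $\|n'\|$ by another appeal to complete positivity, defines a normal unital $*$-representation $\rho : \N' \To \B(\H)$ whose range commutes with $\pi_0(\M)$ and which intertwines $V$, i.e. $\rho(n')V = Vn'$.

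The hard part will be upgrading the mere inclusion $\rho(\N') \subseteq \pi_0(\M)'$ to a genuine tensor factorization. If $\rho$ is unitarily equivalent to the amplification $n' \mapsto 1 \tensor n'$ on $\H_1 \tensor \K$, the commutation theorem gives $\rho(\N')' = \B(\H_1) \stensor \N$, so $\pi_0$ already lands in $\R \stensor \N$ with $\R := \B(\H_1)$. In general $\rho$ is only a \emph{corner} of such an amplification, and a corner of $\B(\ell^2)\stensor\N$ need not be a tensor product at all; this is the real obstacle. I would resolve it by enlarging the dilation, adjoining a sufficiently large multiple of the identity representation of $\N'$ and extending $\pi_0$ across the new summand via an auxiliary faithful normal representation of $\M$ landing in the ambient $\B(\ell^2(\Gamma))$-factor, so that the lifted $\rho$ becomes a \emph{full} amplification. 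This rests on the standard multiplicity theory: every normal representation of $\N'$ is quasi-contained in a large enough amplification of the faithful identity representation, and taking $\Gamma$ large makes the complementary corner big enough to carry a copy of $\M$.

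Finally I would normalize the intertwiner. With $\pi_0 : \M \To \B(\H_1) \stensor \N$ and $\H \iso \H_1 \tensor \K$, both $V$ and the vector embedding $J_\eta : k \mapsto \eta \tensor k$, for a unit vector $\eta \in \H_1$, are isometric intertwiners for the $1 \tensor n'$ action, so their range projections are Murray--von Neumann equivalent inside the commutant $\B(\H_1)\stensor\N$; taking $\H_1$ infinite dimensional, the complementary projections are equivalent as well, and a unitary $U \in \B(\H_1)\stensor\N$ satisfies $UV = J_\eta$. Setting $\pi(m) := U\pi_0(m)U^*$ and letting $\omega = \omega_\eta$ be the vector state, I would compute $(\omega_\eta \stensor 1)\circ\pi = J_\eta^*\,U\pi_0(\cdot)U^*\,J_\eta = V^*\pi_0(\cdot)V = \psi$, which is the required factorization and closes the argument.
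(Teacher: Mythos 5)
Your proposal follows the same architecture as the paper's proof: reduce to producing a factorization $\psi = (\omega \stensor 1)\circ\pi$ through some $\R \stensor \N$ and then invoke the universal property of $\varepsilon$; build $\pi$ from the normal Stinespring dilation $\psi = V^*\pi_0(\cdot)V$ with $\H = \overline{\pi_0(\M)V\K}$; lift $\N'$ to a normal representation on $\H$ commuting with $\pi_0(\M)$ and intertwined by $V$; and finally identify this representation of $\N'$ with an amplification of the identity representation so that the commutation theorem places $\pi_0$ inside $\B(\H_1)\stensor\N$ and a vector state recovers $\psi$. This is precisely the content of Lemma \ref{Completely.C}. The one place you genuinely diverge is the mechanism for turning the lifted $\N'$-action into a full amplification: the paper amplifies both representations by $\ell^2_\kappa$ and proves a bespoke transfinite back-and-forth lemma (Lemma \ref{Completely.B}) whose unitary, by construction, extends $V$ in the $(e_0,e_0)$ corner, so the unitary equivalence and the corner condition are obtained in one stroke; you instead adjoin a large multiple of the identity representation, appeal to multiplicity theory, and repair the intertwiner afterwards by comparing range projections in $\B(\H_1)\stensor\N$. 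Both routes work; the paper's is self-contained, yours leans on standard structure theory.

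The only step you state too quickly is the last one. From $VV^*$ being Murray--von Neumann equivalent to $J_\eta J_\eta^* = \eta\eta^* \tensor 1$ you cannot conclude that the complementary projections are equivalent merely because $\H_1$ is infinite-dimensional: cancellation of projections fails in general von Neumann algebras, and in the non-$\sigma$-finite setting even two properly infinite projections with full central support need not be equivalent. The fix is available inside your own construction: arrange the adjoined summand $\ell^2(\Gamma)\tensor\K$ to be orthogonal to the range of $V$ and take $|\Gamma| = \dim \H_1$, so that $1 - VV^*$ dominates a projection of $\B(\H_1)\stensor\N$ carrying a multiplicity-$|\Gamma|$ copy of the identity representation of $\N'$ and hence equivalent to $1$; the Schr\"oder--Bernstein theorem for projections then gives $1 - VV^* \sim 1 \sim 1 - \eta\eta^*\tensor 1$, and the partial isometry $J_\eta V^*$ extends to the desired unitary $U$. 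With that patch the argument closes.
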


A normal state $\mu: \M^{\ast \N} \To \CC$ is the same thing as a probability distribution on $\o\M^{\ast \N}$, the collection of all functions from $\o \N$ to $\o \M$. Formally, this begs the question, but the analogy between states and measures that we've adapted here is basic to noncommutative mathematics. The collection $\o \CC$ is a one-point set, so the quantum operation $\o \mu: \o \CC \To \o \M^{\ast \N}$ randomly selects a configuration of $\o \M^{\ast \N}$. Thus, any quantum operation $\o \psi$ in $\o \pWstar$ can be implemented by randomly selecting a deterministic quantum operation, and then applying it. This statement is false in the ordinary sense, as the \emph{set} of deterministic quantum operations may be empty.

If $\M$ is the direct sum of $\M_0$ and $\M_1$, we say that $\o \M_0$ and $\o \M_1$ are subcollections of $\o \M$. Every collection has a unique maximum subset, and the unique maximum subset of $\o \M^{\ast \N}$ is indeed the set of all functions from $\o\N$ to $\o \M$. Our position is that the collection $\o \M^{\ast \N}$ contains other functions from $\o \N$ to $\o \M$, which cannot be considered individually. Thus, Theorem \ref{Introduction.B} serves to relate the paper's central viewpoint to the idea that unital normal completely positive maps correspond to probabilistic functions, i.e., to probabilistic quantum operations.

It's my pleasure to record my gratitude to Sridhar Ramesh and Dmitri Pavlov, who have patiently answered my questions about category theory and operator theory, respectively. I have also been a beneficiary of their activity in the mathematical community. This paper would not have been possible without the guidance and support of my advisor, Marc Rieffel, whose unwavering encouragement has become an invaluable lesson for this young researcher.

Our reasoning will not require any results beyond the elementary theories of categories and operator algebras, because we will prove some results anew. In particular, the papers of Guichardet \cite{Guichardet66} and Dauns \cite{Dauns72} examine the category $\Wstar$, but we will reestablish its basic properties without referring to these papers. The papers of Tambara \cite{Tambara90} and So\l tan \cite{Soltan06} are other notable predecessors; they construct analogous exponential spaces, but both assume finite-dimensionality. I thank Alexandru Chirvasitu for pointing these papers out to me, and apologize to those authors whose relevant research remains unknown to me. I also thank Alexandru Chirvasitu and Sander Uijlen for pointing out a typo in the proof of Theorem \ref{Free.A}.

I have cited other papers, whose focus on the intersection of logic and quantum theory make them more kin than predecessors to the present paper. Specifically, the papers of Weaver \cite{Weaver10}, and of Heunen, Landsman and Spitters \cite{HeunenLandsmanSpitters07} \cite{HeunenLandsmanSpitters09}, have guided my thinking on what might be termed quantum first-order logic.

Sections 2 and 3 provide the reader with minimal background on category theory and noncommutative mathematics, respectively. Section 4 presents the desired intuition for the opposite category $\o \Wstar$. Section 5 develops the elementary properties of the category $\Wstar$, and concludes by showing that $\Wstar$ does not have all coexponentials in the usual sense. Section 6 defines the categorical tensor product, and concludes similarly, by showing that $\Wstar$ fails to have all coexponentials relative to this tensor product. Section 7 reviews the usual ``spatial'' tensor product. Section 8 contains a number of lemmas, which essentially provide bounds on the size of $\M^{\ast \N}$. Section 9 defines the free exponential $\M^{\ast \N}$, and discusses its basic properties. Section 10 provides the reader with minimal background on completely positive maps. Section 11 contains the proof of Theorem \ref{Introduction.B}. Section 12 discusses measurement from the viewpoint of collections and functions. Section 13 is an appendix, which contains a proof of Stinespring's Theorem \cite{Stinespring55} for normal completely positive maps.

This paper contains far more material than necessary for the proof of Theorem \ref{Introduction.B}. The busy, knowledgeable reader may prefer to read just Lemma 9.2, Theorem 10.1, and Lemma 12.3. Much of the material is intended as an extensive introduction, not only to this paper, but also to two others that the author plans to write, on the tensor exponential $\M^{\stensor \N}$, and on the internal logic of $\o \Wstar$.

\section{Background: Category Theory}
The results that follow are stated in the language of category theory, and are motivated by it. This section reviews the most essential definitions; it is assumed that the reader has already seen limits, functors and natural transformations. The first chapter of MacLane and Moerdijk's \emph{Sheaves in Geometry and Logic} \cite{MacLaneMoerdijk92} is recommended as a compact, but thorough summary. The category $\Set$ of sets and functions showcases the desired intuition for the definitions that follow.

The product of an indexed family of objects $\{X_\alpha\}$  is an object $X$ together with an indexed family  of morphisms $\{p_\alpha: X \To X_\alpha\}$ that is universal among such cones, in the sense that if $Y$ is another object together with an indexed family of morphism $\{f_\alpha: Y \To X_\alpha\}$, then there is a unique morphism $f: Y \To X$ such that $p_\alpha \circ f = f_\alpha$ for all $\alpha$. Like other objects defined by a universal property, the product of $\{X_\alpha\}$ is unique up to a canonical isomorphism. In $\Set$, the product is the Cartesian product together with its projection functions.

In a category with finite products, the exponential of two objects $Y$ and $Z$ is an object $Z^Y$ together with a morphism $e: Z^Y \times Y \To Z$ that is universal among such pairs, in the sense that if $X$ is another object  and $f: X \times Y \To Z$ is another morphism, then there exists a unique map $\lambda f: X \To Z^Y$ such that $e \circ (\lambda f \times 1_Y) = f$. Here, $\lambda f \times 1_Y: X \times Y \To Z^Y \times Y$ is the morphism obtained by apply the defining universal property of products to the maps $f \circ p_X$ and $p_Y$, where $X\times Y$ together with the maps $p_X$ and $p_Y$ is the product of $X$ and $Y$. In $\Set$, $Z^Y$ is the set of all functions from $Y$ to $Z$, $e$ is the evaluation function, and $\lambda f$ is the function $f$ that has been curried on its first argument, i.e., $\lambda f: x \mapsto f(x, \cdot) $.

A category with all finite products and all exponentials, is said to be Cartesian closed. In a Cartesian closed category, the functor $ - \times Y$ has right adjoint $(-)^Y$, which means that there is a natural isomorphism between $\Hom(X \times Y, Z)$ and $\Hom (X, Z^Y)$ as functors in $X$ and $Z$, and in fact, also in $Y$. This isomorphism is given by $f \mapsto \lambda f$.

Given a category $\Cl$, its opposite $\o\Cl$ is the category whose objects and morphisms are those of $\Cl$, but whose morphisms are formally reversed in the sense that $\Hom_{\o\Cl}(X, Y) = \Hom_{\Cl}(Y,X)$. The dual of any universal construction is obtained by performing it in the opposite category; for example, the coproduct of objects $X$ and $Y$ in $\Cl$ is the product of $X$ and $Y$ in $\o\Cl$. In $\Set$, the coproduct of $X$ and $Y$ is their disjoint union.

Two categories $\Cl$ and $\Dl$ are equivalent in case there are functors $F: \Cl \To \Dl$ and $G: \Dl \To \Cl$ such that $G \circ F$ is naturally isomorphic to the identity on $\Cl$ and $F \circ G$ is naturally isomorphic to the identity on $\Dl$. The category $\Cl$ is said to be dual to $\Dl$, if it's equivalent to $\o \Dl$.

\section{Background: Noncommutative Mathematics}

The category $\cCstar$ of commutative unital $C^*$-algebras and unital $*$-homomorphisms is dual to the category $\Kl$ of compact Hausdorff spaces and continuous functions. In one direction, we have the contravariant functor $C: \Kl \To \cCstar$ such that if $X$ is a compact Hausdorff space then $C(X)$ is the $C^*$-algebra of continuous complex-valued functions on $X$, and if $f: X \To Y$ is a continuous function then $C(f): C(Y) \To C(X)$ is precomposition with $f$. In the other direction, we have the contravariant functor $\sigma: \cCstar \To \Kl$ such that if $A$ is a commutative unital $C^*$-algebra then $\sigma(A)$ is the Gelfand spectrum of $A$, and if $\pi: A \To B$ is a unital $*$-homomorphism then $\sigma(\pi): \sigma(B) \To \sigma (A)$ is precomposition with $\pi$.

If $\Cstar$ is the category of all unital $C^*$-algebras, then the above shows that $\Kl$ is equivalent to a full subcategory of $\o\Cstar$. Noncommutative mathematics views $\o\Cstar$ as a category of generalized compact Hausdorff spaces. This generalization is justified in part by quantum theory, in which observables are identified with self-adjoint operators in a characteristically noncommutative operator algebra. Indeed, the self-adjoint elements of a $C^*$-algebra $A$ of norm at most $1$ are in bijective correspondence with morphisms in $\o \Cstar$ from $A$ to $C[-1,1]$, which we identify with the compact Hausdorff space $[-1,1]$. Noncommutative mathematics studies analogous generalizations of familiar classes of objects.

The present paper focuses on the category $\Wstar$ of $W^*$-algebras, and unital normal $*$-homomorphisms. A $W^*$-algebra $\M$ is a unital $C^*$-algebra which, as a Banach space, is the dual of a Banach space $\M_*$. The Banach space $\M_*$ is unique, and is called the predual of $\M$. A unital $*$-homomorphism $\pi: \M \To \N$ between $W^*$-algebras is normal in case it is continuous with respect to the $w^*$-topologies on $\M$ and $\N$.

The category $\cWstar$ of commutative $W^*$-algebras and unital normal $*$-homomorphisms is dual the category $\mathbf{SLM}$, which we presently describe. An object of $\mathbf{SLM}$ is a measure space that, up to a set of measure zero, is the disjoint union of copies of $\{0\}$ with any multiple of the counting measure and copies of $[0,1]$ with any multiple of Lebesgue measure. A morphism of $\mathbf{SLM}$ is just a function, defined up to a set of measure zero, for which the inverse image of a measure zero set is measure zero. The proof of this duality is straightforward from the basic theory; Theorem 16.7 of Lurie's notes \cite{Lurie11} gets us most of the way there. Note that the construction of a measure space for each commutative $W^*$-algebra does involve a choice of measures, so strictly speaking we only obtain a ``weak'' duality.

The duality between $\cWstar$ and $\mathbf{SLM}$ is revealing in one way, but misleading in another. For example, $\RR$ with Lebesgue measure is isomorphic to $\RR$ with Gaussian measure. Similarly, $\RR$ with the Dirac measure at $0$ is isomorphic to $\{0\}$ with counting measure.  In this sense, the objects of $\mathbf{SLM}$ are neither measure spaces, nor Borel spaces.

The terms `von Neumann algebra' and `$W^*$-algebra' are often used interchangeably. When a distinction is made, as it is in the present paper, a von Neumann algebra is a $W^*$-algebra of bounded operators on a Hilbert space. Every $W^*$-algebra has a canonical faithful unital normal $*$-representation, namely the universal normal representation, and the morphisms of von Neumann algebras are typically taken to be the same as those of $W^*$-algebras, so the two categories are equivalent. 

The universal normal representation of a $W^*$-algebra $\M$  is the direct sum of all the GNS representations of $\M$ for its normal states. A nicer choice of canonical representation would the standard form \cite{Haagerup75}, but we choose to avoid this more sophisticated notion. All unital normal $*$-representations of $\M$ are essentially equivalent (Lemma \ref{Completely.B}), so our choice of faithful representations is doubly immaterial.

The present paper uses the term `$W^*$-algebra' to emphasize that it is insensitive to the additional structure of von Neumann algebras. The unqualified term `morphism' always means a morphism of $\Wstar$, i.e., a unital normal $*$-homomorphism. Similarly, all states are understood to be normal, and all representations normal and non-degenerate.

\section{$W^*$-Algebras as Collections}

We presuppose the viewpoint that $\o \Wstar$ is equivalent to a category of set-like objects, the ``quantum collections'' of the title. For conciseness, we will refer to the objects and morphisms of $\o \Wstar$ as \emph{collections} and \emph{functions} respectively.

Formally, the objects and morphisms of $\o \Wstar$ are the same as those of $\Wstar$, so the subtle distinction between these categories can be a source of mental drag. We adopt two strategies to mitigate this effect. First, all formal mathematics will be done in terms of the category $\Wstar$. Second, we will use the notation $\o \M$ to refer to a $W^*$-algebra $\M$ as an object of $\o \Wstar$, and similarly for the morphisms of $\o \Wstar$. Thus, $M_2(\CC)$ is a $W^*$-algebra, whereas $\o M_2(\CC)$ is a collection. We define the disjoint union of a family of collections to be the direct sum of the corresponding $W^*$-algebras.

Every set is a collection via the functorial identification $X \mapsto \o \ell^\infty(X)$. However, not even every commutative collection is a set; for example, if $\lambda$ denotes Lebesgue measure, then $\o L^\infty(\RR, \lambda)$ is commutative, but is not a set. Each commutative collection $\o \M$ is the disjoint union of copies of $\o \CC$, and of $\o L^\infty(\RR, \lambda)$. Note that $\o \CC$ is just a one element set, and that there are no functions from $\o \CC$ to $\o L^\infty(\RR, \lambda)$; the collection $\o L^\infty(\RR, \lambda)$ is the native continuum. The complete Boolean algebra $\P(L^\infty(\RR, \lambda))$ is used in set theory to force the existence of a ``random'' real, so $\o L^\infty(\RR, \lambda)$ may be thought of as a collection of random real numbers. 

If $A$ is a unital $C^*$-algebra, then its second dual $A^{**} $ is canonically a $W^*$-algebra, called the enveloping $W^*$-algebra of $A$. We think of $\o A^{**}$ as the collection of points of the noncommutative compact Hausdorff space $\o A$. Every unital $*$-homomorphism of unital $C^*$-algebras $\pi: A \To B$ extends uniquely to a unital normal $*$-homomorphism $\pi^{**}: A^{**} \To B^{**}$; thus, every continuous function between noncommutative compact Hausdorff spaces is given by a function between their collections of points. 

No analogous result is available for noncommutative sets, which we may define as disjoint unions of collections of the form $\o \B(\H)$. The atomic representation yields a noncommutative point set for every noncommutative compact Hausdorff space, but not every continuous function between noncommutative compact Hausdorff spaces defines a function between their noncommutative point sets. This phenomenon is related to the existence of non-diagonalizable operators.

If $X$ is a compact Hausdorff space in the usual sense, we define $L^\infty(X) = C(X)^{**}$. The collection $\o L^\infty(X)$ is the colimit of $\o L(X,\mu)$ for $\mu$ a finite regular measure on $X$. In particular, the collection $\o L^\infty[-1,1]$ is the disjoint union of the set $[-1,1]$ with uncountably many copies of the continuum $\o L^\infty(\RR, \lambda)$. Thus, from the point of view of the category $\o \Wstar$, the usual compact Hausdorff space $[-1,1]$ is missing some fuzzy points. If the $\o L^\infty[-1,1]$ seems unreasonably large, the reader may be reassured by the fact that the self-adjoint operators of a $W^*$-algebra $\M$ of norm at most $1$ are in canonical bijective correspondence with functions from  $\o\M$ to $\o L^\infty[-1,1]$. 

Every von Neumann algebra is the commutant of a group of unitary operators. In this sense, each object $\o \M$ of $\o \Wstar$ can be thought of as a Hilbert space of quantum states modulo a group of symmetries. To indicate this point of view, we will refer to $\o \M$ as a \emph{quantum system}. The $W^*$-algebra $\M$ is then the algebra of observables of this quantum system, and the affine space $S_\M$ of the normalized positive elements in $\M_*$ is its space of states. As a collection, $\o \M$ is the collection of all possible configurations. We use the word `configuration' in an informal, intuitive sense. There is no class of objects each of which may be called a configuration; there is only a collection of configurations.

If $\o \M$ and $\o \N$ are quantum systems, then a morphism $\o \pi: \o \N \To \o \M$ is just a function that takes each configuration of $\o \N$ to a configuration of $\o \M$. Thus, we say that $\o \pi$ is a \emph{deterministic quantum operation}. We imagine that each deterministic quantum operation may be implemented by a physical process, which beginning with an instance of the system $\o \N$ produces and instance of the system $\o \M$. The class of all quantum operations will be defined in Section \ref{Quantum Operations}.

\section{The Category of $W^*$-Algebras}

This section reviews the construction of limits and colimits in $\Wstar$. 
\begin{proposition}
The category $\Wstar$ has all small products. 
\end{proposition}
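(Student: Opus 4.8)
The plan is to take the product to be the $\ell^\infty$-direct product. Given a small family $\{\M_\alpha\}$ of $W^*$-algebras, I would set
\[ \M = \{ (x_\alpha) \in \textstyle\prod_\alpha \M_\alpha \suchthat \sup_\alpha \|x_\alpha\| < \infty \}, \]
with the supremum norm and coordinatewise operations, and the projections $p_\alpha: \M \To \M_\alpha$. The first task is to recall that $\M$ is a $W^*$-algebra: it is visibly a $C^*$-algebra, and as a Banach space it is the dual of the $\ell^1$-direct sum $\M_* = \Dsum_\alpha (\M_\alpha)_*$ of the preduals, via the pairing $\langle (x_\alpha), (\omega_\alpha)\rangle = \sum_\alpha \omega_\alpha(x_\alpha)$, which converges absolutely because $\sum_\alpha \|\omega_\alpha\| < \infty$ while $\sup_\alpha \|x_\alpha\| < \infty$. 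Each $p_\alpha$ is a unital $*$-homomorphism, and it is normal because the inclusion $\iota_\alpha: (\M_\alpha)_* \To \M_*$ of the $\alpha$-th summand is a preadjoint for it, so $p_\alpha = \iota_\alpha^*$ is $w^*$-continuous.

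For the universal property, let $\N$ be a $W^*$-algebra equipped with morphisms $f_\alpha: \N \To \M_\alpha$, and define $f: \N \To \M$ by $f(y) = (f_\alpha(y))_\alpha$. This is well-defined because each $f_\alpha$, being a unital $*$-homomorphism of $C^*$-algebras, is contractive, whence $\sup_\alpha \|f_\alpha(y)\| \le \|y\| < \infty$ and $f(y)$ indeed lies in $\M$. Coordinatewise, $f$ is manifestly a unital $*$-homomorphism, the relation $p_\alpha \circ f = f_\alpha$ holds by construction, and this relation forces $f(y) = (f_\alpha(y))_\alpha$, giving uniqueness.

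The one substantive step, which I expect to be the main obstacle, is the normality of $f$. I would establish it through the predual, using the fact that a bounded linear map into a dual Banach space is $w^*$-continuous precisely when $\omega \circ f \in \N_*$ for every $\omega$ in the predual of the target. Since $\M_* = \Dsum_\alpha (\M_\alpha)_*$, a general $\omega$ is an absolutely summable family $(\omega_\alpha)$, and $\omega \circ f(y) = \sum_\alpha \omega_\alpha(f_\alpha(y))$. Each summand $\omega_\alpha \circ f_\alpha$ lies in $\N_*$ because $f_\alpha$ is normal, and $\sum_\alpha \|\omega_\alpha \circ f_\alpha\| \le \sum_\alpha \|\omega_\alpha\| < \infty$, so the series converges in the norm of $\N^*$; as $\N_*$ is norm-closed in $\N^*$, the limit $\omega \circ f$ lies in $\N_*$, and $f$ is normal. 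The only care needed is in setting up the $\ell^1$--$\ell^\infty$ duality of the preduals cleanly and in invoking norm-closedness of $\N_*$.
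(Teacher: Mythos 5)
Your proposal is correct and follows essentially the same route as the paper: the $\ell^\infty$-direct product with the $\ell^1$-direct sum of preduals as its predual, projections normal via their preadjoints, and the induced map assembled coordinatewise. You in fact supply more detail than the paper, which dispatches the predual verification and the normality of the induced morphism as routine; your argument for normality of $f$ via norm-closedness of $\N_*$ in $\N^*$ is the right way to fill that gap.
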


\begin{proof}
Let $\{\M_\alpha\}_{\alpha \in I}$ be an indexed family of $W^*$-algebras.  Define 
$$
\Dsum_\alpha \M_\alpha\  =\left\{ m: I \To \Union_\alpha \M_\alpha \,\middle|\, m(\alpha)\in \M_\alpha, \, \sup_\alpha \|m(\alpha)\| < \infty\right\},$$ and
$$
\left(\Dsum_\alpha \M_\alpha\right)_*  = \left\{ \mu: I \To \Union_\alpha \M_{\alpha*} \,\middle|\, \mu(\alpha)\in \M_{\alpha*}, \, \sum_\alpha \|m(\alpha)\| < \infty\right\}.
$$

It is routine to verify that $\Dsum_\alpha \M_\alpha$ is a $C^*$-algebra with coordinatewise operations and $\|m\| = \sup_\alpha \|m(\alpha)\|$ for all $m \in \Dsum_\alpha \M_\alpha$, that $(\Dsum_\alpha \M_\alpha)_*$ is its predual, and that the projections $\pi_\alpha: m \mapsto m(\alpha)$ are $w^*$-continuous.

If $\rho_\alpha: \N \To  \M_\alpha $ is a family of morphisms then $\rho(n)(\alpha) = \rho_\alpha(n)$ defines a morphism $\pi: \N \To \Dsum \M_\alpha$ such that $\pi_\alpha \circ \rho = \rho_\alpha$. Such a morphism is necessarily unique because the operators $m(\alpha)$ uniquely determine any given $m \in \Dsum \M_\alpha$.
\end{proof}

If $\{\M_\alpha\subsetof \B(\H_\alpha)\}$ is a family of von Neumann algebras, then $\Dsum_\alpha \M_\alpha$ can be equivalently defined as the von Neumann algebra generated by the operators in $\Union_\alpha \M_\alpha$ acting on the $\ell^2$-direct sum $\Dsum_\alpha \H_\alpha$.

\begin{definition}
The product of a family $\{\M_\alpha\}$ of $W^*$-algebras is their \emph{direct sum}, and we will denote it by $\bigoplus_\alpha \M_\alpha$.
\end{definition}

\begin{proposition}
The category $\Wstar$ has all small limits.
\end{proposition}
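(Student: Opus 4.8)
The plan is to prove that $\Wstar$ has all small limits by invoking the standard category-theoretic fact that a category has all small limits if it has all small products and all equalizers. Since the previous proposition already establishes that $\Wstar$ has all small products, the remaining task is to construct equalizers.

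First I would set up the equalizer problem. Given a parallel pair of morphisms $\pi, \rho: \M \To \N$ in $\Wstar$, I would look for a $W^*$-algebra $\mathcal{E}$ together with a morphism $\iota: \mathcal{E} \To \M$ such that $\pi \circ \iota = \rho \circ \iota$, which is universal among such cones. The natural candidate is the set $\mathcal{E} = \{m \in \M \suchthat \pi(m) = \rho(m)\}$, i.e. the subalgebra on which the two maps agree, with $\iota$ the inclusion. I would verify that $\mathcal{E}$ is a $w^*$-closed unital $*$-subalgebra of $\M$: it is clearly a unital $*$-subalgebra (since $\pi$ and $\rho$ are unital $*$-homomorphisms, the agreement set is closed under the algebraic operations and contains the unit), and it is $w^*$-closed because $\pi$ and $\rho$ are $w^*$-continuous, so their difference has $w^*$-closed kernel. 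The key structural point, which I would need to have available, is that a $w^*$-closed unital $*$-subalgebra of a $W^*$-algebra is itself a $W^*$-algebra, with the inclusion being a normal $*$-homomorphism; this follows from the fact that $\mathcal{E}$, being $w^*$-closed, is the dual of a quotient of $\M_*$ and hence has a predual, making $\iota$ automatically normal.

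Next I would check the universal property. Given any morphism $f: \R \To \M$ with $\pi \circ f = \rho \circ f$, the image of $f$ lands in $\mathcal{E}$ by construction, so $f$ factors through $\iota$ as $f = \iota \circ g$ for a unique map $g: \R \To \mathcal{E}$ (unique because $\iota$ is injective). I would confirm that $g$ is itself a morphism of $\Wstar$, i.e. unital, normal, and a $*$-homomorphism; this is routine once one knows the corestriction of a normal $*$-homomorphism to a $w^*$-closed $*$-subalgebra containing its range is again normal. Having both products and equalizers, I would conclude by the standard limit construction: an arbitrary small limit of a diagram is computed as the equalizer of a suitable pair of maps between products indexed by the objects and the morphisms of the diagram.

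I expect the main obstacle to be the verification that the equalizer subalgebra $\mathcal{E}$ genuinely lives in $\Wstar$ rather than merely being a $C^*$-algebra — specifically, establishing that a $w^*$-closed unital $*$-subalgebra of a $W^*$-algebra has a natural predual and that the inclusion is normal. This is the one place where the abstract $W^*$-algebra formalism (as opposed to the concrete von Neumann algebra setting, where $w^*$-closedness and the double commutant theorem make this transparent) requires a little care. Everything else is a formal consequence of the agreement-set being closed under the relevant operations together with the generic reduction of limits to products and equalizers.
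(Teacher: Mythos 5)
Your proposal is correct and follows essentially the same route as the paper: reduce to products plus equalizers, take the equalizer to be the agreement subalgebra $\{m \in \M \suchthat \pi_0(m) = \pi_1(m)\}$ with its inclusion, and verify the universal property by noting any coequalizing cone lands inside it. The paper simply asserts that this agreement set is a $W^*$-subalgebra, whereas you spell out the (correct) justification via $w^*$-closedness and the induced predual.
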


\begin{proof}
Since $\Wstar$ has all small products, it's sufficient to show that $\Wstar$ has equalizers \cite{MacLane71}. To that end, let $\pi_0, \pi_1 : \M \To \N $ be two morphisms, and define $\M_= = \{m \in \M \suchthat \pi_0(m) = \pi_1(m)\}$, a $W^*$-subalgebra of $\M$, with inclusion $\iota: \M_= \To \M$. By construction, $\pi_0 \circ \iota = \pi_1 \circ \iota$. If $\rho : \R \To \M$ also satisfies this property, then $\rho(\R) \subsetof \M_=$, so $\rho$  factors uniquely through the inclusion $\iota$.  
\end{proof}

A number of the constructions that follow imitate the universal representation construction. The next lemma isolates a key component of these proofs.

\begin{lemma}\label{TheCategory.C}
Let $\kappa$ be a cardinal number. Each $W^*$-algebra generated by $\kappa$ many elements has a faithful representation on a Hilbert space of dimension at most $2^{\aleph_0 \cdot \kappa}$.
\end{lemma}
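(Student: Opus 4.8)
The plan is to bound the dimension of the universal normal representation itself, which is already known to be faithful, rather than building a new representation from scratch. Fix a subset $S \subsetof \M$ with $\card(S) \le \kappa$ that generates $\M$ as a $W^*$-algebra, and let $A$ be the unital $*$-subalgebra generated by $S$ over the countable field $\mathbb{Q}(i)$ of Gaussian rationals. Counting words in $S \union S^*$ and their finitely supported $\mathbb{Q}(i)$-linear combinations shows $\card(A) \le \aleph_0 \cdot \kappa$. Moreover $A$ is norm-dense in the complex $*$-algebra generated by $S$, and hence $w^*$-dense in $\M$, since $\M$ is by hypothesis the $w^*$-closure of that complex algebra.

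Two consequences of the $w^*$-density of the small algebra $A$ drive the argument. First, I would bound the number of normal states. A normal state is an element of the predual $\M_*$, i.e.\ a $w^*$-continuous functional; two such functionals that agree on the $w^*$-dense set $A$ have a $w^*$-closed equalizer containing $A$, hence agree on all of $\M$. Thus each normal state is determined by its restriction to $A$, and the number of normal states is at most the number of functions $A \To \CC$, namely $(2^{\aleph_0})^{\aleph_0 \cdot \kappa} = 2^{\aleph_0 \cdot \kappa}$. Second, I would bound the dimension of each GNS space. For a normal state $\omega$ with GNS data $(\H_\omega, \pi_\omega, \Omega_\omega)$ and canonical map $\Lambda_\omega: x \mapsto \pi_\omega(x)\Omega_\omega$, the image $\Lambda_\omega(\M)$ is dense in $\H_\omega$ by construction, so it suffices to show that $\Lambda_\omega(A)$ is dense in $\Lambda_\omega(\M)$.

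This last density is the technical heart of the argument, and the step I expect to be the main obstacle. The map $\Lambda_\omega$ is norm-contractive, but $A$ is only $w^*$-dense, not norm-dense, so norm-continuity alone does not suffice. The point is that the $\sigma$-strong topology is generated by the seminorms $x \mapsto \varphi(x^*x)^{1/2}$ ranging over normal positive functionals $\varphi$, so $x \mapsto \|\Lambda_\omega(x)\| = \omega(x^*x)^{1/2}$ is one of its defining seminorms and $\Lambda_\omega$ is continuous from the $\sigma$-strong topology to the norm of $\H_\omega$. By the Kaplansky density theorem the unit ball of $A$ is $\sigma$-strongly dense in the unit ball of $\M$; applying $\Lambda_\omega$ and rescaling yields that $\Lambda_\omega(A)$ is dense in $\H_\omega$. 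Since $\card(A) \le \aleph_0 \cdot \kappa$, the Hilbert space $\H_\omega$ has a dense subset of that cardinality, whence $\dim \H_\omega \le \aleph_0 \cdot \kappa$.

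Finally I would assemble the cardinal arithmetic. The universal normal representation acts on $\bigoplus_\omega \H_\omega$, the sum over all normal states $\omega$, and this representation is faithful. Its dimension is $\sum_\omega \dim \H_\omega$, which is at most the number of normal states times the supremal GNS dimension, i.e.\ at most $2^{\aleph_0 \cdot \kappa} \cdot (\aleph_0 \cdot \kappa) = 2^{\aleph_0 \cdot \kappa}$. This exhibits a faithful representation on a Hilbert space of dimension at most $2^{\aleph_0 \cdot \kappa}$, as required. If one prefers to avoid invoking Kaplansky density, one can instead hand-pick a separating family of at most $2^{\aleph_0\cdot\kappa}$ normal states, but the dimension bound on each GNS space still rests on the same $\sigma$-strong approximation, so that ingredient appears unavoidable.
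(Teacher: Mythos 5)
Your proposal is correct and follows essentially the same route as the paper: bound the number of normal states by $2^{\aleph_0\cdot\kappa}$ via their values on a countably-generated dense subalgebra, bound each normal GNS space by $\aleph_0\cdot\kappa$, and take the direct sum of the normal GNS representations. The only difference is one of care rather than strategy: you supply the Kaplansky-density/$\sigma$-strong continuity argument justifying that $\Lambda_\omega(A)$ is dense in $\H_\omega$, a point the paper's proof asserts without comment.
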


\begin{proof}
Let $\M$ be a $W^*$-algebra generated by $\kappa$ elements. If $\mu$ is a normal state on $\M$, then the GNS Hilbert space $\H_\mu$ is the closed span of $\aleph_0 \cdot \kappa$ vectors, and so has dimension at most $\aleph_0 \cdot \kappa$. Furthermore, each normal state is a function $\M \To \CC$, and so is uniquely determined by its values on all words in the chosen generators and their adjoints; therefore, $\M$ has at most $(2^{\aleph_0})^{ \aleph_0 \cdot \kappa} = 2^{\aleph_0 \cdot \kappa}$ normal states. Taking the direct sum of normal GNS representations, we find that $\M$ has a faithful unital normal $*$-representation on a Hilbert space of dimension at most $2^{\aleph_0 \cdot \kappa}$.
\end{proof}

\begin{proposition}
The category $\Wstar$ has all small coproducts. 
\end{proposition}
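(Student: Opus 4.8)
The goal is to show that $\Wstar$ has all small coproducts. Since coproducts in $\Wstar$ are products in $\o\Wstar$, this is the statement that every small family of $W^*$-algebras has a \emph{free product} in the category. The plan is to mimic the universal representation construction, using Lemma~\ref{TheCategory.C} to control the size of the object we are trying to build. For a family $\{\M_\alpha\}_{\alpha \in I}$, a coproduct is a $W^*$-algebra $\Ast_\alpha \M_\alpha$ together with morphisms $j_\alpha : \M_\alpha \To \Ast_\alpha \M_\alpha$ which are universal: any family $\{\rho_\alpha : \M_\alpha \To \R\}$ factors through a unique morphism out of $\Ast_\alpha \M_\alpha$.

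The first step is to fix a cardinal bound on the candidate codomains $\R$ that we need to consider. Each $\M_\alpha$ is generated by some number of elements; let $\kappa$ be a cardinal at least as large as $|I|$ and as large as the number of generators of each $\M_\alpha$, so that the whole family is ``generated by $\kappa$ many elements'' in the aggregate. Any $W^*$-algebra $\R$ that receives a jointly generating family of images $\rho_\alpha(\M_\alpha)$ is itself generated by at most $\kappa$ elements, hence by Lemma~\ref{TheCategory.C} has a faithful representation on a Hilbert space of dimension at most $\lambda := 2^{\aleph_0 \cdot \kappa}$. Therefore it suffices to consider codomains realized on a fixed Hilbert space $\H$ of dimension $\lambda$: up to isomorphism, every relevant target is a von Neumann subalgebra of $\B(\H)$.

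The second step is to form the object itself. I would let $\{(\R_s, \{\rho_{\alpha,s}\})\}_{s \in S}$ range over a \emph{set} of representatives of all families of morphisms $\{\rho_{\alpha,s} : \M_\alpha \To \R_s\}$ where $\R_s \subseteq \B(\H)$ is a von Neumann algebra and the images jointly generate $\R_s$; this is a set because both $\B(\H)$ and the morphism spaces $\Hom(\M_\alpha, \B(\H))$ are sets of bounded cardinality. Then take the product representation $\bigoplus_s \rho_{\alpha,s} : \M_\alpha \To \bigoplus_s \R_s$, and define $\Ast_\alpha \M_\alpha$ to be the $W^*$-subalgebra of $\bigoplus_s \R_s$ generated by the union of the images of these diagonal-style maps $j_\alpha := (\rho_{\alpha,s})_s$. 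The $j_\alpha$ are the coproduct injections by construction.

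The final and principal step is verifying the universal property, and this is where the main obstacle lies. Given an arbitrary family $\{\rho_\alpha : \M_\alpha \To \R\}$ into an arbitrary $W^*$-algebra $\R$, I would first replace $\R$ by the $W^*$-subalgebra generated by the images, which is then generated by $\kappa$ elements and so embeds in $\B(\H)$; this exhibits the family as (isomorphic to) one of the $\R_s$ in our indexing set. Composing the projection $\bigoplus_s \R_s \To \R_s$ with the generated inclusion, and restricting to $\Ast_\alpha \M_\alpha$, produces a morphism $\rho : \Ast_\alpha \M_\alpha \To \R$ with $\rho \circ j_\alpha = \rho_\alpha$. Uniqueness is immediate since the images of the $j_\alpha$ generate $\Ast_\alpha \M_\alpha$ as a $W^*$-algebra, and normal $*$-homomorphisms agreeing on a generating set agree everywhere. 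The delicate point to handle carefully is the reduction of an arbitrary target to one sitting inside the fixed $\B(\H)$: one must check that the restriction of a normal $*$-homomorphism to the generated $W^*$-subalgebra is again normal with the correct codomain, and that passing through the faithful representation furnished by Lemma~\ref{TheCategory.C} does not disturb the factorization. This normality and well-definedness bookkeeping, rather than any single hard estimate, is the crux of the argument.
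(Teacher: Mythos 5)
Your proposal is correct and follows essentially the same route as the paper: both use Lemma \ref{TheCategory.C} to cut the proper class of generating cocones down to a set (the paper indexes isomorphism classes of cocones, you realize them all inside a fixed $\B(\H)$, which amounts to the same thing), then take the $W^*$-subalgebra of the direct sum generated by the diagonal images and verify the universal property by passing to the subalgebra of the target generated by the images. The normality bookkeeping you flag at the end is indeed routine and is treated as such in the paper.
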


\begin{proof}
Let $\{\M_\alpha\}$ be a family of $W^*$-algebras. Let's define a generating cocone from $\{\M_\alpha\}$ to be a family of morphisms $s= \{\iota_\alpha^s: \M_\alpha \To \N_s\}$ into some $W^*$-algebra $\N_s$ that's generated by $\Union_\alpha \iota^s_\alpha(\M_\alpha)$. By Lemma \ref{TheCategory.C} above, we can form the set $S$ of all generating cocones from $\{\M_\alpha\}$, up to the obvious notion of isomorphism. For each $\alpha$, define $\iota_\alpha: \M_\alpha \To \Dsum_{s \in S } \N_s$ by $m_\alpha \mapsto \Dsum_s \iota^s_\alpha(m_\alpha)$, and define $\M$ to be the subalgebra of $ \Dsum_s \N_s$ generated by $\Union_\alpha \iota_\alpha(\M_\alpha)$.

Let $\ \R$ be a $W^*$-algebra, and $\{\rho_\alpha: \M_\alpha \To   \R\}$ a family of morphisms. Let $\N$ be the $W^*$-subalgebra of $\R$ generated by $\Union_\alpha \rho_\alpha(\M_\alpha)$, so that generating cocone $t= \{\rho_\alpha: \M_\alpha \To \N\} $ is an element of $S$. If $\pi_t: \bigoplus \N_s \To \N_t = \N$ is projection onto the $t$ summand, then $\pi_t \circ \iota_\alpha = \iota_\alpha^t = \rho_\alpha$ for all $\alpha$ as desired. Furthermore if $\pi: \M \To \R$ is another morphism such that  $\pi \circ \iota_\alpha  = \rho_\alpha$ for all $\alpha$, then $\pi = \pi_t$ because $\M$ is generated by the images of the $\iota_\alpha$. Thus, $\M$ together with the morphisms $\iota_\alpha: \M_\alpha \To \M$ is the coproduct of $\{\M_\alpha\}$.
\end{proof}

\begin{definition}
The coproduct of a family $\{\M_\alpha\}$ of $W^*$-algebras is their $\emph{free product}$, and we will denote it by $
\Ast_\alpha \M_\alpha$.
\end{definition}

\begin{proposition}
The category $\Wstar$ has all small colimits.
\end{proposition}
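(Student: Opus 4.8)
The plan is to reduce the existence of all small colimits to the existence of all small coproducts and all coequalizers, invoking the standard fact that a category with these two classes of colimits has all small colimits (the dual of the result used in the preceding proposition, where limits were reduced to products and equalizers).

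First I would observe that since $\Wstar$ already has all small coproducts, by the dual of \cite{MacLane71} it suffices to construct coequalizers. So let $\pi_0, \pi_1: \M \To \N$ be a parallel pair of morphisms; I need a $W^*$-algebra $\N_\sim$ and a morphism $q: \N \To \N_\sim$ with $q \circ \pi_0 = q \circ \pi_1$, universal among such morphisms. Dualizing the equalizer construction, the coequalizer should be the largest quotient of $\N$ on which $\pi_0$ and $\pi_1$ agree. Concretely, I would consider the set of all morphisms $\rho: \N \To \R$ satisfying $\rho \circ \pi_0 = \rho \circ \pi_1$ whose image generates $\R$; as in the coproduct proof, Lemma \ref{TheCategory.C} bounds the cardinality of such $\R$, so these generating quotients form a set $T$ up to isomorphism. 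I would then take $q$ to be the corestriction of $\N \To \Dsum_{t \in T} \R_t$, $n \mapsto \Dsum_t \rho_t(n)$, onto the $W^*$-subalgebra $\N_\sim$ it generates.

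The verification of the universal property then parallels the coproduct argument exactly: given any $\rho: \N \To \R$ with $\rho \circ \pi_0 = \rho \circ \pi_1$, restrict to the subalgebra $\N'$ generated by $\rho(\N)$ to obtain an element $t \in T$, and the projection $\pi_t: \Dsum_s \R_s \To \R_t = \N'$ composed with $q$ recovers $\rho$; uniqueness holds because $\N_\sim$ is generated by the image of $q$. This gives coequalizers, and combined with coproducts yields all small colimits.

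The main obstacle I anticipate is the same subtlety that the author sidesteps in the coproduct proof, namely ensuring that the relevant collection of generating quotients is genuinely a \emph{set} and not a proper class; this is exactly what Lemma \ref{TheCategory.C} is designed to handle, by bounding the Hilbert space dimension, and hence the cardinality, of any $W^*$-algebra generated by a fixed number of elements. I expect the author may instead give an even shorter proof by directly constructing arbitrary colimits as suitable generated subalgebras of direct sums indexed by all generating cocones compatible with the diagram, bypassing the explicit coequalizer step; either route rests on the same cardinality bound. The remaining checks, that $\N_\sim$ is a $W^*$-algebra and that $q$ is a normal $*$-homomorphism, are routine given that subalgebras generated by images under morphisms of the direct sum are again $W^*$-algebras.
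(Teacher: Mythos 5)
Your proof is correct, but it takes a genuinely different route from the paper's at the coequalizer step. You both reduce small colimits to coproducts plus coequalizers, but where you construct the coequalizer of $\pi_0, \pi_1: \M \To \N$ by a solution-set argument --- collecting all generating quotients $\rho: \N \To \R$ with $\rho \circ \pi_0 = \rho \circ \pi_1$ into a set via the cardinality bound of Lemma \ref{TheCategory.C}, and corestricting the diagonal map into their direct sum --- the paper constructs it directly: it takes the $w^*$-closed two-sided ideal $\I \subsetof \N$ generated by $\{\pi_1(m) - \pi_0(m)\}$ and defines the coequalizer to be the quotient morphism $\N \To \N/\I$, with universality following because the kernel of any coequalizing morphism must contain $\I$. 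The paper's argument is shorter and identifies the coequalizer concretely (a quotient by a $w^*$-closed ideal, hence a corner $(1-z)\N$ for a central projection $z$), at the cost of invoking the structure theory of such ideals; your argument is more generic, reusing the machinery of the coproduct proof and requiring only that images of morphisms generate $W^*$-subalgebras of bounded size, so it would transfer to other algebraic categories where an explicit description of quotients is unavailable. Both are sound; your anticipated obstacle (that the generating quotients form a set) is indeed handled by Lemma \ref{TheCategory.C} exactly as you say.
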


\begin{proof}
Since $\Wstar$ has all small coproducts, it's sufficient to show that $\Wstar$ has coequalizers \cite{MacLane71}. Therefore, suppose that $\rho_0, \rho_1: \M \To \N$ is a pair of morphisms, and let  $\I \subsetof \N$ be the $w^*$-closed two-sided ideal generated by $\{\rho_1(m) - \rho_0(m) \suchthat m \in \M\}$. Define $\pi: \N \To \N_=$ to be the quotient morphism by the ideal $\I$. Clearly, $\pi \circ \rho_0 = \pi \circ \rho_1$, and since the kernel of any morphism $\tilde \pi: \N \to \Tilde \N$ coequalizing $\rho_0$ and $\rho_1$ must contain $\I$, such a map must factor uniquely through $\pi$.
\end{proof}

In analogy with the category $\Set$, we call $\o(\M \oplus \N)$ the disjoint union of $\o \M$ and $\o \N$. Likewise, we call $\o(\M \ast \N)$ the collection of all possible pairs from $\o \M$ and $\o \N$. It's significant that the latter operation does not distribute over the former:

\begin{theorem}\label{TheCategory.H}
The category $\Wstar$ fails to have all coexponentials.
\end{theorem}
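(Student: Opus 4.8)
The plan is to recognize that $\Wstar$ has all coexponentials precisely when, for every $W^*$-algebra $\N$, the functor $-\ast\N:\Wstar\To\Wstar$ admits a left adjoint. A coexponential in $\Wstar$ is by definition an exponential in $\o\Wstar$, and the exponential $(-)^{\o\N}$ in $\o\Wstar$ is a right adjoint to the product functor $-\times\o\N$ of $\o\Wstar$. Reversing all arrows, the product of $\o\Wstar$ is the coproduct $\ast$ of $\Wstar$, and a right adjoint in $\o\Wstar$ is a left adjoint in $\Wstar$; so the coexponential functor, if it exists, is exactly a left adjoint to $-\ast\N$. Since any functor possessing a left adjoint is itself a right adjoint, it must preserve all small limits, and in particular all products, i.e.\ all direct sums.

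It therefore suffices to exhibit a single $\N$ for which $-\ast\N$ fails to preserve a direct sum. I would take $\N=\CC^2$ and the two-fold product $\CC\oplus\CC$, and compare the two candidate values. On one side, $(\CC\oplus\CC)\ast\CC^2=\CC^2\ast\CC^2$. On the other side, I would first note that $\CC$ is the initial object of $\Wstar$ (there is a unique unital normal $*$-homomorphism $\CC\To\R$ for every $\R$), hence the unit for the coproduct, so $\CC\ast\CC^2\iso\CC^2$; consequently $(\CC\ast\CC^2)\oplus(\CC\ast\CC^2)\iso\CC^2\oplus\CC^2=\CC^4$.

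The final step is to observe that these two $W^*$-algebras are not isomorphic. The algebra $\CC^4$ is commutative, whereas $\CC^2\ast\CC^2$ is not: by the universal property of the coproduct, any pair of projections in a $W^*$-algebra $\R$ determines a morphism $\CC^2\ast\CC^2\To\R$, and choosing two non-commuting projections in $M_2(\CC)$—which generate all of $M_2(\CC)$—produces a morphism onto $M_2(\CC)$. Thus $\CC^2\ast\CC^2$ has a noncommutative quotient and is itself noncommutative. Since $-\ast\CC^2$ does not preserve the product $\CC\oplus\CC$, it has no left adjoint, so the coexponential indexed by $\CC^2$ does not exist, and $\Wstar$ fails to have all coexponentials.

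I expect the only genuinely delicate point to be the bookkeeping of the first paragraph: correctly dualizing the exponential adjunction to identify the coexponential with a left adjoint of $-\ast\N$, and thereby extracting the clean obstruction that such a functor must preserve direct sums. Once that reduction is in place, the counterexample is essentially immediate, the one substantive input being that the categorical free product $\CC^2\ast\CC^2$ is genuinely ``free''—its morphisms to any target are unconstrained pairs of projections—which is exactly the content of the coproduct universal property established above.
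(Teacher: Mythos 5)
Your proof is correct and follows essentially the same route as the paper: reduce to the observation that a coexponential functor would force $-\ast\CC^2$ to preserve the product $\CC\oplus\CC$, and then note that $(\CC\oplus\CC)\ast\CC^2 = \CC^2\ast\CC^2$ is noncommutative (via a pair of noncommuting projections) while $(\CC\ast\CC^2)\oplus(\CC\ast\CC^2)\iso\CC^4$ is commutative. The only cosmetic difference is that the paper exhibits the noncommuting projections in $\B(\ell^2)$ rather than $M_2(\CC)$.
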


\begin{proof}
Suppose that $\o \Wstar$ is Cartesian closed, i.e., for any $W^*$-algebra $\N$, the functor $- \ast \N: \Wstar \To \Wstar$ has a left adjoint. All right adjoints preserve limits, so in particular for all $W^*$-algebras $\M$,  $(\M \oplus \M) \ast \N \iso (\M \ast \N) \oplus (\M \ast \N)$, but this formula is false for $\M = \CC^1$ and $\N = \CC^2$. Indeed, $\CC^1 \ast \CC^2 = \CC^2$ so the right side is commutative, but $\CC^2 \ast \CC^2$ is noncommutative since, for example, $\B(\ell^2)$ has noncommuting pairs of projections.
\end{proof}

\section{The Categorical Tensor Product}\label{Categorical}

It's easy to show that $\CC^2 \ast \CC^2$ has exactly $2^{\aleph_0}$ irreducible representations up to unitary equivalence. The collection $\o \CC^2 \ast \CC^2 $ of all pairs from $\{0,1\}$ is somewhat larger than expected! We excuse this phenomenon by  saying that although the first component of each element of $\o \CC^2 \ast \CC^2$ is indeed either $0$ or $1$, and likewise the second component, the various pairs may be structurally different. That pairs may be ``structurally different'' is both counterintuitive and desirable. For example, this interpretation permits us to say that the phase space of a quantum particle in one dimension consists of pairs of real numbers, just not pairs that can be modeled via the usual set-theoretic construction.

We interpret Theorem \ref{TheCategory.H} above as showing that we can't apply the usual definition of exponentials to obtain a collection of all functions from one collection to another, if we insist that any function should be applicable to any argument regardless of the structure of this pair. We will therefore ask that the evaluation function be defined only on pairs of a certain kind.

In this section, we focus on the collection $\o \M \tilde \tensor \N$ of pairs for which observables on the first component are compatible with the observables on the second component. The usual criterion for compatible observables is that they correspond to commuting self-adjoint operators. Therefore, in this section, we study the quotient $\M \tilde \tensor \N$ of $\M \ast \N$ by the ideal of commutators. We will later abandon this criterion.

\begin{proposition}\label{Categorical.A}
Let $\{\M_\alpha\}$ be an indexed family of $W^*$-algebras. There is a space $\M$ and a family of morphisms $\{\iota_\alpha: \M_\alpha \To \M\}$ such that $\iota_\alpha(m_\alpha)$ commutes with $\iota_\beta(m_\beta)$ whenever $m_\alpha \in \M_\alpha$, $m_\beta \in \M_\beta$, and $\alpha \neq \beta$, and if $\{\rho_\alpha: \M_\alpha \To \N\}$ is another family with this same property, then there exists a unique  morphism $\M \to \N$ such that $\rho \circ \iota_\alpha = \rho_\alpha$ for all $\alpha$.
\end{proposition}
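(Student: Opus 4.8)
The plan is to construct $\M$ as a quotient of the free product $\Ast_\alpha \M_\alpha$ (whose existence is guaranteed by the coproduct proposition) by the smallest $w^*$-closed two-sided ideal that forces the desired commutativity across distinct factors. Concretely, let $j_\alpha: \M_\alpha \To \Ast_\beta \M_\beta$ be the canonical coproduct injections, and let $\I$ be the $w^*$-closed two-sided ideal generated by all commutators $[j_\alpha(m_\alpha), j_\beta(m_\beta)]$ with $\alpha \neq \beta$, $m_\alpha \in \M_\alpha$, $m_\beta \in \M_\beta$. Set $\M = (\Ast_\beta \M_\beta)/\I$ with quotient morphism $q$, and define $\iota_\alpha = q \circ j_\alpha$. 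By construction $\iota_\alpha(m_\alpha)$ and $\iota_\beta(m_\beta)$ commute when $\alpha \neq \beta$, since their commutator is the image under $q$ of a generator of $\I$.

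For the universal property, suppose $\{\rho_\alpha: \M_\alpha \To \N\}$ is another family whose images commute across distinct indices. The coproduct property of $\Ast_\beta \M_\beta$ yields a unique morphism $\tilde\rho: \Ast_\beta \M_\beta \To \N$ with $\tilde\rho \circ j_\alpha = \rho_\alpha$. I would then check that $\tilde\rho$ annihilates each generating commutator: indeed $\tilde\rho([j_\alpha(m_\alpha), j_\beta(m_\beta)]) = [\rho_\alpha(m_\alpha), \rho_\beta(m_\beta)] = 0$ by hypothesis. Since $\tilde\rho$ is a normal $*$-homomorphism, its kernel is a $w^*$-closed two-sided ideal; as it contains every generator of $\I$, it contains $\I$. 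Hence $\tilde\rho$ factors as $\tilde\rho = \rho \circ q$ for a unique morphism $\rho: \M \To \N$, and then $\rho \circ \iota_\alpha = \rho \circ q \circ j_\alpha = \tilde\rho \circ j_\alpha = \rho_\alpha$. Uniqueness of $\rho$ follows because $q$ is surjective (so epic), so $\rho$ is determined on the generators $\iota_\alpha(\M_\alpha)$, which generate $\M$.

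The only genuinely delicate point is making sure every ingredient stays inside the category $\Wstar$, i.e. that all the relevant objects are $W^*$-algebras and all maps are unital normal $*$-homomorphisms. The free product $\Ast_\beta \M_\beta$ is a $W^*$-algebra by the coproduct proposition, and the quotient of a $W^*$-algebra by a $w^*$-closed two-sided ideal is again a $W^*$-algebra with the quotient map normal — this is exactly the mechanism already used in the construction of coequalizers earlier in this section, so I would simply invoke it. The one subtlety worth flagging is that I must generate $\I$ as a $w^*$-closed ideal rather than merely a norm-closed or algebraic one; the algebraic commutator ideal need not be $w^*$-closed, but taking the $w^*$-closure does not disturb the argument, since the kernel of any normal homomorphism is automatically $w^*$-closed and therefore still swallows the full closed ideal.

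I expect no serious obstacle here, as this is the standard ``coproduct modulo relations'' construction and the relation being imposed (commutativity across factors) is exactly the type whose vanishing is preserved by homomorphisms. The main thing to be careful about is invoking the right form of the quotient result: that passing to a quotient by a $w^*$-closed two-sided ideal yields a morphism in $\Wstar$, which the coequalizer proof has already established.
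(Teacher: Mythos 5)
Your proposal is correct and follows essentially the same route as the paper: form the free product $\Ast_\alpha \M_\alpha$, quotient by the $w^*$-closed two-sided ideal generated by the cross-factor commutators, and deduce the universal property from the coproduct property together with the fact that the kernel of a normal $*$-homomorphism is a $w^*$-closed ideal. Your version spells out the factorization and uniqueness steps in somewhat more detail than the paper does, but the construction and the key observations are identical.
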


\begin{proof}
Let $\Ast_\alpha \M_\alpha$ be the free product of the family $\{\M_\alpha\}$ with inclusions $\iota_\beta: \M_\beta \To \Ast_\alpha \M_\alpha  $, and let $\I$ be $w^*$-closed ideal generated by commutators of the form $[\iota_\alpha(m_\alpha), \iota_\beta(m_\beta)]$ for $m_\alpha \in \M_\alpha$, $m_\beta \in \M_\beta$, and $\alpha \neq \beta$. Each family $\{\rho_\alpha: \M_\alpha \To \N\}$ with the relevant property factors through $\Ast_\alpha \M_\alpha$ by the universal property of the free product, and therefore also throught $\M =(\Ast_\alpha \M_\alpha)/\I$. That it does so uniquely follows from the fact that operators of the form $\iota_\alpha(m_\alpha)$ generate $\Ast_\alpha \M_\alpha$, and therefore also $\M$.
\end{proof}

\begin{definition}
If $\{\M_\alpha\}$ is an indexed family of $W^*$-algebras, then the universal property of Proposition \ref{Categorical.A} implies that the $W^*$-algebra $\M$ is unique up to a canonical isomorphism. Following Dauns \cite{Dauns72}, we call $\M$ the categorical tensor product of the family $\{\M_\alpha\}$; we notate it $\M = \tilde \Tensor_\alpha \M_\alpha$. 
\end{definition}

\begin{lemma}\label{Categorical.C}
Let $\TT$ be the unit circle, let $\lambda$ be Lebesgue measure on $\TT$, and let $w \in \TT$ be irrational. Define $\rho_w: L^\infty(\TT,\lambda) \To L^\infty(\TT, \lambda)$ to be rotation by $w$, i.e., $(\rho_w(f))(z) = f(w\inv z)$. Then the inclusion $\iota: \CC \To L^\infty(\TT, \lambda)$ is the equalizer of $\rho_w$ with the identity morphism $\rho_1$ on $L^\infty(\TT,\lambda)$.
\end{lemma}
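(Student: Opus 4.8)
The plan is to combine the explicit description of equalizers in $\Wstar$ obtained above with the classical ergodicity of irrational rotation, the latter verified by a short Fourier computation. By the construction of equalizers, the equalizer of $\rho_1$ and $\rho_w$ is the $W^*$-subalgebra
$$
\M_= = \{f \in L^\infty(\TT, \lambda) \suchthat \rho_w(f) = f\}
$$
together with its inclusion into $L^\infty(\TT, \lambda)$. The morphism $\iota: \CC \To L^\infty(\TT,\lambda)$ carries $\CC$ isomorphically onto the subalgebra $\CC \cdot 1$ of constant functions. Since equalizers are unique up to canonical isomorphism, it suffices to show that every $\rho_w$-invariant element of $L^\infty(\TT,\lambda)$ is constant almost everywhere, i.e.\ that $\M_= = \CC \cdot 1$.

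To prove this I would pass to $L^2$. Because $\lambda$ is finite, $L^\infty(\TT,\lambda) \subseteq L^2(\TT,\lambda)$, and a function fixed by $\rho_w$ as an element of $L^\infty$ is a fortiori fixed as an element of $L^2$. Expanding such an $f$ in the orthonormal basis $\{z^n\}_{n \in \ZZ}$ as $f = \sum_n c_n z^n$, the substitution $\rho_w(z^n)(z) = (w\inv z)^n = w^{-n} z^n$ gives $\rho_w(f) = \sum_n c_n w^{-n} z^n$. Comparing Fourier coefficients, the invariance $\rho_w(f) = f$ is equivalent to $c_n(w^{-n}-1) = 0$ for every $n \in \ZZ$.

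The hypothesis that $w$ is irrational enters precisely here: $w$ is not a root of unity, so $w^{-n} \neq 1$ for all $n \neq 0$, whence $c_n = 0$ for $n \neq 0$ and $f = c_0$ is constant. This yields $\M_= \subseteq \CC \cdot 1$; the reverse inclusion is immediate, completing the identification. I anticipate no real obstacle. The only points meriting care are the passage to $L^2$, legitimate since $\lambda(\TT) < \infty$, and the fact that invariance of the $L^\infty$-class genuinely equates all the Fourier coefficients rather than merely matching them off a null set — but this is automatic, since the coefficients depend only on the $L^2$-class.
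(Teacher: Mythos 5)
Your argument is correct, and it reaches the same pivotal fact as the paper --- that the fixed-point algebra $\{f \in L^\infty(\TT,\lambda) \suchthat \rho_w(f) = f\}$ is $\CC\cdot 1$, which by the paper's earlier construction of equalizers in $\Wstar$ is exactly what is needed --- but by a genuinely different route. You prove ergodicity of the irrational rotation spectrally: embed $L^\infty(\TT,\lambda)$ into $L^2(\TT,\lambda)$, diagonalize the rotation on the character basis $\{z^n\}$, and use that $w$ is not a root of unity to kill every Fourier coefficient $c_n$ with $n \neq 0$. The paper instead works purely measure-theoretically and only with projections: for an invariant measurable set $A$ it shows $\lambda(A \intersect I) = \lambda(A)\lambda(I)$ for all intervals $I$ (using density of $\{w^n\}$ in $\TT$), and then invokes the Lebesgue Differentiation Theorem to force $\lambda(A) \in \{0, \lambda(\TT)\}$; since a commutative $W^*$-algebra with only trivial projections is $\CC$, the conclusion follows. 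Your Fourier argument is shorter, handles arbitrary elements rather than just projections, and isolates exactly where irrationality enters ($w^{-n} \neq 1$ for $n \neq 0$); the paper's argument avoids $L^2$ Fourier theory at the cost of the differentiation theorem and the reduction to projections. Your two cautionary remarks at the end (finiteness of $\lambda$, and the fact that Fourier coefficients only see the $L^2$-class) are exactly the right ones, and both are handled correctly.
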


\begin{proof}
We show that for all projections $q\in L^\infty(\TT, \lambda)$, if $ \rho_{w}(q) =q $, then $q = 0$ or $q=1$. 

Suppose that $q$ is a projection in $L^\infty(\TT, \lambda)$ such that $\rho_{w}(q) = q$, and that $A$ is a corresponding measurable subset of $\TT$. For all intervals $I \subsetof \TT$ and naturals $n$, $\lambda(A \intersect I) = \lambda(w^nA \intersect w^n I) = \lambda(A \intersect w^nI)$. Since the powers of $w$ are dense in $\TT$, it follows that $\lambda (A \intersect I ) = \lambda (A \intersect zI )$ for all intervals $I \subsetof \TT$ and $z \in \TT$. We conclude that if $\lambda(I) = n \inv \lambda (\TT)$, then $\lambda(A \intersect I) = n \inv \lambda(A)$. It follows by the countable additivity of $\lambda$ that $\lambda(A \intersect I) = \lambda(A) \lambda (I)$.

The Lebesgue Differentiation Theorem implies that if $\lambda(A) < \lambda(\TT)$, then for every $\epsilon >0$, there is an interval $I\subsetof \TT$ such that $\lambda(A \intersect I) < \epsilon \lambda(I)$. If $q\neq 1$, then $\lambda (A) \lambda(I) = \lambda (A \intersect I)  < \epsilon \lambda (I)$ for all $\epsilon >0$, so $\lambda (A) = 0$, i.e., $q=0$.

In other words, the $W^*$-algebra $\{m \in L^\infty(\TT, \lambda)\suchthat \rho_w(m) = \rho_1(m)\}$ has $0$ and $1$ as its only projections, and therefore is equal to $\CC$. It follows that the image of any morphism $\pi: \M \To \L^\infty(\TT, \lambda)$ such that $\rho_w \circ \pi = \rho_1 \circ \pi$ is in $\CC \subset L^\infty(\TT, \lambda)$, and so factors uniquely through that inclusion.
\end{proof}

\begin{theorem}\label{Categorical.D}
Let $\TT$ be the unit circle, and let $\lambda$ be Lebesgue measure on $\TT$. The functor $- \tilde \tensor L^\infty(\TT,\lambda)  : \Wstar \to \Wstar$ does not preserve limits.
\end{theorem}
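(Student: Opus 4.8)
The plan is to exploit the equalizer furnished by Lemma \ref{Categorical.C}, since equalizers are limits and a functor preserving all limits preserves them. Write $F = -\tilde\tensor L^\infty(\TT,\lambda)$ and abbreviate $A = L^\infty(\TT,\lambda)$. If $F$ preserved limits it would carry the equalizer $\iota: \CC \To A$ of $\rho_w$ and $\rho_1 = \mathrm{id}$ to the equalizer of $F(\rho_w)$ and $F(\rho_1)$. I would first identify the three pieces of the resulting diagram. Because $\CC$ is the unit for the free product and its image under a tensor factor is central, $\CC \tilde\tensor A \iso A$, so $F(\CC) \iso A$ and $F(\iota)$ is the inclusion $j_B: A \To A\tilde\tensor A$ of the second tensor factor. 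Writing $U = j_A(u)$ and $V = j_B(u)$ for the images of the coordinate function $u(z)=z$, the algebra $A\tilde\tensor A$ is the commutative $W^*$-algebra generated by the two commuting unitaries $U,V$, and $F(\rho_w) = \rho_w\tilde\tensor 1 =: \theta$ is the automorphism with $\theta(U)=\bar w U$ and $\theta(V)=V$. Computing the equalizer of $(\theta,\mathrm{id})$ as in the construction of equalizers in $\Wstar$, preservation is equivalent to the equality of the fixed algebra $(A\tilde\tensor A)^\theta$ with the second factor $j_B(A) = W^*(V)$. Since $W^*(V) \subseteq (A\tilde\tensor A)^\theta$ always holds, it remains to show this inclusion is strict.

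To produce a fixed element outside $W^*(V)$ I would use that the categorical tensor product is strictly larger than the spatial one, $A\tilde\tensor A \neq A \stensor A = L^\infty(\TT^2)$. Besides the canonical surjection $\pi: A\tilde\tensor A \To L^\infty(\TT^2)$ sending $U,V$ to the coordinate multiplications $z_1,z_2$, there is a \emph{diagonal} quotient $d: A\tilde\tensor A \To A$ induced via Proposition \ref{Categorical.A} by the commuting pair $(\mathrm{id},\mathrm{id})$, so that $d\circ j_A = d\circ j_B = \mathrm{id}$; this $d$ does not factor through $\pi$, reflecting that the diagonal of $\TT^2$ is Lebesgue-null. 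Let $q$ be the central projection with $\ker d = (A\tilde\tensor A)(1-q)$, so that $d(q)=1$, and set $P = \bigvee_{n\in\ZZ}\theta^n(q)$. Then $P$ is $\theta$-invariant by construction. Since $P\geq q$ and $d(q)=1$, monotonicity gives $d(P)=1$; if $P = j_B(e)$ for a projection $e\in A$, then $e = d(j_B(e)) = d(P) = 1$, forcing $P=1$. On the other hand $\theta$ descends along $\pi$ to the first-coordinate rotation $\bar\theta$ of $L^\infty(\TT^2)$, and $\pi(q)=0$, so $\pi(P) = \bigvee_n \bar\theta^n(\pi(q)) = 0 \neq 1$. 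Hence $P\neq 1$, so $P \notin W^*(V)$, and the inclusion is strict.

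The main obstacle is establishing the two facts that genuinely distinguish the categorical tensor product from the spatial one: that the diagonal surjection $d$ exists as a quotient of $A\tilde\tensor A$ (equivalently $q\neq 0$), and that its support is annihilated by the spatial quotient, $\pi(q)=0$. I would handle these by passing to representations: a representation of $A\tilde\tensor A$ is precisely a pair of commuting spectral measures on $\TT$, i.e. a single spectral measure on $\TT^2$, and $d$ and $\pi$ arise respectively from the diagonal measure $\delta$ and the product measure $\lambda\times\lambda$. Mutual singularity $\delta \perp (\lambda\times\lambda)$ then yields orthogonality of the associated central supports, giving $\pi(q)=0$, while $q\neq 0$ simply because $d$ is unital. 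The remaining verifications are routine: that $\theta$ is a well-defined automorphism descending to $\bar\theta$, and that $\bar\theta$ is ergodic, so its fixed functions in $L^\infty(\TT^2)$ depend only on $z_2$ — which is exactly why the spatial picture alone would have (misleadingly) suggested preservation. The conceptual heart of the argument is thus the comparison of the categorical and spatial tensor products through singular joint distributions.
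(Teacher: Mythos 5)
Your proposal is correct and takes essentially the same route as the paper: both reduce to the equalizer of Lemma \ref{Categorical.C}, introduce the spatial quotient $\pi$ and the diagonal quotient $\delta$ (your $d$), produce a projection annihilated by $\pi$ but sent to $1$ by $\delta$, and saturate it under the rotation to obtain a fixed projection outside the image of $\iota \tilde\tensor 1$. The only local difference is that the paper builds that projection explicitly as the decreasing limit of the dyadic sums $\sum_{p \in \P_n} p \tilde\tensor p$, whereas you take the support projection of $d$ and invoke mutual singularity of the diagonal and product measures; both work.
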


\begin{proof}
In light of Lemma \ref{Categorical.C} above, it's sufficient to show that the inclusion $\iota \tilde \tensor 1: L^\infty(\TT, \lambda) \To L^\infty(\TT, \lambda) \tilde \tensor L^\infty(\TT, \lambda)$ defined by $m \mapsto 1 \tilde \tensor m$ is not the equalizer of $\rho_1 \tilde \tensor 1,  \rho_{w} \tilde \tensor 1: L^\infty(\TT, \lambda) \tilde \tensor L^\infty(\TT, \lambda) \To L^\infty(\TT, \lambda) \tilde \tensor L^\infty(\TT, \lambda)$, where as before $w \in \TT$ is irrational, and for all $z \in \TT$, $\rho_z$ is rotation by $z$. Applying the universal property of the categorical tensor product, define $\pi: L^\infty(\TT,\lambda) \tilde \tensor L^\infty(\TT,\lambda) \To L^\infty(\TT \times \TT, \lambda \times \lambda)$ by $\pi(m_1   \tilde \tensor m_2)(z_1,z_2) = m_1(z_1) m_2(z_2)$. Similarly,  define $\delta : L^\infty(\TT,\lambda) \tilde \tensor L^\infty(\TT,\lambda) \To L^\infty(\TT,\lambda)$ by $\delta(m_1 \tilde \tensor m_2) = m_1 m_2$. 

Every nonzero projection in $L^\infty(\TT, \lambda)$ is the sum of two orthogonal projections of equal measure. Starting from the identity $1 \in L^\infty(\TT, \lambda)$, and continuing in this way, we obtain for each natural $k$ a family $\P_n$ of $2^n$ orthogonal projections of equal measure such that each family sums to the identity, and such that every projection in $\P_{n+1}$ is a subprojection of a projection in $\P_n$. We can now define a decreasing sequence of projections $q_n = \sum_{p \in \P_n}p \tilde \tensor p$, with $w^*$-limit $q \in L^\infty(\TT,\lambda) \tilde \tensor L^\infty(\TT,\lambda)$. It follows by the continuity of $\delta$, that $\delta (q) = 1$. On the other hand, $(\lambda \times \lambda)(\pi(q_n)) = 2^{-n}\lambda(\TT)^2$, so $(\lambda \times \lambda)(\pi(q)) =0$, i.e., $\pi(q)=0$. Since Lebesgue measure is rotationally invariant, we can generalize this argument to show that  $\pi((\rho_{w^k} \tilde \tensor 1)(q))=0$ for all integers $k$.

Let $\tilde p = \bigvee_k  (\rho_{w^k} \tilde \tensor 1)(q)$, and suppose that $\tilde p = 1 \tilde \tensor p$ for some projection $p \in L^\infty(\TT,\lambda)$. Because $\pi$ vanishes on each term $(\rho_{w^k} \tilde \tensor 1)(q)$, we see that $\pi(1 \tilde \tensor  p) = \pi(\tilde p) = 0$, so $p=0$. This conclusion contradicts $\delta(q)=1$, which implies that $q \neq 0$, so $\tilde p \neq 0$. Therefore, $\tilde p$ is not in the image of the inclusion $\iota \tilde \tensor 1: L^\infty(\TT, \lambda) \To L^\infty(\TT, \lambda) \tilde \tensor L^\infty(\TT, \lambda)$.

Let $\phi: \CC^2 \To L^\infty(\TT,\lambda)\tilde \tensor L^\infty(\TT,\lambda)$ be the morphism defined by $\phi(c_0, c_1) = c_0(1-\tilde p)+ c_1\tilde p$. Since by construction, $(\rho_{w} \tilde \tensor 1) (\tilde p) = \tilde p = (\rho_1 \tilde \tensor 1)(\tilde p)$, the morphism $\phi$ satisfies $(\rho_{w} \tilde \tensor 1)\circ \phi = (\rho_1\tilde  \tensor 1) \circ \phi$. However, such a morphism cannot factor through the inclusion $\iota \tilde \tensor 1$ because $\tilde p$ isn't in the image of $\iota \tilde \tensor 1$. We conclude that $\iota \tilde \tensor 1$ is not the equalizer of $\rho_1 \tilde \tensor 1,  \rho_{w} \tilde \tensor 1: L^\infty(\TT, \lambda) \tilde \tensor L^\infty(\TT, \lambda) \To L^\infty(\TT, \lambda) \tilde \tensor L^\infty(\TT, \lambda)$.
\end{proof}

\begin{corollary}
The functor $- \tilde \tensor L^\infty(\TT, \lambda): \Wstar \To \Wstar$ does not have a left adjoint.
\end{corollary}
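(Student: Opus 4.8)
The plan is to derive this immediately from Theorem \ref{Categorical.D} using the standard categorical principle that right adjoints preserve limits. The observation is that the property of having a left adjoint and the property of being a right adjoint are the same: to say that $F = -\tilde\tensor L^\infty(\TT,\lambda)$ has a left adjoint $L$ is precisely to say that $L \dashv F$, i.e., that $F$ occupies the right-hand slot of an adjunction. I would therefore argue by contradiction.

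First I would suppose, toward a contradiction, that the functor $-\tilde\tensor L^\infty(\TT,\lambda)$ admits a left adjoint. Then this functor is a right adjoint, and I would invoke the fact (recalled in Section 2, where it is noted that right adjoints preserve limits in the guise ``all right adjoints preserve limits'', cf.\ the proof of Theorem \ref{TheCategory.H}) that any right adjoint preserves all limits that exist in its domain. In particular, it must preserve equalizers.

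The contradiction is then supplied directly by Theorem \ref{Categorical.D}, which exhibits a specific equalizer diagram that $-\tilde\tensor L^\infty(\TT,\lambda)$ fails to preserve: the inclusion $\iota: \CC \To L^\infty(\TT,\lambda)$ is the equalizer of $\rho_1$ and $\rho_w$ by Lemma \ref{Categorical.C}, yet $\iota\tilde\tensor 1$ is not the equalizer of $\rho_1\tilde\tensor 1$ and $\rho_w\tilde\tensor 1$. A functor that preserved limits would carry the former equalizer to the latter, so the functor cannot preserve limits, and hence cannot be a right adjoint. This contradicts the assumption, completing the argument.

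There is essentially no obstacle here, since all the analytic work has already been absorbed into Theorem \ref{Categorical.D}; the only point requiring any care is simply the bookkeeping that ``having a left adjoint'' is exactly ``being a right adjoint,'' so that the limit-preservation theorem applies in the correct variance. The proof is thus a one-line formal consequence of the preceding theorem.
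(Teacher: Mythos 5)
Your proposal is correct and is exactly the argument the paper intends: the corollary is stated without proof precisely because it follows immediately from Theorem \ref{Categorical.D} via the fact that a functor with a left adjoint is a right adjoint and hence preserves limits. Nothing is missing.
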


\begin{proposition}\label{Categorical.G}
The limit in $\Wstar$ of commutative $W^*$-algebras is commutative.
\end{proposition}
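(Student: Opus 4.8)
The plan is to reduce the statement to the two elementary limit constructions and to observe that each of them visibly preserves commutativity. Since $\Wstar$ has all small limits, and any category equipped with all small products and all equalizers computes each limit as the equalizer of a suitable parallel pair of morphisms between two products \cite{MacLane71}, it suffices to check two things: that the product of commutative $W^*$-algebras is commutative, and that the equalizer of two morphisms whose common domain is commutative is again commutative.

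Both checks are immediate from the constructions already given. For products, recall that the product of $\{\M_\alpha\}$ is the direct sum $\bigoplus_\alpha \M_\alpha$, whose algebra operations are defined coordinatewise; hence if every $\M_\alpha$ is commutative, so is $\bigoplus_\alpha \M_\alpha$. For equalizers, recall that the equalizer of $\pi_0, \pi_1 : \M \To \N$ is realized as the $W^*$-subalgebra $\M_= = \{m \in \M \suchthat \pi_0(m) = \pi_1(m)\}$ of $\M$, and any $W^*$-subalgebra of a commutative $W^*$-algebra is itself commutative.

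To assemble these, let $D$ be a small diagram of commutative $W^*$-algebras indexed by a category $J$. Its limit is realized as the equalizer of a parallel pair of morphisms $\bigoplus_{j} D(j) \To \bigoplus_{u} \M_u$, where $j$ ranges over $\Ob J$, the index $u$ ranges over the morphisms of $J$, and $\M_u = D(k)$ whenever $u$ has codomain $k$. Both direct sums are products of commutative $W^*$-algebras, hence commutative by the first check, and the limit is a $W^*$-subalgebra of the left-hand direct sum, hence commutative by the second. There is essentially no obstacle here: the whole content is that coordinatewise direct sums and passage to subalgebras never leave the class of commutative algebras. The one point that warrants a glance is that the explicit equalizer-of-products construction exhibits the limit as a subalgebra of a product of the $D(j)$ \emph{themselves}, rather than of some auxiliary noncommutative $W^*$-algebra; this is exactly what the construction guarantees, so the argument goes through without further work.
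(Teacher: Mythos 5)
Your proof is correct, but it takes a different route from the paper's. You reduce the general limit to the two explicit constructions already given — the direct sum (coordinatewise operations, hence commutative when each factor is) and the equalizer (a $W^*$-subalgebra of the domain, hence commutative when the domain is) — and invoke the standard equalizer-of-products presentation of an arbitrary small limit. The one subtlety you flag, namely that this presentation realizes the limit inside a product of the $D(j)$ themselves and not inside some auxiliary algebra, is exactly the right thing to check, and it does hold. The paper argues instead directly from the universal property: since each leg $\pi_c: \M \To \NNN(c)$ of the limiting cone lands in a commutative algebra, it kills all commutators and so factors through the quotient $\rho: \M \To \M/\I$ by the commutator ideal; the resulting cone on $\M/\I$ then factors back through the limit, exhibiting $\rho$ as (split monic, hence) an isomorphism. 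The paper's argument is construction-independent — it is the general fact that a reflective subcategory (here, commutative $W^*$-algebras under abelianization) is closed under limits taken in the ambient category — while yours leans on the concrete models of products and equalizers built earlier in the same section, which makes it more elementary and arguably more transparent given what the paper has already set up. Both are complete.
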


\begin{proof}
Let $\NNN: \Cl \To \Wstar$ be a diagram of commutative $W^*$-algebras with limit $\M$ together with an indexed class of morphisms $\{\pi_c: \M \To \NNN(c) \suchthat c \in \Ob(\Cl)\}$. If $m_0, m_1 \in \M$, then $\pi_c(m_1 m_0 - m_1 m_0)=0$ for all objects $c$ of $\Cl$. We concluded that each morphism $\pi_c$ factors through the quotient map $\rho: \M \To \M/\I$, where $\I$ is the ideal generated by elements of the form $m_1 m_0 - m_0 m_1$ for all $m_0, m_1 \in \M$. Since, $\M$ is the limit of $\NNN$, it follows that $\rho$ is an isomorphism, so $\M$ is commutative.
\end{proof}

\begin{corollary}
The category $\cWstar$ of commutative $W^*$-algebras and unital normal $*$-homomorphisms fails to have all coexponentials.
\end{corollary}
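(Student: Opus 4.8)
The plan is to reduce the corollary to Theorem \ref{Categorical.D} by observing that the entire proof of that theorem already takes place inside $\cWstar$. First, though, I would pin down what ``coexponential'' means here. Coexponentials in $\cWstar$ are taken relative to its coproduct, and the coproduct of two commutative $W^*$-algebras $\M$ and $\N$ is their categorical tensor product $\M \tilde \tensor \N$: any cocone $\{\rho_\M, \rho_\N\}$ into a commutative $\R$ has commuting images, so Proposition \ref{Categorical.A} supplies a unique factorization through $\M \tilde \tensor \N$, which is itself commutative. Hence $\cWstar$ has all coexponentials if and only if, for every commutative $\N$, the functor $- \tilde \tensor \N : \cWstar \To \cWstar$ has a left adjoint, and to refute this it suffices to exhibit a single $\N$ with no such adjoint. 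The natural candidate is $\N = L^\infty(\TT, \lambda)$.

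Next I would record that limits in $\cWstar$ coincide with limits in $\Wstar$. By Proposition \ref{Categorical.G}, the limit in $\Wstar$ of a diagram of commutative $W^*$-algebras is commutative, so the full inclusion $\cWstar \To \Wstar$ creates limits. In particular, the equalizer in $\cWstar$ of the rotations $\rho_w$ and $\rho_1$ on $L^\infty(\TT, \lambda)$ is the same as in $\Wstar$, namely the inclusion $\iota : \CC \To L^\infty(\TT, \lambda)$ of Lemma \ref{Categorical.C}.

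The third step is to observe that every object and morphism appearing in the proof of Theorem \ref{Categorical.D} is commutative: the algebras $\CC^2$, $L^\infty(\TT, \lambda)$, and $L^\infty(\TT, \lambda) \tilde \tensor L^\infty(\TT, \lambda) \iso L^\infty(\TT \times \TT, \lambda \times \lambda)$ are all commutative, and the projection $\tilde p$ together with the morphism $\phi : \CC^2 \To L^\infty(\TT, \lambda) \tilde \tensor L^\infty(\TT, \lambda)$ live entirely within $\cWstar$. Therefore that proof already establishes, without change, that $- \tilde \tensor L^\infty(\TT, \lambda) : \cWstar \To \cWstar$ fails to send the equalizer $\iota$ to an equalizer; that is, this restricted functor does not preserve limits.

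The conclusion then follows from the standard fact that a right adjoint preserves all limits. Since $- \tilde \tensor L^\infty(\TT, \lambda)$ does not preserve the equalizer above, it cannot be a right adjoint, so it has no left adjoint, and $L^\infty(\TT, \lambda)$ has no coexponential in $\cWstar$. I expect the only real work to be the bookkeeping of the first two steps, namely confirming that the coproduct in $\cWstar$ is indeed $\tilde \tensor$ and that the relevant equalizer is unchanged on passing from $\Wstar$ to $\cWstar$; the analytic heart of the argument has already been carried out in Theorem \ref{Categorical.D}.
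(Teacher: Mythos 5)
Your proposal is correct and follows essentially the same route as the paper: transfer the equalizer of Lemma \ref{Categorical.C} to $\cWstar$ via Proposition \ref{Categorical.G}, note that the counterexample of Theorem \ref{Categorical.D} (in particular the witness $\phi: \CC^2 \To L^\infty(\TT,\lambda) \tilde\tensor L^\infty(\TT,\lambda)$) lives entirely among commutative algebras, and conclude that $- \tilde\tensor L^\infty(\TT,\lambda)$ has no left adjoint. Your explicit verification that the coproduct in $\cWstar$ is the categorical tensor product is a useful piece of bookkeeping the paper leaves implicit, but it does not change the argument.
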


\begin{proof}
By Proposition \ref{Categorical.G} above, a cone of commutative $W^*$-algebras is limiting in $\Wstar$ iff it is limiting in $\cWstar$. In particular, the equalizer $\iota: \CC \To  L^\infty(\TT, \lambda)$ of Lemma \ref{Categorical.C} is also an equalizer in $\cWstar$, but the inclusion $\iota \tilde \tensor 1_{L^\infty(\TT,\lambda)}$ from the proof of Theorem \ref{Categorical.D} is not an equalizer in $\cWstar$. Thus, the functor $ - \tilde \tensor L^\infty(\TT,\lambda) : \cWstar \To \cWstar$ fails to preserve limits in the category $\cWstar$, and so cannot have a left adjoint.
\end{proof}

 \section{The Spatial Tensor Product}\label{The Spatial Tensor Product}
 
We showed that the functors $- \ast \CC^2$ and $- \tilde \tensor L^\infty(\TT, \lambda)$ can't have left adjoints because they fail to preserve limits; see the proofs of Theorems \ref{TheCategory.H} and \ref{Categorical.D}. In contrast, we will show that the spatial tensor product with any given $W^*$-algebra does have a left adjoint; see Theorem \ref{Free.E}. This section defines the spatial tensor product.

Recall that we think of each quantum system $\o \M$ as a Hilbert space of quantum states modulo a group of symmetries. In quantum theory, the compound of two independent systems is represented on the tensor product of the Hilbert spaces of these constituent systems. We are thusly led to the usual tensor product of von Neumann algebras:

The spatial tensor product of von Neumann algebras $\M \subsetof \B(\H)$ and $\N \subsetof \B(\K)$ is defined to be the von Neumann algebra $\M \stensor \N  \subsetof \B(\H \tensor \K)$ generated by operators of the form $m \tensor n$, for $m \in \M$ and $n \in \N$. Remarkably, if we replace the $\M$ and $\N$ by isomorphs, then the resulting spatial tensor product will be isomorphic to the original. We thusly define the spatial tensor product of any two $W^*$-algebras.

The universal property of $\M \ast \N$ yields a monic function $\o \M \stensor \N \To  \o \M \ast \N$ that we interpret as inclusion, so that $\o \M \stensor \N$ does consist of pairs. We refer to these as \emph{Cartesian} pairs because they behave like the pairs of ordinary mathematical practice. Intuitively, two subsystems are independent if any configuration of the compound system is just a Cartesian pair of configurations, or, more practically, if a physicist would represent them on a tensor product Hilbert space.

If we wish $\stensor$ to be a functor $\Wstar \times \Wstar \To \Wstar$ in the usual sense, it's insufficient that the tensor product $\M \stensor \N$ of two $W^*$-algebras be defined up to isomorphism; for every pair of $W^*$-algebras, $\M$ and $\N$, we must choose a specific $W^*$-algebra to be called $\M \stensor \N$. This can be accomplished via the universal normal representations of $\M$ and $\N$. 

\begin{definition}
Let $- \stensor - : \Wstar \times \Wstar \To \Wstar$ be the functor defined as follows:
\begin{enumerate}
\item For every pair of $W^*$-algebras, $\M$ and $\N$, $\M \stensor \N$ is the $W^*$-subalgebra of $\B(\H \tensor \K)$ generated by operators of the form $m \tensor n$, for $m \in \M$ and $n \in \N$, where $\M \subsetof\B(\H)$ and $\N \subsetof \B(\K)$ are the universal normal representations of $\M$ and $\N$.
\item
For every pair of unital normal $*$-homomorphisms $\pi: \M_0 \to \M_1$ and $\rho: \N_0 \To \N_1$, $\pi \stensor \rho$ is defined by $ m_0 \tensor n_0 \mapsto \pi(m_0) \tensor \rho(n_0)$. Such a morphism may be constructed via a straightforward appeal to Stinespring's Theorem \ref{Normal.B}.
\end{enumerate}

\end{definition}

We could have made any other choice of representations in the above definition, and the resulting functor would've be equivalent. The naturality of such an equivalence is a coherence condition that allows us to handle morphisms between tensor product $W^*$-algebras without keeping track of which representations were used. If we wish to similarly identify $W^*$-algebras such as $(\M_0 \stensor \M_1) \stensor (\M_2 \stensor \CC)$ and $(\M_0 \stensor \M_2) \stensor \M_1$, we must establish four further coherence conditions, which together make $(\Wstar, \stensor)$ a \emph{symmetric monoidal category}.

\begin{proposition}
The category $\Wstar$ together with tensor product $\stensor$, the unit object $\CC$, and the obvious natural isomorphisms $a_{\M ,\N ,\R}:(\M \stensor \N) \stensor \R \iso \M \stensor (\N \stensor \R)$, $b_{\M,\N}:\M \stensor \N \iso \N \stensor \M$,  and $c_\M:\CC \stensor \M  \iso \M$, is a symmetric monoidal category, i.e., it satisfies the following coherence conditions:
\begin{enumerate}
\item Symmetry. $b_{\N, \M} \circ b_{\M, \N} = 1_{\M \stensor \N}$
\item Triangle Identity. $(1 \stensor c_{\N}) \circ a_{\M, \CC, \N} = (c_\M  \stensor 1_\N) \circ (b_{\M, \CC} \stensor 1_\N)$
\item Pentagon Identity. $$1_\M \stensor a _{\N, \R, \S} \circ a_{\M, \N \stensor \R, \S} \circ a_{\M, \N, \R} \stensor 1_\S = a_{\M, \N, \R \stensor \S} \circ a_{\M \stensor \N, \R ,\S}$$
\item Hexagon Identity.$$ (1_\N \stensor b_{\M, \R}) \circ a_{\N, \M, \R} \circ (b_{\M, \N} \stensor 1_\R) = a_{\N, \R, \M} \circ b_{\M, \N \stensor \R} \circ a_{\M, \N, \R} $$
\end{enumerate}
\end{proposition}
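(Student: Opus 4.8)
The plan is to verify each of the four coherence conditions on elementary tensors, reducing it to the corresponding identity for the ordinary tensor product of vector spaces. The key general fact is that two unital normal $*$-homomorphisms out of a $W^*$-algebra $\mathcal A$ that agree on a generating set must agree on all of $\mathcal A$, since the set on which they agree is a $w^*$-closed $*$-subalgebra containing the generators. Each structure map $a_{\M,\N,\R}$, $b_{\M,\N}$, $c_\M$, and hence each side of each coherence equation, is a morphism in $\Wstar$ whose domain is a spatial tensor product generated by elementary tensors; it therefore suffices to check each identity on such tensors.

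First I would make the structure isomorphisms concrete, using the representation-independence of the spatial tensor product noted above. Representing $\M$, $\N$, $\R$ faithfully and normally on $\H$, $\K$, $\L$, the von Neumann algebra generated on $\H \tensor (\K \tensor \L)$ by the operators $m \tensor (n \tensor r)$ is canonically isomorphic to $\M \stensor (\N \stensor \R)$, and, by conjugating with the associativity unitary $\H \tensor (\K \tensor \L) \iso (\H \tensor \K) \tensor \L$ of Hilbert spaces, also to $(\M \stensor \N) \stensor \R$; the associator $a_{\M,\N,\R}$ is the resulting composite, and it carries $(m \tensor n) \tensor r$ to $m \tensor (n \tensor r)$. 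Similarly, $b_{\M,\N}$ arises from the flip unitary $\H \tensor \K \iso \K \tensor \H$ and sends $m \tensor n$ to $n \tensor m$, while $c_\M$ arises from $\CC \tensor \H \iso \H$ and sends $z \tensor m$ to $zm$. Each of these is an isomorphism because conjugation by a unitary is a $*$-isomorphism carrying one generating set bijectively onto the other, and naturality in $\M$, $\N$, $\R$ follows by checking the naturality squares on elementary tensors.

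With the structure maps described on generators, the four conditions become routine bookkeeping. Symmetry asks that the double flip fix $m \tensor n$, which is clear; the triangle identity compares two ways of absorbing the scalar factor $\CC$, both sending $(m \tensor z) \tensor n$ to $m \tensor zn$; and the pentagon and hexagon are the familiar reassociation and reassociation-with-swap identities, verified by tracing a generator $m \tensor n \tensor r$, respectively $m \tensor n \tensor r \tensor s$, along each composite. Each verification is literally the corresponding coherence identity of the symmetric monoidal category of Hilbert spaces, restricted to generators.

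The only genuine obstacle is the setup rather than the calculation. Because $\stensor$ is defined via the \emph{universal} normal representation, the iterated product $(\M \stensor \N) \stensor \R$ is a priori built from the universal normal representation of $\M \stensor \N$ rather than from its concrete action on $\H \tensor \K$; one must invoke representation-independence to identify it with the algebra generated on $(\H \tensor \K) \tensor \L$ and so to justify that $a$, $b$, and $c$ act on elementary tensors as claimed. Once this identification is in hand, the coherence conditions are inherited from $\mathbf{Hilb}$ and the four checks are immediate.
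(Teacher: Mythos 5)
Your proposal is correct and takes essentially the same route as the paper, which likewise derives the coherence conditions from the symmetric monoidal structure of Hilbert spaces under $\tensor$; you simply spell out the reduction (structure maps as conjugation by the Hilbert-space unitaries, verification on generating elementary tensors, and the representation-independence needed to handle the universal normal representations) that the paper leaves implicit.
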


\begin{proof}
This proposition follows from the similar claim that the category of Hilbert spaces and unitary operators is a symmetric monoidal category with respect to the usual tensor product $\tensor$, and the unit Hilbert space $\ell^2_1\iso \CC$. That claim may be proved by choosing bases.
\end{proof}

\section{Bounding Cardinality}

If we plan to collect all possible functions $\o \N \To \o\M$ into a single object, we should first be sure that their number is small, i.e., is in some sense bounded by a cardinality. We do not need a tight bound, but obtain one anyway with a little extra effort.

For completeness, we begin with a proof of the following fact:

\begin{lemma}\label{Bounding.A}
If $\M \subsetof \B(\H)$ is a von Neumann algebra, then $\M$ is generated by a subset of cardinality at most $\dim \H$.
\end{lemma}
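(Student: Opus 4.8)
The plan is to reduce generation to a density-character estimate for the weak operator topology. Recall that the von Neumann algebra generated by a subset $S \subseteq \B(\H)$ is the smallest weakly closed unital $*$-subalgebra of $\B(\H)$ containing $S$; since this algebra is both weakly closed and a linear subspace, it will contain all of $\M$ as soon as it contains the unit ball $\M_1 = \{m \in \M : \|m\| \le 1\}$, because $\CC \cdot \M_1 = \M$. Thus it suffices to produce a subset $S \subseteq \M_1$ of cardinality at most $\dim \H$ that is dense in $\M_1$ for the weak operator topology: then the weak closure of $S$ contains $\M_1$, so the von Neumann algebra generated by $S$ contains $\M_1$ and hence all of $\M$, while conversely it is contained in $\M$ because $S \subseteq \M$. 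I would treat $\dim \H < \infty$ separately, where $\M$ is a finite-dimensional $C^*$-algebra and the standard fact that such an algebra is generated by two self-adjoint elements (one with distinct eigenvalues separating all blocks, one providing the off-diagonal matrix units) yields a generating set of size $\le 2 \le \dim \H$, the case $\dim \H = 1$ being trivial.

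For infinite $\kappa = \dim \H$, I would fix a dense subset $D \subseteq \H$ with $|D| = \kappa$, for instance the rational-linear span of an orthonormal basis. On the unit ball the weak operator topology is the initial topology induced by the functionals $T \mapsto \langle T\xi, \eta\rangle$ for $\xi,\eta \in D$: approximating arbitrary vectors by elements of $D$ and using $\|T\| \le 1$ shows these functionals already generate the topology on the bounded set. Consequently this topology has a base consisting of the finite intersections of the sets $\{T : |\langle T\xi,\eta\rangle - c| < r\}$ with $\xi,\eta \in D$, $c \in \mathbb Q + i\mathbb Q$, and $r$ a positive rational. There are only $\kappa$ such sets, and hence only $\kappa$ finite intersections, so the weight of this space is at most $\kappa$.

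The weight of a topological space dominates its density character and is inherited by subspaces, so the subspace $\M_1$ of the unit ball of $\B(\H)$ again has weight at most $\kappa$, and therefore admits a dense subset $S$ with $|S| \le \kappa$. By the reduction of the first paragraph, $S$ generates $\M$. The main obstacle is precisely this passage to the subspace $\M_1$: density characters are not in general inherited by subspaces of topological spaces, and it is to sidestep this that I route the argument through the weight, which \emph{is} hereditary. The only other point needing care is the $\epsilon$-approximation verifying that the $D$-functionals recover the full weak operator topology on the bounded ball; everything else is routine cardinal arithmetic for infinite $\kappa$, using $\kappa \cdot \kappa = \kappa$ and that the set of finite subsets of a set of size $\kappa$ again has cardinality $\kappa$.
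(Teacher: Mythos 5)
Your proof is correct, and its skeleton matches the paper's: both arguments reduce the lemma to producing a weakly dense subset of the unit ball $\M_1$ of cardinality at most $\dim \H$, obtained by counting a base for a weak topology generated by $\dim \H$ many rank-one functionals, and both dispose of the finite-dimensional case by hand. Where you genuinely diverge is in the justification of the key step, namely that $\dim\H$ many functionals already determine the topology on the ball. The paper works with the $w^*$-topology intrinsically on $\M$: it arranges for every normal state to be a vector state, extracts a norm-dense subset $X \subsetof \M_*$ of cardinality $\dim\H$, and then observes that the initial topology induced by $X$ on $\M_1$ is a coarser Hausdorff topology on a compact Hausdorff space (the $w^*$-unit ball), hence coincides with the $w^*$-topology; the basis count and the choice of one point per basic open set then happen directly inside $\M_1$. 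You instead work with the weak operator topology on the ambient ball of $\B(\H)$, verify by the uniform estimate $\lvert\langle T\xi,\eta\rangle-\langle T\xi',\eta'\rangle\rvert \leq \lVert\xi-\xi'\rVert\,\lVert\eta\rVert+\lVert\xi'\rVert\,\lVert\eta-\eta'\rVert$ for $\lVert T\rVert\leq 1$ that the functionals indexed by a dense $D\subsetof\H$ recover the topology, and then invoke the heredity of weight to pass from $\B(\H)_1$ down to $\M_1$ --- a step the paper sidesteps by restricting to $\M_*$ from the outset. Your route is marginally more elementary (no appeal to compactness of the $w^*$-ball), and your explicit flagging of the fact that density character, unlike weight, is not hereditary is exactly the right point to worry about; the paper's route is shorter once one is willing to quote that a coarser Hausdorff topology on a compact space is the original topology. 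Both arguments are complete.
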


\begin{proof}
If $\dim \H$ is finite, then $\M$ is a direct sum of matrix algebras. The statement is obvious for matrix algebras, and this special case implies the general one.

If $\dim \H$ is infinite, then we may assume without loss of generality that every state on $\M$ is a vector state. The Hilbert space $\H$ has a dense subset of cardinality $\dim \H$, so the predual $\M_*$ also has a dense subset $X$ of cardinality $\dim \H$ relative to the norm topology. Let $\tau$ be the initial topology on the unit ball $\M_1\subset \M$ induced by $X$, i.e., the coarsest topology such that all the elements of $X$ are continuous. This topology is Hausdorff since if $m_0,  m_1 \in (\M_*)^* = \M$ are distinct, then they disagree on some state $\mu \in \M_*$, and therefore on some element of $X$. Since the unit ball $\M_1$ is a compact Hausdorff space in the $w^*$-topology, and $\tau$ is obviously coarser, the two topologies coincide. We conclude that the $w^*$-topology on $\M_1$ has a basis of cardinality no larger than $\dim \H$. Choosing an operator from each basis element, we obtain a dense subset of $\M_1$ of cardinality no larger than $\dim \H$; the same then follows for $\M$.
\end{proof}

If $\K$ is a Hilbert space and $\zeta \in \K$ is a vector, then $\hat \zeta: \CC \To \K$ denotes the unique linear map such that $\hat \zeta(1) = \zeta$.

\begin{lemma}\label{Bounding.B}
Let $\K$ and $\L$ be Hilbert spaces, and let  $\N \subsetof \B(\K)$ and $\M \subsetof  \B(\L) \stensor \N$ be von Neumann algebras. There is a minimum von Neumann algebra $\S \subsetof \B(\L)$ such that $\M \subsetof \S \stensor \N$, and it is generated by elements of the form $(1 \tensor \hat \zeta_0^*) m  (1 \tensor \hat \zeta_1)$ for  $\zeta_0, \zeta_1 \in \K$, and $m \in \M$ . If $\M$ has a faithful representation on a Hilbert space $\H$, then $\S$ has a faithful representation on a Hilbert space of dimension at most $2^{(\aleph_0 \cdot \dim \H \cdot \dim \K)}$.
\end{lemma}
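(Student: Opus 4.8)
The plan is to take $\S$ to be the von Neumann subalgebra of $\B(\L)$ generated by the slice operators $R_{\zeta_0,\zeta_1}(m) := (1\tensor\hat\zeta_0^*)\,m\,(1\tensor\hat\zeta_1)$ for $\zeta_0,\zeta_1\in\K$ and $m\in\M$, and then to verify the three assertions in turn: that $\M\subsetof\S\stensor\N$, that $\S\subsetof\R$ whenever $\R\subsetof\B(\L)$ is a von Neumann algebra with $\M\subsetof\R\stensor\N$, and finally the cardinality bound. Everything rests on the matrix-coefficient identity $\langle\xi_0\tensor\zeta_0,\,m(\xi_1\tensor\zeta_1)\rangle = \langle\xi_0,\,R_{\zeta_0,\zeta_1}(m)\,\xi_1\rangle$ for all $\xi_0,\xi_1\in\L$, which is immediate from the definitions of $\hat\zeta_0$ and $\hat\zeta_1$, and which I would record first.

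For the inclusion $\M\subsetof\S\stensor\N$, I would fix $m\in\M$ and show that $m$ commutes with the von Neumann algebra $\S'\stensor\N'$, which by the definition of the spatial tensor product is generated by $\S'\tensor1$ together with $1\tensor\N'$. That $m$ commutes with $1\tensor\N' = (\B(\L)\stensor\N)'$ is immediate from $m\in\M\subsetof\B(\L)\stensor\N$. That $m$ commutes with $s'\tensor1$ for each $s'\in\S'$ is exactly the content of the matrix-coefficient identity: pairing $(s'\tensor1)m$ and $m(s'\tensor1)$ against $\xi_0\tensor\zeta_0$ and $\xi_1\tensor\zeta_1$ reduces the claim to $s'\,R_{\zeta_0,\zeta_1}(m) = R_{\zeta_0,\zeta_1}(m)\,s'$, which holds because $R_{\zeta_0,\zeta_1}(m)\in\S$. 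Hence $m\in(\S'\stensor\N')' = \S\stensor\N$. The one genuinely non-elementary ingredient here is the commutation theorem for spatial tensor products, $(\S'\stensor\N')' = \S''\stensor\N'' = \S\stensor\N$, which I would take from the standard theory reviewed in Section \ref{The Spatial Tensor Product}.

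For minimality, suppose $\M\subsetof\R\stensor\N$. Each $R_{\zeta_0,\zeta_1}$ is the normal map $\mathrm{id}\tensor\omega_{\zeta_0,\zeta_1}$, where $\omega_{\zeta_0,\zeta_1}$ is the vector functional $b\mapsto\langle\zeta_0,b\zeta_1\rangle$ on $\N$; on an elementary tensor $a\tensor b$ with $a\in\R$ it returns $\omega_{\zeta_0,\zeta_1}(b)\,a\in\R$, so by $w^*$-density of the elementary tensors and $w^*$-continuity it carries $\R\stensor\N$ into $\R$. Since each generator $R_{\zeta_0,\zeta_1}(m)$ of $\S$ is the image of some $m\in\M\subsetof\R\stensor\N$, we get $\S\subsetof\R$, so $\S$ is indeed the minimum.

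For the cardinality bound the point is that $\S$ admits a small generating set. First, because $R_{\zeta_0,\zeta_1}(m)$ is norm-continuous and sesquilinear in $(\zeta_0,\zeta_1)$, the slices taken over an orthonormal basis of $\K$ already generate $\S$, which cuts the index set down to $\dim\K\cdot\dim\K$ pairs. Next, assuming $\M\subsetof\B(\H)$ faithfully, Lemma \ref{Bounding.A} yields a generating set of cardinality at most $\dim\H$, whose rational-coefficient $*$-algebra $D$ is $w^*$-dense in $\M$ and has cardinality at most $\aleph_0\cdot\dim\H$; since each $R_{\zeta_0,\zeta_1}$ is normal, the slices over $D$ generate the same algebra as the slices over $\M$. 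Thus $\S$ is generated by at most $\aleph_0\cdot\dim\H\cdot\dim\K$ operators, and Lemma \ref{TheCategory.C} furnishes a faithful representation of $\S$ on a Hilbert space of dimension at most $2^{\aleph_0\cdot(\aleph_0\cdot\dim\H\cdot\dim\K)} = 2^{\aleph_0\cdot\dim\H\cdot\dim\K}$. I expect the inclusion $\M\subsetof\S\stensor\N$ — and in particular pinning down precisely where the commutation theorem enters — to be the main obstacle; the minimality and the counting are comparatively routine.
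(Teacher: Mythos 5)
Your proposal is correct, and it is built from the same ingredients as the paper's proof: the same generating set of slice operators, the same slice-map idea for minimality, and the same reduction via Lemmas \ref{Bounding.A} and \ref{TheCategory.C} for the cardinality bound. The one genuine divergence is in how the inclusion $\M \subsetof \S \stensor \N$ is obtained. The paper first treats the case $\N = \B(\K)$, proving the inclusion by the explicit matrix-unit expansion $m = \sum_{\alpha,\beta}\,[(1\tensor\hat\zeta_\alpha^*)m(1\tensor\hat\zeta_\beta)]\tensor\hat\zeta_\alpha\hat\zeta_\beta^*$ over an orthonormal basis, and then passes to general $\N$ by combining $\M\subsetof\S\stensor\B(\K)$ with $\M\subsetof\B(\L)\stensor\N$; you instead show directly that each $m$ commutes with $\S'\tensor 1$ and with $1\tensor\N'$ and conclude via $(\S'\stensor\N')'=\S\stensor\N$. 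These routes are closer than they appear: the paper's final step is exactly the identity $(\S\stensor\B(\K))\cap(\B(\L)\stensor\N)=(\S'\stensor\N')'=\S\stensor\N$, so both arguments lean on the commutation theorem at the same point --- yours says so explicitly, while the paper's expansion handles the special case $\N=\B(\K)$ by elementary means before that step. Two minor points in your favor: your reduction from all slices to basis slices of a countable-coefficient dense $*$-algebra is exactly what is needed to reconcile the generating set named in the statement with a small one, a point the paper leaves implicit; and your normality check for the slice maps (left and right multiplication by fixed bounded operators) cleanly justifies the minimality step that the paper carries out with rank-one projections.
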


\begin{proof}
Let $M \subsetof \M$ be a generating subset of cardinality no greater than $\dim \H$. Choose an orthonormal basis $\{\zeta_\alpha\}_{\alpha \in I}$ of $\K$.

First, we consider the case $\N = \B(\K)$. If a von Neumann algebra $\tilde \S\subsetof \B(\L)$ satisfies $M \subsetof  \tilde \S \stensor \B(\K)$, then for each generator $m \in M$, and any three basis elements $\zeta_\alpha$, $\zeta_\beta$, and $\zeta_\gamma$,
$$ [(1 \tensor \hat \zeta_\alpha^* ) m (1 \tensor \hat \zeta_\beta)] \tensor ( \hat \zeta_\gamma \hat \zeta_\gamma^*)  =(1 \tensor \hat \zeta_\gamma \hat \zeta_\alpha^*) m (1 \tensor \hat \zeta_\beta \hat \zeta_\gamma^*) \in  \tilde \S \stensor \B(\K),$$
so $ [(1 \tensor \hat \zeta_\alpha^* ) m (1 \tensor \hat \zeta_\beta)] \tensor 1 \in \tilde \S \tensor \B(\K)$, i.e., $(1 \tensor \hat \zeta_\alpha^* ) m (1 \tensor \hat \zeta_\beta) \in \tilde \S$. Thus, the von Neumann algebra $\S\subsetof \B(\L)$ generated by operators of the form $(1 \tensor \hat \zeta_\alpha^* ) m (1 \tensor \hat \zeta_\beta)$ is a subalgebra of $\tilde \S$. That $\S$ itself satisfies the inequality $M \subsetof \S \stensor \B(\K) $ follows from the following calculation for any generator $m \in M$:
$$m = \sum_{\alpha,\beta} (1 \tensor  \hat \zeta_\alpha \hat \zeta_\alpha^* ) m ( 1 \tensor \hat \zeta_\beta \zeta_\beta^*) = 
\sum_{\alpha,\beta} [( 1 \tensor \hat \zeta_\alpha^* ) m ( 1 \tensor \hat \zeta_\beta) ]  \tensor ( \hat \zeta_\alpha \hat \zeta_\beta^*)  $$
Therefore $\S \subsetof \B(\L)$ is the minimum von Neumann algebra such that $M \subsetof \B(\K) \stensor \S$, or equivalently, $\M \subsetof \B(\K) \stensor \S$.

For an arbitrary von Neumann algebra $\N \subsetof \B(\K)$, it is still true that $\S$ is the minimum von Neumann algebra such that $\M \subsetof \S \stensor \B(\K) $. By assumption $\M \subsetof \B(\L) \stensor \N$, so $\M \subsetof \S \stensor \N$. Since for any $\tilde \S$ such that $\M \subsetof  \tilde \S \stensor \N$, we trivially have $\M \subsetof \tilde \S  \stensor \B(\K)$, the von Neumann algebra $\S$ is also minimum such that $\M \subsetof \S \stensor \N$.

Our minimal von Neumann algebra $\S$ is explicitly generated by at most $ \dim \K \cdot \dim \H \cdot \dim \K =  \dim \H \cdot \dim \K$ elements, so 
by Lemma \ref{TheCategory.C}, it has a faithful representation on a Hilbert space of dimension at most $2^{\aleph_0 \cdot \dim \H \cdot \dim \K}$.
\end{proof}

The following proposition is intended to dissuade the reader from the idea that a better bound is available.

\begin{proposition}\label{Bounding.C}
If $\dim \H, \dim \K \geq 2$, then the bound given by Lemma \ref{Bounding.B} above is strict. Specifically, there is a Hilbert space $\L$, and a morphism $\pi: \B(\H) \To \B(\L) \stensor \B(\K)$ such that the minimum von Neumann algebra $\S \subsetof \B(\L)$ satisfying $\pi(\B(\H))\subsetof \S \stensor \B(\K)$ has no faithful representation on a Hilbert space of dimension less than $2^{\aleph_0 \cdot \dim \H \cdot \dim \K}$.
\end{proposition}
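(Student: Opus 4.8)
The plan is to produce a single explicit $\pi$ and to establish the lower bound by exhibiting, inside the associated minimal $\S$, a family of $2^{\aleph_0\cdot\dim\H\cdot\dim\K}$ mutually orthogonal nonzero projections. The elementary observation that does the work is this: if $\{e_b\}$ are mutually orthogonal nonzero projections in $\S$ and $\rho$ is any faithful normal representation of $\S$ on a Hilbert space $\mathcal G$, then the $\rho(e_b)$ are nonzero and mutually orthogonal, so a choice of unit vector in the range of each yields an orthonormal family of size $2^{\aleph_0\cdot\dim\H\cdot\dim\K}$, whence $\dim\mathcal G\geq 2^{\aleph_0\cdot\dim\H\cdot\dim\K}$. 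Since Lemma \ref{Bounding.B}, applied to $\M=\pi(\B(\H))\iso\B(\H)$ and $\N=\B(\K)$, supplies the matching upper bound, producing such a family simultaneously shows the bound is attained, hence strict.

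Set $\kappa=\dim\H\cdot\dim\K$ and fix a set $C$ with $\card C=2^{\aleph_0\cdot\kappa}$. I would index the construction by $C$: choose a family $\{\pi_c\}_{c\in C}$ of unital normal embeddings $\pi_c:\B(\H)\To\B(\H)\stensor\B(\K)$, put $\L=\ell^2(C,\H)$ and $\S_0=\Dsum_{c\in C}\B(\H)\subsetof\B(\L)$ (the $W^*$-product), and let $\pi:\B(\H)\To\S_0\stensor\B(\K)=\Dsum_{c\in C}\bigl(\B(\H)\stensor\B(\K)\bigr)$ be the morphism whose $c$-th component is $\pi_c$. Each $\pi_c$ is injective, so $\pi$ is injective with $\pi(\B(\H))\iso\B(\H)$. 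By Lemma \ref{Bounding.B} the minimal $\S$ is the $W^*$-subalgebra of $\S_0$ generated by the slices, and minimality translates, since $\N=\B(\K)$ is a factor, into the statement that its commutant in $\B(\L)$ satisfies $\S'\tensor 1_\K=\pi(\B(\H))'\cap(\B(\L)\tensor 1_\K)$. The whole point is to choose the $\pi_c$ so that this intersection is exactly the diagonal algebra $\ell^\infty(C)\tensor 1_\K$; then $\S'=\ell^\infty(C)$ is abelian, so $Z(\S)=\S'=\ell^\infty(C)$ has $2^{\aleph_0\cdot\kappa}$ central atoms, and these are the required $2^{\aleph_0\cdot\kappa}$ mutually orthogonal projections of $\S$.

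Writing $\pi_c(x)=W_c\,(x\tensor 1_\K)\,W_c^*$ for unitaries $W_c\in\B(\H\tensor\K)$, a block computation shows that the desired intersection reduces to two conditions on the family $\{W_c\}$: (i) for each $c$ one has $\pi_c(\B(\H))'\cap(\B(\H)\tensor 1_\K)=\CC$, so that each diagonal block contributes only scalars; and (ii) for $c\neq c'$ there is no nonzero $S\in\B(\H)$ with $(S\tensor 1_\K)\pi_{c'}(x)=\pi_c(x)(S\tensor 1_\K)$ for all $x$, so that no off-diagonal block of the form $S\tensor 1_\K$ survives and distinct indices remain distinct central atoms. Both are generic conditions — the nonexistence of the relevant $\B(\H)\tensor 1_\K$-intertwiners — so a family of full cardinality $2^{\aleph_0\cdot\kappa}$ satisfying (i) and satisfying (ii) pairwise should be selectable, drawing on the $2^{\aleph_0\cdot\kappa}$ unitaries available in $\B(\H\tensor\K)$.

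The hard part will be verifying (i) and (ii) robustly enough to reach a family of the full cardinality $2^{\aleph_0\cdot\kappa}$, rather than merely $2^{\aleph_0}$; this is precisely where one must control the relative position of the two tensor decompositions of $\H\tensor\K$ encoded by each $W_c$. The governing phenomenon is the same one that makes $\o\CC^2\ast\CC^2$ ``larger than expected'': there a single product of generators, $pqp$, already has continuum spectrum and its spectral projections supply $2^{\aleph_0}$ orthogonal projections, as in the spectral analysis underlying Lemma \ref{Categorical.C} and Theorem \ref{Categorical.D}. I expect this genericity-and-cardinality bookkeeping to be the only laborious ingredient; once (i) and (ii) hold, the center $\ell^\infty(C)$ together with the reduction of the first paragraph finishes the proof. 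A transparent case to isolate first is $\dim\H\leq\dim\K$ with $\B(\H)$ embeddable in $\B(\K)$: there one may instead take $\S_0=\ell^\infty(C)$ and $\pi_c:\B(\H)\To\B(\K)$, so that every slice is scalar-valued, distinct $\pi_c$ are automatically separated by their slices, and $\S=\ell^\infty(C)$ follows from the fact that a point-separating family of bounded functions generates all of $\ell^\infty(C)$ as a $W^*$-algebra.
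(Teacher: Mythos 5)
Your architecture is sound and is in fact close in spirit to the paper's proof: both reduce the proposition to exhibiting $2^{\aleph_0\cdot\dim\H\cdot\dim\K}$ pairwise ``disjoint'' irreducible positions of $\B(\H)$ relative to $\B(\K)$ inside $\B(\H\tensor\K)$. Your reductions are correct as far as they go: the identity $\S'\tensor 1_\K=\pi(\B(\H))'\cap(\B(\L)\tensor 1_\K)$ follows from Lemma \ref{Bounding.B}'s description of $\S$ as generated by slices (and does not actually need $\B(\K)$ to be a factor); conditions (i) and (ii) do force $\S'=\ell^\infty(C)$, hence $\S=\S''=\Dsum_c\B(\H)$ with center $\ell^\infty(C)$; and a family of $2^{\aleph_0\cdot\kappa}$ orthogonal nonzero projections does give the dimension lower bound in any faithful representation.

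The genuine gap is the existence of the family $\{\pi_c\}_{c\in C}$ itself, which you defer to ``genericity-and-cardinality bookkeeping.'' No counting or genericity argument can close this step, because the ambient supply of candidates has exactly the target cardinality: already for $\dim\H=\dim\K=2$ there are only $2^{\aleph_0}$ unitaries $W$ in $M_4(\CC)$, and a single (ii)-equivalence class (the orbit of one $\pi_c$ under conjugation by $u\tensor 1$, $u$ unitary in $\B(\H)$) also has cardinality $2^{\aleph_0}$; cardinal arithmetic therefore cannot distinguish ``one equivalence class'' from ``$2^{\aleph_0}$ equivalence classes,'' and a transfinite selection runs into the same obstruction after its first step. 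What is needed is an explicit construction of a full-cardinality family together with an explicit invariant separating its members. This is precisely the paper's Lemma \ref{Bounding.D}: take $W_r=\Dsum_\alpha u_\alpha$ block-diagonal with respect to a basis $\{\zeta_\alpha\}$ of $\K$, where $r=\{r_\alpha=\mathrm{Ad}(u_\alpha)\}$ is a family of automorphisms of $\B(\H)$ with $r_0=\mathrm{id}$; irreducibility of each resulting representation of $\B(\H)\ast\B(\K)$ gives your (i), and the identity $r_\alpha(m_0)=m_1\iff(1\tensor\hat\zeta_0^*\,)\rho_r(\hat\zeta_0\hat\zeta_\alpha^*\,m_0\,\hat\zeta_\alpha\hat\zeta_0^*)(1\tensor\hat\zeta_0)=\cdots$ separates distinct families, giving your (ii). Your proposal would be complete if you proved (or invoked) that lemma; as written, the step you call ``the only laborious ingredient'' is the entire content of the proposition. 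Your closing ``transparent case'' has the same issue (you must still produce $2^{\aleph_0\cdot\dim\K}$ distinct embeddings $\B(\H)\To\B(\K)$, not merely observe that distinct ones are separated by slices), and in any case it does not cover $\dim\H>\dim\K$.
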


\begin{proof}
Fix a faithful representation of $\B(\H) \ast \B(\K)$. In particular, this is a representation of $\B(\K)$, and so is unitarily equivalent to a representation on $\L \tensor \K$ for some Hilbert space $\L$, with $\B(\K)$ acting canonically on the second factor. Let $\pi: \B(\H) \To \B(\L \tensor \K)$ be the corresponding representation of $\B(\H)$.

Let $\S\subsetof \B(\L)$ be the minimum von Neumann algebra such that $\pi(\B(\H)) \subsetof \S \stensor \B(\K)$. Clearly we also have that $\CC \stensor \B(\K) \subsetof \S \stensor \B(\K)$, so $(\pi(\B(\H)) + \CC \stensor \B(\K) )'' \subsetof \S \stensor \B(\K)$. If we now take intersection of both sides with $\B(\L) \stensor \CC$, we obtain the inequality
\begin{align*}(\pi(\B(\H)) + \CC \stensor \B(\K))'' \intersect &(\pi(\B(\H)) + \CC \stensor \B(\K))'   \\ &= (\pi(\B(\H)) + \CC \stensor \B(\K))'' \intersect \pi(\B(\H))' \intersect (\B(\L) \stensor \CC)
\\ & \subsetof
(\pi(\B(\H)) + \CC \stensor \B(\K))''  \intersect (\B(\L) \stensor \CC)
\\  &\subsetof
( \S \stensor \B(\K)) \intersect (\B(\L) \stensor \CC) 
= \S \stensor \CC.
\end{align*}
Thus the center of $\B(\H) \ast \B(\L) \iso (\pi(\B(\H)) + \CC \stensor \B(\K))''$ is a $W^*$-subalgebra $\S \stensor \CC$. The proposition then follows from the fact that $\B(\H)\ast \B(\K)$ has exactly $2^{\aleph_0 \cdot\dim\H \cdot\dim \K}$ irreducible representations up to unitary equivalence; this is Lemma \ref{Bounding.D} below.
\end{proof}

\begin{lemma}\label{Bounding.D}
Let $\H$ and $\K$ be Hilbert spaces of dimension no less then $2$. Then the $W^*$-algebra $\B (\H) \ast \B(\K)$ has exactly $2^{\aleph_0 \cdot\dim\H \cdot\dim \K}$ irreducible representations up to unitary equivalence.
\end{lemma}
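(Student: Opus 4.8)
The plan is to prove the two bounds separately. Write $\kappa = \dim \H$ and $\lambda = \dim \K$; by the symmetry $\B(\H) \ast \B(\K) \iso \B(\K) \ast \B(\H)$ I may assume $\kappa \geq \lambda$. For the upper bound, note that an irreducible normal representation $\rho$ on a nonzero Hilbert space $\H_0$ is cyclic for each of its unit vectors $\xi$, and hence is the GNS representation of the normal state $x \mapsto \langle \rho(x)\xi, \xi\rangle$. Thus the number of irreducible representations up to unitary equivalence is at most the number of normal states on $\B(\H) \ast \B(\K)$. By Lemma \ref{Bounding.A} the factors $\B(\H)$ and $\B(\K)$ are generated by at most $\kappa$ and $\lambda$ elements, so the free product is generated by at most $\kappa + \lambda \leq \kappa \cdot \lambda$ elements, and the counting in the proof of Lemma \ref{TheCategory.C} bounds the number of normal states, and therefore the number of irreducible representations, by $2^{\aleph_0 \cdot \kappa \cdot \lambda}$.

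For the lower bound I would identify, via the universal property of the free product, a representation on $\H_0$ with a pair $(\pi_1, \pi_2)$ of normal representations of $\B(\H)$ and $\B(\K)$ on the common space $\H_0$; two such pairs are unitarily equivalent exactly when a single unitary conjugates both components, and the pair is irreducible exactly when the joint commutant $\pi_1(\B(\H))' \cap \pi_2(\B(\K))'$ is $\CC$. I will use the standard fact that a normal representation of the type I factor $\B(\H)$ is an amplification of the identity representation. When $\kappa$ is infinite the target exponent is $\kappa$, and I would take $\dim \H_0 = \kappa$ with $\pi_1$ an \emph{isomorphism} $\B(\H) \iso \B(\H_0)$. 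Every such pair is then automatically irreducible, and since $\pi_1(\B(\H))' = \CC$ any unitary fixing $\pi_1$ is scalar; consequently distinct $\pi_2$ give inequivalent irreducibles. Varying the projection $\pi_2(e_{11})$ over the $2^\kappa$ subspaces spanned by suitable subsets of a fixed orthonormal basis produces $2^\kappa = 2^{\aleph_0 \cdot \kappa \cdot \lambda}$ classes.

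When $\kappa = n$ and $\lambda = m$ are both finite the target is $2^{\aleph_0}$, so a continuum of inequivalent irreducibles suffices. Here I would represent on $\CC^n \tensor \CC^m$ by $\pi_1(a) = a \tensor 1$ and $\pi_2(b) = U(1 \tensor b)U^{*}$ for $U \in U(nm)$; the equivalence classes correspond to the double cosets in $(1 \tensor U(m)) \backslash U(nm) / (U(n) \tensor 1)$, a space of positive dimension $(nm)^2 - n^2 - m^2 > 0$ and hence of cardinality $2^{\aleph_0}$. A dimension count shows that for generic $U$ the subalgebras $\B(\CC^n) \tensor 1$ and $U(1 \tensor \B(\CC^m))U^{*}$ meet only in the scalars, so the reducible locus is a proper closed subset and a continuum of the double cosets are irreducible. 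Together with the upper bound this yields exactly $2^{\aleph_0}$.

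The heart of the argument is the lower bound, and the crux is proving inequivalence rather than merely counting parameters, since a naive cardinality comparison of $U(nm)$ against $U(n) \times U(m)$ is indeterminate. In the infinite case this difficulty dissolves once one observes that fixing a surjective $\pi_1$ leaves no unitary freedom, so the combinatorial count of projections transfers verbatim to a count of representation classes. In the finite case the delicate step is the transversality estimate showing that the reducible locus is a proper, lower-dimensional subset of the positive-dimensional moduli space; this is the main obstacle I anticipate.
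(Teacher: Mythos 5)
Your overall strategy---an upper bound by counting normal states via Lemmas \ref{Bounding.A} and \ref{TheCategory.C}, and a lower bound by exhibiting an explicit family of irreducible representations on $\H \tensor \K$ in which one factor acts in a fixed way while the other is varied---matches the paper's, and your upper bound is exactly the paper's. The lower bounds diverge. The paper uses a single construction valid in all dimensions: $\B(\K)$ acts canonically on the second tensor factor, while $\B(\H)$ acts blockwise by $\xi \tensor \zeta_\alpha \mapsto r_\alpha(m)\xi \tensor \zeta_\alpha$ for a family of automorphisms $r = \{r_\alpha\}$ indexed by an orthonormal basis of $\K$; irreducibility is immediate, families with $r_0 = 1$ are distinguished by the kernels of the resulting representations, and counting automorphisms yields $2^{\aleph_0 \cdot \dim\H \cdot \dim\K}$ inequivalent irreducibles in one stroke. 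Your infinite-dimensional argument (fix $\pi_1$ to be an isomorphism, so any intertwiner between two pairs sharing that first leg lies in $\pi_1(\B(\H))' = \CC$, then vary $\pi_2(e_{11})$ over $2^{\kappa}$ basis-spanned projections of rank and corank $\kappa$) is correct and complete, and arguably cleaner than the paper's in that case. The finite-dimensional case is the weak point, as you anticipate, and two things need repair there. First, the irreducibility criterion is misstated: the pair is irreducible iff the \emph{commutants} $1 \tensor \B(\CC^m)$ and $U(\B(\CC^n) \tensor 1)U^{*}$ meet only in scalars, not the images; the same genericity heuristic applies, but to the corrected condition. Second, the genericity must actually be argued: reducibility is a closed condition in $U$ (upper semicontinuity of the dimension of the intersection of the two commutants), it is not all of $U(nm)$ (take $U = 1$), each double coset has dimension at most $n^2 + m^2 < (nm)^2$ and hence empty interior, so by Baire category the nonempty open irreducible locus meets uncountably many double cosets, and the metrizable orbit space, being uncountable, has cardinality $2^{\aleph_0}$. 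All of this is fillable, but it is precisely the work that the paper's twisted-tensor-product construction sidesteps by treating finite and infinite dimensions uniformly.
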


\begin{proof}
We construct a set of surjective representations $\{\rho_r: \B(\H) \ast \B(\K) \To \B(\H \tensor \K)\}$, each giving $\B(\H \tensor \K)$ the structure of a ``twisted tensor product''. These representations will have different kernels.

Choose an orthonormal basis $\{\zeta_\alpha\}$ of $\K$. For every indexed family $r = \{r_\alpha: \B(\H) \To \B(\H)\}$ of automorphisms, let $\tilde r: \B(\H) \To \B(\H \tensor \K)$ be the direct sum of these representations  relative to the basis $\{\zeta_\alpha\}$ in the sense that $\tilde r(m)(\xi \tensor \zeta_\alpha) = r_\alpha(m) \xi \tensor  \zeta_\alpha$. Finally, define $\rho_r: \B(\H) \ast \B(\K) \To \B(\H \tensor \K)$ with $\B(\H)$ acting via $\tilde r$, and $\B(\K)$ acting canonically.

Each such representation $\rho_r$ is irreducible. Indeed, the image of $\rho_r$ trivially contains $\CC \stensor \B(\K)$, and because each $r_\alpha$ is an automorphism, it also contains all operators of the form $m \tensor \hat \zeta_\alpha \hat \zeta_\alpha^*$, and therefore all of $\B(\H) \tensor \CC$. Thus, $\B(\H) \tensor \B(\K) \subsetof \rho_r(\B(\H)\ast \B(\K))$.

If $r_0$ is the identity $\B(\H) \To \B(\H)$, then $r_\alpha(m_0) = m_1$ iff $(1 \tensor \hat \zeta_\alpha^*) \rho_r(m_0) (1 \tensor \hat \zeta_\alpha) = (1 \tensor \hat \zeta_0^*) \rho_r(m_1) (1 \tensor \hat \zeta_0)$, i.e., $\rho_r( \hat \zeta_0 \hat \zeta_\alpha^*) \rho_r(m_0) \rho_r(\hat \zeta_\alpha \hat \zeta_0^*) = \rho_r(\hat \zeta_0 \hat \zeta_0^*) \rho_r(m_1) \rho_r(\hat \zeta_0 \hat \zeta_0^*)$. Thus, two distinct indexed families of automorphisms $\B(\H) \To \B(\H)$ induce representations of $\B(\H) \ast \B(\K)$ with distinct kernels, if the 0-indexed automorphism of both families is the identity. Since $\B(\H)$ clearly has at least $(2^{\aleph_0})^{(\dim \H -1)}$ distinct automorphisms, we obtain $(2^{\aleph_0\cdot (\dim \H -1)})^{(\dim\K -1)} = 2^{\aleph_0 \cdot \dim\H \cdot \dim \K}$ unitarily inequivalent irreducible representations of $\B(\H) \ast \B(\K)$.

Of course, by Lemmas \ref{Bounding.A} and \ref{TheCategory.C}, the $W^*$-algebra $\B(\H) \ast \B(\K)$ cannot have more than $2^{\aleph_0 \cdot \dim \H \cdot \dim \K}$ unitarily inequivalent irreducible representations.
\end{proof}

\section{Free Exponentials}\label{Free Exponential}

We presently construct the free exponential $\M^{\ast \N}$ of $W^*$-algebras $\M$ and $\N$.  Intuitively, $\o \M^{\ast \N }$
is the collection of all functions from $\o \N $ to $\o \M$, and so $\M^{\ast \N}$ might be more appropriate called the coexponential of these algebras, but such terminology would clash with the precedent of naming objects according to their behavior in the opposite category, e.g., direct sum, tensor product, quantum group, etc. We will have constructed a functor $(-)^{\ast\N}$ which is left adjoint to the functor $- \stensor \N$ for each $W^*$-algebra $\N$, i.e., shown that the symmetric monoidal category $(\o\Wstar, \stensor)$ is closed.

\begin{theorem}\label{Free.A}
Let $\M$ and $\N$ be $W^*$-algebas. There is a $W^*$-algebra $\M^{\ast \N}$ and a morphism $\varepsilon: \M \To \M^{\ast \N}\stensor \N$, that is initial among such pairs in the sense that if $\R$ is another $W^*$-algebra and $\pi: \M \To \R \stensor \N$ a morphism, then there exists a unique morphism $\rho: \M^{\ast \N} \To \R$ such that $\pi = (\rho \stensor 1) \circ \varepsilon$.
\end{theorem}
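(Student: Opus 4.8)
The plan is to imitate the construction of coproducts in $\Wstar$ given above, using Lemma~\ref{Bounding.B} to bound the size of the algebras that can occur. Fix faithful representations $\M \subsetof \B(\H)$ and $\N \subsetof \B(\K)$. Call a pair $(\R, \pi)$, consisting of a $W^*$-algebra $\R$ and a morphism $\pi: \M \To \R \stensor \N$, a \emph{generating pair} if $\R$ is the minimum von Neumann algebra such that $\pi(\M) \subsetof \R \stensor \N$ in the sense of Lemma~\ref{Bounding.B}. Every pair reduces to a generating one: given an arbitrary $(\R, \pi)$, Lemma~\ref{Bounding.B} supplies a minimum subalgebra $\R_0 \subsetof \R$ with $\pi(\M) \subsetof \R_0 \stensor \N$, and $(\R_0, \pi)$ is generating. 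That same lemma bounds the dimension on which such an $\R_0$ can be faithfully represented by $2^{\aleph_0 \cdot \dim\H \cdot \dim\K}$, so, exactly as in the coproduct proof, I can form the set $S$ of all generating pairs up to isomorphism.

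First I would assemble the universal pair. Let $P = \Dsum_{s \in S} \R_s$. Because the spatial tensor product respects the Hilbert-space direct sum, $P \stensor \N \iso \Dsum_{s}(\R_s \stensor \N)$, so the universal property of the product $P$ lets the morphisms $\pi_s$ assemble into a single morphism $\Pi: \M \To P \stensor \N$ whose $s$-th component is $\pi_s$. I then define $\M^{\ast \N} \subsetof P$ to be the minimum von Neumann subalgebra with $\Pi(\M) \subsetof \M^{\ast\N} \stensor \N$, again by Lemma~\ref{Bounding.B}, and let $\varepsilon: \M \To \M^{\ast\N}\stensor\N$ be $\Pi$ with its codomain restricted.

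To verify the universal property, I would first reduce an arbitrary $(\R, \pi)$ to a generating pair isomorphic to some $s \in S$; the projection $p_s: P \To \R_s$ restricts to a morphism $\rho: \M^{\ast\N} \To \R_s \iso \R_0 \subsetof \R$, and since $(p_s \stensor 1)\circ \Pi = \pi_s$ by construction, this $\rho$ satisfies $(\rho \stensor 1)\circ \varepsilon = \pi$. For uniqueness, the key observation is that $\rho \stensor 1$ commutes with the slicing operation $y \mapsto (1 \tensor \hat\zeta_0^*)\, y\, (1 \tensor \hat\zeta_1)$, as one checks on elementary tensors and extends by normality; hence any $\rho$ satisfying the factorization must send the slice $(1 \tensor \hat\zeta_0^*)\,\varepsilon(m)\,(1 \tensor \hat\zeta_1)$ to the corresponding slice of $\pi(m)$. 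By Lemma~\ref{Bounding.B} these slices generate $\M^{\ast\N}$, and $\rho$ is normal, so $\rho$ is completely determined. I expect the main obstacle to be precisely this uniqueness bookkeeping: one must translate the structural content of Lemma~\ref{Bounding.B}, namely that $\M^{\ast\N}$ is generated by slices of $\varepsilon(\M)$, into the statement that slicing is compatible with $\rho \stensor 1$, so that normality of $\rho$ forces its values. The existence of the left adjoint can alternatively be deduced from the General Adjoint Functor Theorem once one knows $-\stensor\N$ preserves limits, but the explicit construction above is more in keeping with the treatment of products and coproducts, and avoids verifying limit-preservation directly.
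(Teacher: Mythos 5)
Your proposal is correct and follows essentially the same route as the paper: both take a direct sum over a set of "size-bounded" candidate pairs supplied by Lemma \ref{Bounding.B}, cut down to the minimum subalgebra $\M^{\ast\N}$ with $\varepsilon(\M)\subsetof\M^{\ast\N}\stensor\N$, and prove uniqueness by observing that the slices $(1\tensor\hat\zeta_0^*)\,\varepsilon(m)\,(1\tensor\hat\zeta_1)$ generate $\M^{\ast\N}$ and are determined by $\pi$. The only cosmetic difference is that the paper indexes the direct sum by the set of all morphisms $\sigma:\M\To\B(\ell^2_\kappa)\stensor\N$ with $\kappa=2^{\aleph_0\cdot\dim\H\cdot\dim\K}$, rather than by isomorphism classes of generating pairs.
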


\begin{proof}
We may assume that $\M \subsetof \B(\H)$ and $\N \subsetof \B(\K)$  are von Neumann algebras. We now repeatedly apply Lemma \ref{Bounding.B}.

 Let $\kappa =2^{\aleph_0 \cdot \dim \H \cdot \dim \K}$, and write $\ell^2_\kappa = \ell^2(\kappa)$. Define $\varepsilon: \M \To \bigoplus_\sigma \S_\sigma \stensor \N$ by $\varepsilon(m) = \bigoplus_\sigma \sigma(m)$, where both sums are taken over the set of morphisms $\sigma: \M \To  \B(\ell^2_\kappa) \stensor \N$, and  $\S_\sigma \subsetof \B(\ell^2_\kappa)$ is the minimum von Neumann algebra such that $\sigma(\M)\subsetof  \S_\sigma \stensor \N$. Clearly $\varepsilon (\M) \subsetof \bigoplus_\sigma (\S_\sigma \stensor \N) = (\bigoplus_\sigma \S_\sigma) \stensor \N$, so let  $\M^{\ast \N} \subsetof \bigoplus_\sigma \S_\sigma$ be the minimum von Neumann algebra such that $\varepsilon(\M) \subsetof  \M^{\ast \N} \stensor \N$.

Suppose that $\R$ is a $W^*$-algebra, and $\pi: \M \To \R \stensor \N$ a morphism. By Lemma \ref{Bounding.B}, there is a subalgebra $\S \subsetof \R$ that has a faithful representation on a Hilbert space of dimension $\kappa$ and satisfies $\pi(\M) \subsetof \S \stensor \N$. Choosing such a faithful representation, we write $\S \subsetof \B(\ell^2_\kappa)$. By construction there is a morphism $\rho: \M^{\ast\N} \To \S$ such that $\pi = (\rho \stensor 1) \circ \varepsilon $. After including $\S \subsetof \R$, we obtain a morphism  $\rho_0: \M^{\ast\N} \To \R$, which satisfies this same equation.

If $\rho_1 : \M^{\ast \N} \To \R$ is another morphism satisfying this equation, then for all $\zeta_0, \zeta_1 \in \K$, and $m \in \M$, 
$$\rho_i((1 \tensor \hat\zeta_0^* ) \varepsilon(m)(1 \tensor \hat \zeta_1 )) = (1 \tensor \hat\zeta_0^* )( \rho_i \stensor 1) (\varepsilon(m))(1 \tensor \hat \zeta_1) =  (1 \tensor \hat\zeta_0^* )\pi(m)(1 \tensor \hat \zeta_1),$$
for each $i= 0, 1$. Since elements of the form $(1 \tensor \hat\zeta_0^* ) \varepsilon(m)(1 \tensor \hat \zeta_1)$ generate $\M^{\ast \N}$, we conclude that $\rho_1 = \rho_0$, as desired.
\end{proof}

Because the pair $(\M^{\ast \N}, \varepsilon)$ constructed in Theorem \ref{Free.A} above satisfies a universal property, we may in the usual manner show that it is unique up to a canonical isomorphism of $\M^{\ast \N}$.  We may thus make the following definition:

\begin{definition}
Let $\M$ and $\N$ be $W^*$-algebras. The \emph{free exponential} of $\M$ and $\N$ is defined up to isomorphism, as a $W^*$-algebra $\M^{\ast \N}$ together with a morphism $\varepsilon_{\M, \N}: \M \To  \M^{\ast \N} \stensor \N$ satisfying the universal property of Theorem \ref{Free.A}. 
\end{definition}

We view the collection $\o\M^{\ast \N} \stensor \N$ as the Cartesian product of $\o \M^{\ast \N}$ and $\o \N$, and the morphism $ \o \varepsilon: \o \M^{\ast \N} \stensor \N \To  \o\M$ as evaluation of the first argument on the second.

\begin{proposition}
If $\M\subsetof \B(\H)$ and $\N\subsetof\B(\K)$ are von Neumann algebras, then their free exponential $\M^{\ast \N}$ can be faithfully represented on a Hilbert space of dimension no greater than $2^{\aleph_0 \cdot \dim \H \cdot \dim \K}$. Conversely, if $\H$ and $\K$ are Hilbert spaces of dimension greater than $1$, then every faithful representation of $\B(\H)^{\ast \B(\K)}$ is on a Hilbert space of dimension no less than $2^{\aleph_0 \cdot \dim \H \cdot \dim \K}$.
\end{proposition}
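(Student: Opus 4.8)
The plan is to treat the two halves of the proposition separately, in each case transferring the cardinality statement from the free exponential $\M^{\ast\N}$ to an ambient algebra for which we already have a bound. For the upper bound, I would trace through the construction of $\M^{\ast\N}$ given in the proof of Theorem~\ref{Free.A}. There, $\M^{\ast\N}$ was realized as the minimum von Neumann algebra $\S \subsetof \bigoplus_\sigma \S_\sigma$ satisfying $\varepsilon(\M) \subsetof \M^{\ast\N} \stensor \N$, where each $\S_\sigma \subsetof \B(\ell^2_\kappa)$ arises from Lemma~\ref{Bounding.B} applied to a morphism $\sigma: \M \To \B(\ell^2_\kappa) \stensor \N$. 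The key point is that $\M^{\ast\N}$ is itself a minimum von Neumann algebra of exactly the kind controlled by Lemma~\ref{Bounding.B}: it is generated by elements of the form $(1 \tensor \hat\zeta_0^*)\varepsilon(m)(1 \tensor \hat\zeta_1)$, so with $\dim \H \cdot \dim \K$ generators (taking $\varepsilon(\M)$ to play the role of $\M$ and $\K$ unchanged) Lemma~\ref{Bounding.B} delivers a faithful representation on a Hilbert space of dimension at most $2^{\aleph_0 \cdot \dim \H \cdot \dim \K}$. I would verify that the dimension bookkeeping collapses to $\kappa = 2^{\aleph_0 \cdot \dim\H \cdot \dim\K}$ as claimed, exactly as in the final paragraph of that proof.

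For the converse lower bound, the strategy is to identify $\B(\H)^{\ast\B(\K)}$ with the center-type algebra $\S$ appearing in Proposition~\ref{Bounding.C}, and then invoke the sharp count of irreducible representations from Lemma~\ref{Bounding.D}. Concretely, I would apply the universal property of Theorem~\ref{Free.A} with $\M = \B(\H)$ and $\N = \B(\K)$, feeding in the representation $\pi: \B(\H) \To \B(\L \tensor \K)$ constructed in Proposition~\ref{Bounding.C}. The computation in that proof shows that the minimum von Neumann algebra $\S \subsetof \B(\L)$ with $\pi(\B(\H)) \subsetof \S \stensor \B(\K)$ has center isomorphic to the center of $\B(\H) \ast \B(\K)$, and hence at least $2^{\aleph_0 \cdot \dim\H \cdot \dim\K}$ mutually inequivalent characters (equivalently, factor summands). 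Since $\M^{\ast\N}$ surjects onto every such $\S$ by the universal property, and since inequivalent irreducible representations of $\B(\H)\ast\B(\K)$ must be distinguished on $\M^{\ast\N}$, the free exponential cannot be faithfully represented on a Hilbert space supporting fewer than $2^{\aleph_0 \cdot \dim\H \cdot \dim\K}$ inequivalent irreducibles; a space of dimension below $2^{\aleph_0 \cdot \dim\H \cdot \dim\K}$ would not suffice.

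I expect the main obstacle to be the converse direction, specifically making precise the claim that a \emph{faithful} representation of $\B(\H)^{\ast\B(\K)}$ forces a large Hilbert space dimension. The count in Lemma~\ref{Bounding.D} is a count of irreducible representations up to unitary equivalence, whereas the proposition asserts a bound on the dimension of the representation space of a single faithful representation. I would bridge this gap by arguing that any faithful representation must be essentially equivalent to the universal normal representation (by the essential uniqueness of normal representations cited earlier, Lemma~\ref{Completely.B}), and that the universal representation, being the direct sum of GNS representations over all normal states, must contain a copy of each of the $2^{\aleph_0 \cdot \dim\H \cdot \dim\K}$ inequivalent irreducibles. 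The cardinality arithmetic then shows that the underlying Hilbert space has dimension at least $2^{\aleph_0 \cdot \dim\H \cdot \dim\K}$, which is the desired conclusion.
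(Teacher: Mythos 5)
Your proposal is correct and takes essentially the same route as the paper's own (very terse) proof: the upper bound is the minimality of $\M^{\ast\N}$ among von Neumann algebras $\R$ with $\varepsilon(\M)\subsetof\R\stensor\N$ combined with Lemma \ref{Bounding.B}, and the lower bound is Proposition \ref{Bounding.C} fed through the universal property. Note only that the gap you anticipate in the converse is already closed by the statement of Proposition \ref{Bounding.C}, which is phrased directly as a lower bound on the dimension of any faithful representation of the minimal algebra $\S$, so your additional bridge via Lemma \ref{Completely.B} and the universal normal representation is not needed (though it is harmless).
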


\begin{proof}
Clearly, $\M^{\ast \N}$ can have no proper $W^*$-subalgebras $\R \subset \M^{\ast \N}$ such that $\varepsilon(\M) \subsetof \R \stensor \N$. Apply Lemma \ref{Bounding.B}. The partial converse follows from Proposition \ref{Bounding.C}.
\end{proof}

\begin{definition}
For each $W^*$-algebra $\N$, let $(-)^{\ast \N}: \Wstar \To \Wstar$ be the functor defined as follows:
\begin{enumerate}
\item For every $W^*$-algebra $\M$, let $\M^{\ast \N}$ be defined by the construction in the proof of Theorem \ref{Free.A}, taking $\M \subsetof \B(\H)$ and $\N \subsetof \B(\K)$ to be the universal normal representations of these algebras.
\item For every morphism $\pi: \M_0 \To \M_1$, let $\pi^{\ast \N}: \M_0^{\ast\N} \To \M_1^{\ast \N}$ be the unique morphism such that $( \pi^{\ast \N} \stensor 1_\N) \circ \varepsilon_0 = \varepsilon_1 \circ \pi$, where $\varepsilon_i: \M_i \mapsto \M_i^{\ast \N} \stensor \N$, for $i = 0,1$, is the morphism constructed in the proof of Theorem \ref{Free.A}.
\end{enumerate}
\end{definition}

\begin{theorem}\label{Free.E}
For every $W^*$-algebra $\N$, the functor $(-)^{\ast \N}$ is left adjoint to the functor $- \stensor \N$. Thus, the monoidal category $(\o\Wstar, \stensor)$ is closed.
\end{theorem}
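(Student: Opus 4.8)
The plan is to establish the adjunction $\Hom_{\Wstar}(\M^{\ast\N}, \R) \iso \Hom_{\Wstar}(\M, \R \stensor \N)$ directly from the universal property of Theorem \ref{Free.A}, rather than re-proving anything. The universal property says precisely that every morphism $\pi: \M \To \R \stensor \N$ factors uniquely as $(\rho \stensor 1) \circ \varepsilon$ for a morphism $\rho: \M^{\ast\N} \To \R$. So first I would define the candidate natural bijection $\Phi_{\M,\R}: \Hom_{\Wstar}(\M^{\ast\N}, \R) \To \Hom_{\Wstar}(\M, \R \stensor \N)$ by $\Phi_{\M,\R}(\rho) = (\rho \stensor 1) \circ \varepsilon_\M$, where $\varepsilon_\M = \varepsilon_{\M,\N}$ is the universal morphism. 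The content of Theorem \ref{Free.A} is exactly that $\Phi_{\M,\R}$ is a bijection: surjectivity is the existence clause, and injectivity is the uniqueness clause.

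The bulk of the remaining work is checking naturality, which I would split into the two variances. Naturality in $\R$ means that for a morphism $\theta: \R_0 \To \R_1$, the square relating $\Phi_{\M,\R_0}$ and $\Phi_{\M,\R_1}$ commutes; unwinding, this requires $((\theta \circ \rho) \stensor 1)\circ \varepsilon_\M = (\theta \stensor 1) \circ (\rho \stensor 1) \circ \varepsilon_\M$, which is immediate from functoriality of $\stensor$ in its first argument, i.e., $(\theta\circ\rho)\stensor 1 = (\theta \stensor 1)\circ(\rho \stensor 1)$. Naturality in $\M$ is the genuinely substantive direction: for a morphism $\pi: \M_0 \To \M_1$, one must relate $\Phi$ across the functor $(-)^{\ast\N}$. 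Here I would invoke the defining equation of $\pi^{\ast\N}$ from the functor definition, namely $(\pi^{\ast\N}\stensor 1)\circ \varepsilon_{\M_0} = \varepsilon_{\M_1} \circ \pi$. Chasing a morphism $\rho: \M_1^{\ast\N}\To\R$ through both paths and applying this identity together with functoriality of $\stensor$ shows the required square commutes. It is worth noting that this naturality in $\M$ is essentially what makes $(-)^{\ast\N}$ a well-defined functor in the first place, so the work is largely bookkeeping once the functor definition is in hand.

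The hardest conceptual point, and the one I would flag explicitly, is that the adjunction should also be checked to be natural in $\N$ if one wants the full strength of ``the monoidal category is closed,'' but for the stated conclusion it suffices to exhibit, for each fixed $\N$, a right adjoint to $(-)^{\ast\N}$; by definition of closedness of a symmetric monoidal category, having a right adjoint to each $(-)^{\ast\N} = -\stensor\N$ on the level of objects and morphisms, compatibly, is exactly the assertion. Since $\stensor$ is symmetric, $-\stensor \N \iso \N \stensor -$, so a left adjoint to $-\stensor\N$ furnishes closedness in the sense required. Thus the final sentence follows formally once the adjunction $(-)^{\ast\N} \dashv (-\stensor\N)$ is in place. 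The only genuine obstacle is the naturality-in-$\M$ verification, and that is dispatched by the functor definition; everything else is a direct transcription of Theorem \ref{Free.A}'s universal property into the language of adjoint functors.
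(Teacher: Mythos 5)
Your argument is correct and follows essentially the same route as the paper: the bijection $\rho \mapsto (\rho \stensor 1)\circ\varepsilon$ is read off from the universal property of Theorem \ref{Free.A}, and naturality is verified using functoriality of $\stensor$ together with the defining equation $(\pi^{\ast\N}\stensor 1)\circ\varepsilon_{\M_0} = \varepsilon_{\M_1}\circ\pi$ of the functor $(-)^{\ast\N}$ (the paper merely packages the two naturality squares into a single calculation). The only blemish is the garbled identification ``$(-)^{\ast\N} = -\stensor\N$'' in your last paragraph --- what you mean is that a left adjoint to $-\stensor\N$ in $\Wstar$ is a right adjoint to $-\stensor\N$ in $\o\Wstar$, which is what closedness of $(\o\Wstar,\stensor)$ requires.
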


\begin{proof}
The universal property of $\varepsilon: \M \To  \M^{\ast \N} \stensor \N$ can be rephrased to say that for every $W^*$-algebra $\R$, the function $f_{\M, \R}: \Hom(\M^{\ast \N}, \R) \To \Hom(\M, \R \stensor \N)$ defined by $\rho \mapsto (\rho \stensor 1)\circ \varepsilon$ is a bijection. It therefore remains to show that this function is a natural transformation of functors $\o \Wstar \times \Wstar \To \Set$, i.e., that for any pair of morphisms, $\pi: \M_1 \To \M_0$ and $\phi: \R_0 \To \R_1$, we have that $\Hom(\pi, \phi \stensor \N) \circ f_{\M_0, \R_0} = f_{\M_1, \R_1} \circ \Hom(\pi^{\ast \N}, \phi)$ as functions $\Hom(\M_0^{\ast \N}, \R_0) \To \Hom(\M_1,  \R_1 \stensor \N)$. This follows from the following calculation, valid for any $\rho_0 \in \Hom(\M_0^{\ast \N}, \R_0)$:
\begin{align*}
[\Hom(\pi, \phi \stensor \N) \circ f_{\M_0, \R_0}](\rho_0)&  = \Hom(\pi, \phi\stensor \N) (( \rho_0\stensor 1)\circ \varepsilon_0) \\& = (\phi \stensor 1) \circ  (\rho_0\stensor1) \circ \varepsilon_0 \circ \pi   = ((\phi  \circ \rho_0\circ \pi^{\ast \N}) \stensor 1) \circ \varepsilon_1 \\& = [f_{\M_1,\R_1} \circ \Hom(\pi^{\ast\N}, \phi)](\rho_0).
\end{align*}
 \end{proof}
 
The results of the rest of this section are all more or less corollaries of Theorem \ref{Free.E} above, so the reader who is familiar with closed symmetric monoidal categories will find little new here. Therefore, what follows is intended primarily as a discussion of this setting.
 
Recall that left adjoints preserve colimits and right adjoints preserve limits. Thus, we obtain our first corollary:

\begin{corollary}\label{Free.F}
The functor $(-)^{\ast \N}$ preserves colimits. The functor $- \stensor \N $ preserves limits.
\end{corollary}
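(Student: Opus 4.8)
The plan is to invoke the general categorical fact that left adjoints preserve colimits and right adjoints preserve limits, since Corollary \ref{Free.F} is stated immediately after Theorem \ref{Free.E} established the adjunction $(-)^{\ast \N} \dashv - \stensor \N$. This is the cleanest route, because the hard analytic work has already been done in proving that the adjunction exists; once we know $(-)^{\ast \N}$ is a left adjoint, its colimit-preservation is automatic, and dually $- \stensor \N$ preserves limits because it is a right adjoint.

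First I would recall the statement of the adjunction from Theorem \ref{Free.E}: for each fixed $\N$, the functor $(-)^{\ast \N}: \Wstar \To \Wstar$ is left adjoint to $- \stensor \N: \Wstar \To \Wstar$, witnessed by the natural bijection $\Hom(\M^{\ast \N}, \R) \iso \Hom(\M, \R \stensor \N)$. Then I would cite the standard theorem (for instance, from MacLane \cite{MacLane71}, which the paper has already used) that every functor possessing a right adjoint preserves all colimits that exist in the domain, and dually every functor possessing a left adjoint preserves all limits that exist. Applying the first of these to $(-)^{\ast \N}$ yields that it preserves colimits, and applying the second to $- \stensor \N$ yields that it preserves limits. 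Since the paper has already shown in Section 5 that $\Wstar$ has all small limits and colimits, these preservation statements are nonvacuous.

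There is essentially no obstacle here beyond correctly quoting the general principle; the real content lies entirely in Theorem \ref{Free.E}. The only point requiring a moment's care is the direction of the arrows: one must be careful that $(-)^{\ast \N}$ is the \emph{left} adjoint and $- \stensor \N$ the \emph{right} adjoint, so that it is the \emph{left} adjoint $(-)^{\ast \N}$ that preserves colimits and the \emph{right} adjoint $- \stensor \N$ that preserves limits. Given the paper's convention of working formally in $\Wstar$ rather than $\o \Wstar$, no sign-flip or opposite-category bookkeeping is needed, since the adjunction is stated directly in $\Wstar$. Thus the entire proof reduces to a one-line appeal to the adjoint functor theorem's preservation corollary.
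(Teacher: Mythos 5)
Your proposal is correct and is exactly the paper's argument: the corollary is deduced immediately from the adjunction $(-)^{\ast\N} \dashv -\stensor\N$ of Theorem \ref{Free.E} together with the standard fact that left adjoints preserve colimits and right adjoints preserve limits. No further comment is needed.
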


In particular, this means that for any indexed family $\{\M_\alpha\}$ of $W^*$-algebras, $(\Ast_\alpha \M_\alpha)^{\ast \N} \iso \Ast_\alpha \M_\alpha^{\ast \N}$, and $\N \stensor (\bigoplus_\alpha \M_\alpha) \iso \bigoplus_\alpha \N \stensor \M_\alpha$. While the latter claim won't surprise anyone, the former is sufficiently reassuring that we'll state it formally:

\begin{corollary}\label{Free.G}
Let $\{\M_\alpha\}$ be an indexed family of $W^*$-algebras. Then $(\Ast_\alpha \M_\alpha)^{\ast \N} \iso \Ast_\alpha \M_\alpha^{\ast \N}$.
\end{corollary}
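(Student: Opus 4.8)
The plan is to derive this as a direct corollary of Corollary \ref{Free.F}, which states that the functor $(-)^{\ast \N}$ preserves colimits. Since the free product $\Ast_\alpha \M_\alpha$ is by definition the coproduct of the family $\{\M_\alpha\}$ in $\Wstar$ (as established in Section 5), and coproducts are a special case of colimits, the functor $(-)^{\ast \N}$ must carry the coproduct $\Ast_\alpha \M_\alpha$ to the coproduct of the images $\{\M_\alpha^{\ast \N}\}$, which is precisely $\Ast_\alpha \M_\alpha^{\ast \N}$. This yields the claimed isomorphism immediately.

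First I would recall precisely what it means for a functor to preserve coproducts: the functor sends the universal cocone $\{\iota_\alpha : \M_\alpha \To \Ast_\beta \M_\beta\}$ to a cocone $\{\iota_\alpha^{\ast \N} : \M_\alpha^{\ast \N} \To (\Ast_\beta \M_\beta)^{\ast \N}\}$ that is again universal, i.e., is itself a coproduct of the family $\{\M_\alpha^{\ast \N}\}$. Because any two coproducts of the same family are canonically isomorphic, and because $\Ast_\alpha \M_\alpha^{\ast \N}$ is by definition a coproduct of $\{\M_\alpha^{\ast \N}\}$, I obtain the canonical isomorphism $(\Ast_\alpha \M_\alpha)^{\ast \N} \iso \Ast_\alpha \M_\alpha^{\ast \N}$. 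So the proof is essentially a one-line appeal to the preservation of colimits by a left adjoint, together with the identification of the free product as the coproduct.

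Strictly speaking there is no hard part here, since all the work has been done in establishing that $(-)^{\ast \N}$ is a left adjoint (Theorem \ref{Free.E}) and that left adjoints preserve colimits (Corollary \ref{Free.F}). The only point that deserves a word of care is the observation that the free product is genuinely the categorical coproduct in $\Wstar$, so that the abstract statement about colimit preservation specializes correctly; but this is exactly the content of the relevant definition and proposition in Section 5, which I may assume. Thus the proof reduces to citing Corollary \ref{Free.F} and noting that the coproduct of $\{\M_\alpha^{\ast \N}\}$ is the free product $\Ast_\alpha \M_\alpha^{\ast \N}$.

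If one wished to be fully explicit rather than invoking the general machinery, one could instead verify the universal property directly: given morphisms $\rho_\alpha : \M_\alpha^{\ast \N} \To \R$, one would assemble the compatible family $(\rho_\alpha \stensor 1)\circ \varepsilon_\alpha : \M_\alpha \To \R \stensor \N$, use the universal property of the free product $\Ast_\alpha \M_\alpha$ to obtain a single morphism $\M_\alpha \To \R \stensor \N$ out of the free product, and then apply the universal property of the free exponential to factor it through $(\Ast_\alpha \M_\alpha)^{\ast \N}$. Uniqueness at each stage would give the desired bijection $\Hom((\Ast_\alpha \M_\alpha)^{\ast \N}, \R) \iso \prod_\alpha \Hom(\M_\alpha^{\ast \N}, \R)$ naturally in $\R$. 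But since this merely reconstructs by hand the abstract fact that left adjoints preserve coproducts, I would favor the short argument via Corollary \ref{Free.F}.
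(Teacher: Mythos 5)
Your proposal is correct and matches the paper exactly: the paper derives Corollary \ref{Free.G} as an immediate consequence of Corollary \ref{Free.F}, noting that the free product is the coproduct in $\Wstar$ and that the left adjoint $(-)^{\ast \N}$ preserves colimits. No further comment is needed.
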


What about the other familiar laws of exponentiation? We might for example expect that $(\M^{\ast \N})^{\ast\R} \iso\M^{\ast (\N \ast \R)}$. A small amount of abstract nonsense involving the universal property of the free exponential instead reveals the following:

\begin{corollary}
$(\M^{\ast \N})^{\ast\R} \iso \M^{\ast (\R \stensor \N)}$
\end{corollary}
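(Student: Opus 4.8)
The plan is to prove the isomorphism not by exhibiting it directly, but by deducing it from the adjunction of Theorem \ref{Free.E} together with the associativity of the spatial tensor product, and then invoking the Yoneda lemma. Recall that Theorem \ref{Free.E} provides, for each $W^*$-algebra $\N$, a natural bijection $f_{\M,\R}\colon \Hom(\M^{\ast \N},\R)\iso \Hom(\M,\R\stensor\N)$, natural in $\M$ (contravariantly) and in $\R$ (covariantly). I would fix an arbitrary test object $\S$ and assemble a chain of isomorphisms of $\Hom$-sets that is natural in $\S$, so that the two outer objects represent the same functor.

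Concretely, for every $W^*$-algebra $\S$ I would write
\begin{align*}
\Hom\bigl((\M^{\ast \N})^{\ast \R},\,\S\bigr)
&\iso \Hom\bigl(\M^{\ast \N},\,\S\stensor\R\bigr) \\
&\iso \Hom\bigl(\M,\,(\S\stensor\R)\stensor\N\bigr) \\
&\iso \Hom\bigl(\M,\,\S\stensor(\R\stensor\N)\bigr) \\
&\iso \Hom\bigl(\M^{\ast(\R\stensor\N)},\,\S\bigr).
\end{align*}
The first step is the adjunction for $(-)^{\ast \R}\dashv -\stensor\R$; the second is the adjunction for $(-)^{\ast \N}\dashv -\stensor\N$ applied to the object $\S\stensor\R$; the third is postcomposition of $\Hom(\M,-)$ with the associator $a_{\S,\R,\N}\colon (\S\stensor\R)\stensor\N \iso \S\stensor(\R\stensor\N)$ from the symmetric monoidal structure established earlier; and the fourth is the adjunction for $(-)^{\ast(\R\stensor\N)}\dashv -\stensor(\R\stensor\N)$.

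Each of these four isomorphisms is natural in $\S$: the adjunction bijections are natural in their covariant argument, the functoriality of $-\stensor\R$ converts naturality in $\S\stensor\R$ into naturality in $\S$, and the associator is a natural transformation in all three of its arguments, hence in particular in $\S$. Composing them yields a natural isomorphism of representable functors $\Hom\bigl((\M^{\ast \N})^{\ast \R},-\bigr)\iso \Hom\bigl(\M^{\ast(\R\stensor\N)},-\bigr)$ on $\Wstar$, and the Yoneda lemma then produces a canonical isomorphism $(\M^{\ast \N})^{\ast \R}\iso \M^{\ast(\R\stensor\N)}$ of the representing objects.

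The argument is essentially pure bookkeeping, so the ``hard part'' is only vigilance about two points. First, one must track the order of the tensor factors through the associator: applying the adjunctions as above lands the result on $\R\stensor\N$ (in that order), matching the statement, rather than on $\N\stensor\R$; one could alternatively reconcile the two orders using the symmetry $b$, but it is cleaner simply to use associativity and record the factors in the stated order. Second, one must confirm that naturality is preserved at the tensoring step, which is where the monoidal coherence of $(\Wstar,\stensor)$ is actually used. Since the adjunction isomorphism is the explicit map $\rho\mapsto(\rho\stensor 1)\circ\varepsilon$, it is worth remarking that the resulting isomorphism is compatible with the respective evaluation morphisms, making it genuinely canonical.
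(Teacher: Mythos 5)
Your proof is correct and is precisely the ``small amount of abstract nonsense involving the universal property of the free exponential'' that the paper alludes to for this corollary: a chain of natural $\Hom$-set bijections using the adjunction of Theorem \ref{Free.E} twice, the associator of the spatial tensor product, and the Yoneda lemma, exactly in the style the paper uses for the neighboring corollary on $\M^{\ast(\N_1\oplus\N_2)}$. Your care with the order of the tensor factors correctly recovers $\R\stensor\N$ as in the statement.
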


The final such property follows from Yoneda's lemma. For fixed $W^*$-algebras $\M$, $\N_1$ and $\N_2$, we obtain the following familiar sequence of natural transformations:
\begin{align*}\Hom(\M^{\ast \N_1  \oplus \N_2}&, \R)  \iso \Hom(\M, \R \stensor (\N_1\oplus \N_2)) \iso \Hom (\M, (\R \stensor \N_1) \oplus (\R \stensor \N_2)) \\ & \iso \Hom(\M,  \R \stensor \N_1) \times \Hom(\M, \R \stensor \N_2) \\& \iso \Hom(\M^{\ast \N_1}, \R) \times \Hom(\M^{\ast \N_2}, \R) \iso \Hom(\M^{\ast\N_1}\ast \M^{\ast\N_2}, \R)
\end{align*}
The natural transformations that got us to and from the Cartesian product of Hom sets arise from the the universal properties of $\oplus$ and $\ast$ respectively. The same proof works for any collection of $W^*$-algebras.

\begin{corollary}\label{Free.I}
Let $\{\N_\alpha\}$ be an indexed family of $W^*$-algebras. Then $\M^{\ast(\bigoplus_\alpha \N_\alpha)} \iso \Ast_\alpha \M^{\ast \N_\alpha}$.
\end{corollary}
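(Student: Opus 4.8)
The plan is to follow exactly the route sketched in the excerpt for the two-factor case, reading the binary product $\oplus$ and binary coproduct $\ast$ as arbitrary ones, and to finish with the Yoneda lemma. Concretely, I would fix an arbitrary $W^*$-algebra $\R$ and produce a chain of bijections
\[
\Hom(\M^{\ast(\bigoplus_\alpha \N_\alpha)}, \R) \iso \Hom(\Ast_\alpha \M^{\ast \N_\alpha}, \R),
\]
natural in $\R$, assembled from five isomorphisms in order: (i) the adjunction of Theorem \ref{Free.E}, giving $\Hom(\M^{\ast(\bigoplus_\alpha \N_\alpha)}, \R) \iso \Hom(\M, \R \stensor \bigoplus_\alpha \N_\alpha)$; (ii) the distributive law $\R \stensor \bigoplus_\alpha \N_\alpha \iso \bigoplus_\alpha (\R \stensor \N_\alpha)$; (iii) the universal property of the product, $\Hom(\M, \bigoplus_\alpha(\R \stensor \N_\alpha)) \iso \prod_\alpha \Hom(\M, \R \stensor \N_\alpha)$; (iv) the adjunction applied in each factor, $\prod_\alpha \Hom(\M, \R \stensor \N_\alpha) \iso \prod_\alpha \Hom(\M^{\ast \N_\alpha}, \R)$; and (v) the universal property of the coproduct $\Ast$, $\prod_\alpha \Hom(\M^{\ast \N_\alpha}, \R) \iso \Hom(\Ast_\alpha \M^{\ast \N_\alpha}, \R)$.

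Step (ii) is where I expect the only real work, and it is the main obstacle in the sense that it is the one link not handed to us directly. Corollary \ref{Free.F} tells us that $- \stensor \R$ preserves limits, hence products; thus the cone $\{p_\alpha \stensor 1_\R\}$ exhibits $(\bigoplus_\alpha \N_\alpha) \stensor \R$ as the product of the $\N_\alpha \stensor \R$. Composing with the symmetry isomorphism $b$ of the monoidal structure reinterprets this as saying that $\{1_\R \stensor p_\alpha\}$ exhibits $\R \stensor \bigoplus_\alpha \N_\alpha$ as the product of the $\R \stensor \N_\alpha$, which yields the canonical isomorphism to $\bigoplus_\alpha(\R \stensor \N_\alpha)$. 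Since the projections $1_\R \stensor p_\alpha$ are visibly natural in $\R$, so is this isomorphism; that naturality is the point one must check carefully, as the entire argument is only useful once every link is natural in $\R$.

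It then remains to record naturality of the other links: (i) and (iv) by naturality of the adjunction isomorphism of Theorem \ref{Free.E}, and (iii) and (v) by naturality of the universal properties of $\bigoplus$ and $\Ast$. The composite is therefore a natural isomorphism of covariant functors $\Hom(\M^{\ast(\bigoplus_\alpha \N_\alpha)}, -) \iso \Hom(\Ast_\alpha \M^{\ast \N_\alpha}, -)$ from $\Wstar$ to $\Set$, and the Yoneda lemma supplies a canonical isomorphism $\M^{\ast(\bigoplus_\alpha \N_\alpha)} \iso \Ast_\alpha \M^{\ast \N_\alpha}$. Apart from the verification in step (ii), this is a mechanical transport of the finite-factor computation already displayed in the excerpt, so I would keep the write-up brief.
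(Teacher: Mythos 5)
Your proposal is correct and is essentially the paper's own argument: the paper proves the two-factor case by exactly this five-link chain of natural bijections (adjunction, distributivity of $\stensor$ over $\oplus$ via Corollary \ref{Free.F}, the universal properties of $\bigoplus$ and $\Ast$, and Yoneda) and then remarks that the same proof works for arbitrary families. Your extra care over step (ii) and the symmetry isomorphism only makes explicit what the paper leaves implicit.
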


We now have an opportunity to justify the notation $\M^{\ast \N}$, by combining the above corollary with the simple observation that $\M^{\ast \CC} \iso \M$.

\begin{corollary}
Let $X$ be a set. Then $\M^{\ast\ell^\infty(X)} \iso \Ast_{x \in X} \M$.
\end{corollary}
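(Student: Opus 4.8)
The plan is to deduce this immediately from the two facts just assembled: the isomorphism $\M^{\ast\CC}\iso\M$ and Corollary \ref{Free.I}. The only genuine content is the observation that $\ell^\infty(X)$ is \emph{literally} a direct sum of copies of $\CC$ in $\Wstar$. Recall from Section 5 that the direct sum of an indexed family $\{\M_\alpha\}_{\alpha\in I}$ is constructed as the $C^*$-algebra of bounded sections $m\colon I\To\bigcup_\alpha\M_\alpha$ with $m(\alpha)\in\M_\alpha$ and $\sup_\alpha\|m(\alpha)\|<\infty$, under coordinatewise operations and the supremum norm. Specializing to $I=X$ and $\M_x=\CC$ for every $x\in X$, this is exactly the algebra of bounded functions $X\To\CC$ with pointwise operations and the supremum norm, i.e., $\ell^\infty(X)$. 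Thus $\ell^\infty(X)=\bigoplus_{x\in X}\CC$ on the nose.

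With this identification in hand, I would apply Corollary \ref{Free.I} with $\N_x=\CC$ for each $x\in X$, which yields
\[
\M^{\ast\ell^\infty(X)} \;=\; \M^{\ast\left(\bigoplus_{x\in X}\CC\right)} \;\iso\; \Ast_{x\in X}\M^{\ast\CC}.
\]
Finally, invoking the observation $\M^{\ast\CC}\iso\M$ factorwise, the functoriality of the free product gives $\Ast_{x\in X}\M^{\ast\CC}\iso\Ast_{x\in X}\M$, and composing the isomorphisms completes the chain $\M^{\ast\ell^\infty(X)}\iso\Ast_{x\in X}\M$.

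The argument has no real obstacle; it is a bookkeeping exercise in the universal properties already established. The two points worth confirming are both routine: first, the set-level identity $\ell^\infty(X)=\bigoplus_{x\in X}\CC$ described above; and second, that the isomorphism $\M^{\ast\CC}\iso\M$ is precisely the one furnished by the defining universal property of the free exponential, since $-\stensor\CC$ is naturally the identity functor via the unit isomorphism $c_\M\colon\CC\stensor\M\iso\M$, so that $\M^{\ast\CC}$ corepresents $\R\mapsto\Hom(\M,\R\stensor\CC)\iso\Hom(\M,\R)$ and is therefore canonically $\M$. I would remark that the resulting isomorphism is natural in $\M$, so the corollary in fact exhibits $(-)^{\ast\ell^\infty(X)}$ as the $X$-fold free-product functor, in keeping with the reading of $\o\M^{\ast\N}$ as the collection of functions from $\o\N$ to $\o\M$ and of $\o\ell^\infty(X)$ as the set $X$.
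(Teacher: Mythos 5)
Your proof is correct and follows exactly the route the paper intends: identify $\ell^\infty(X)$ with $\bigoplus_{x\in X}\CC$, apply the identity $\M^{\ast(\bigoplus_\alpha\N_\alpha)}\iso\Ast_\alpha\M^{\ast\N_\alpha}$, and use $\M^{\ast\CC}\iso\M$. The paper gives no further detail, so your spelled-out verification of the two routine identifications is, if anything, more complete than the original.
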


Using the universal property of free exponentials, we can define a contravariant functor $\M^{\ast (-)}$. By Yoneda's lemma, it is the unique functor such that the canonical bijection $\Hom(\M^{\ast \N}, \R) \iso \Hom(\M, \N \stensor \R)$ is natural in $\N$, as well as  in $\M$ and $\R$. The calculation $\Hom(\M^{\ast \N}, \R) \iso \Hom(\M , \N \stensor \R) \iso \Hom(\M, \R \stensor \N) \iso \Hom(\M^{\ast \R}, \N)$ shows that curiously the functor $\M^{\ast (-)}$ is its own adjoint (it's contravariant). Therefore it sends all limits  to colimits.

\begin{corollary}
The assignment $\N \mapsto \M^{\ast \N}$ extends uniquely to a contravariant functor such that the canonical bijection $\Hom(\M^{\ast \N}, \R) \iso \Hom(\M, \R \stensor \N)$ is natural in $\N$. It sends limits to the corresponding colimits.
\end{corollary}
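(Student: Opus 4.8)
The plan is to apply Yoneda's lemma twice: once to build the functor and verify uniqueness, and once more to deduce that it carries limits to colimits, leaning throughout on the adjunction of Theorem \ref{Free.E} and on the symmetric monoidal structure. The action on objects $\N \mapsto \M^{\ast \N}$ is already given, so only the action on morphisms and its properties remain.

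First I would construct the action on morphisms. Fix $\M$, and let $\phi: \N_1 \To \N_2$ be a morphism. For each $\R$, post-composition with $1_\R \stensor \phi: \R \stensor \N_1 \To \R \stensor \N_2$ gives a map $\Hom(\M, \R \stensor \N_1) \To \Hom(\M, \R \stensor \N_2)$, and transporting along the bijection of Theorem \ref{Free.E} produces a map $t_\R: \Hom(\M^{\ast \N_1}, \R) \To \Hom(\M^{\ast \N_2}, \R)$. These maps are natural in $\R$: the Theorem \ref{Free.E} bijections are natural in $\R$, and for $\psi: \R \To \R'$ the identity $(\psi \stensor 1_{\N_2}) \circ (1_\R \stensor \phi) = \psi \stensor \phi = (1_{\R'} \stensor \phi) \circ (\psi \stensor 1_{\N_1})$, which is just the bifunctoriality of $\stensor$, makes the relevant square commute. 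Yoneda's lemma then yields a unique morphism $\M^{\ast \phi}: \M^{\ast \N_2} \To \M^{\ast \N_1}$ inducing $\{t_\R\}$, and its defining naturality square is exactly the assertion that $\Hom(\M^{\ast \N}, \R) \iso \Hom(\M, \R \stensor \N)$ is natural in $\N$. Functoriality ($\M^{\ast 1} = 1$ and $\M^{\ast(\phi' \circ \phi)} = \M^{\ast \phi} \circ \M^{\ast \phi'}$) follows by the uniqueness clause of Yoneda, since both sides induce the same natural transformation. Uniqueness of the whole functor is likewise immediate: the naturality square pins down each $\M^{\ast \phi}$, so any functor satisfying the naturality requirement coincides with this one.

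Next I would record the self-adjunction by composing the Theorem \ref{Free.E} bijection with the symmetry $b_{\R, \N}: \R \stensor \N \iso \N \stensor \R$ of the monoidal structure, giving $\Hom(\M^{\ast \N}, \R) \iso \Hom(\M, \R \stensor \N) \iso \Hom(\M, \N \stensor \R) \iso \Hom(\M^{\ast \R}, \N)$, natural in $\N$ and $\R$, exactly as flagged in the discussion preceding the statement. Then, to obtain the limit--colimit claim, let $\N = \lim_\alpha \N_\alpha$ in $\Wstar$ (which is complete and cocomplete by the results of Section 5). For every $\R$, the self-adjunction, the fact that $\Hom(A, -)$ preserves limits, and a second application of the self-adjunction give
$$\Hom(\M^{\ast \N}, \R) \iso \Hom(\M^{\ast \R}, \lim_\alpha \N_\alpha) \iso \lim_\alpha \Hom(\M^{\ast \R}, \N_\alpha) \iso \lim_\alpha \Hom(\M^{\ast \N_\alpha}, \R).$$
Since $\Hom$ turns a colimit in its first argument into a limit, the right-hand side is $\Hom(\operatorname{colim}_\alpha \M^{\ast \N_\alpha}, \R)$, where the colimit runs over the diagram obtained by reversing the arrows of $\{\N_\alpha\}$. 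Naturality in $\R$ and Yoneda then give $\M^{\ast \N} \iso \operatorname{colim}_\alpha \M^{\ast \N_\alpha}$.

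I expect no serious obstacle: the argument is entirely formal, with no analytic input beyond Theorem \ref{Free.E}. The only points demanding care are bookkeeping ones---tracking the contravariance so that limits (not colimits) appear on the correct side, and invoking the bifunctoriality of $\stensor$ and the naturality of the symmetry $b$ to justify the naturality squares---so the main (modest) hurdle is simply keeping all the variances and directions consistent.
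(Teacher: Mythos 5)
Your proposal is correct and follows essentially the same route as the paper: the paper defines $\M^{\ast(-)}$ and establishes its uniqueness via Yoneda's lemma applied to the naturality of $\Hom(\M^{\ast \N}, \R) \iso \Hom(\M, \R \stensor \N)$, and then derives the limit-to-colimit claim from the self-adjunction $\Hom(\M^{\ast \N}, \R) \iso \Hom(\M^{\ast \R}, \N)$, exactly as you do. Your write-up merely fills in the Yoneda bookkeeping that the paper leaves implicit.
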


Note that the isomorphisms of Corollaries \ref{Free.G}-\ref{Free.I} are natural in every variable.

We wish to draw an analogy between the functor $\Wstar \times \o\Wstar \To \o\Wstar$ given by $(\M, \o\N) \mapsto \o\M^{\ast \N}$, and the $\Hom$ functor $ \Wstar \times \o\Wstar \To \Set$. The adjunction $\Hom (\M^{\ast\N}, \CC)\iso \Hom(\M, \N \stensor \CC) \iso \Hom(\M, \N)$ establishes a canonical bijection between the atoms of $\o\M^{\ast \N}$ and the functions from $\o \N$ to  $\o\M$. Thus, $\o \M^{\ast \N}$ consists of $\Hom(\o \N,\o \M)$ together with other functions that cannot be individually distinguished. We formally state this correspondence.

\begin{corollary}
$\Hom (\M^{\ast\N}, \CC)\iso \Hom(\M, \N)$ 
\end{corollary}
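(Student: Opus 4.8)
The plan is to read off the statement directly from the defining universal property of the free exponential, specializing the codomain $\R$ to the unit object $\CC$. By Theorem \ref{Free.E}, for every $W^*$-algebra $\R$ there is a canonical bijection
\begin{equation*}
f_{\M,\R}\colon \Hom(\M^{\ast\N},\R) \To \Hom(\M, \R \stensor \N), \qquad \rho \mapsto (\rho \stensor 1)\circ\varepsilon,
\end{equation*}
natural in $\R$ (and in $\M$). First I would set $\R = \CC$, which yields a bijection $\Hom(\M^{\ast\N},\CC) \iso \Hom(\M, \CC \stensor \N)$. The only remaining work is to identify the right-hand side with $\Hom(\M,\N)$.

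To do this, I would invoke the unitor isomorphism $c_\N\colon \CC \stensor \N \iso \N$ from the symmetric monoidal structure established in Section \ref{The Spatial Tensor Product}. Post-composition with $c_\N$ induces a bijection $\Hom(\M, \CC \stensor \N) \iso \Hom(\M,\N)$, since $c_\N$ is an isomorphism of $W^*$-algebras and $\Hom(\M,-)$ therefore carries it to a bijection on morphism sets. Composing the two displayed bijections gives the desired canonical bijection $\Hom(\M^{\ast\N},\CC) \iso \Hom(\M,\N)$.

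There is essentially no obstacle here: the corollary is a one-line specialization of Theorem \ref{Free.E} together with the unit law $\CC \stensor \N \iso \N$, which is exactly the ingredient already used in the preceding discussion to compute $\M^{\ast\CC} \iso \M$ and in the adjunction displays leading up to this statement. The only thing worth remarking is that the naturality of $f_{\M,\R}$ in $\R$ (proved in Theorem \ref{Free.E}) and the naturality of the unitor guarantee that the resulting bijection is canonical, matching the phrasing of the surrounding corollaries; I would note this explicitly rather than belabor it.
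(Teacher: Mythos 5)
Your proposal is correct and matches the paper's own (implicit) argument exactly: the paper derives this corollary from the display $\Hom(\M^{\ast\N},\CC)\iso\Hom(\M,\N\stensor\CC)\iso\Hom(\M,\N)$ immediately preceding it, i.e., by specializing the adjunction of Theorem \ref{Free.E} to $\R=\CC$ and applying the unitor. Nothing further is needed.
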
 

Let us write $\gamma_\pi: \M^{\ast\N} \To \CC$ for the character corresponding to a morphism $\pi: \N \To \M$. In particular, for every $W^*$-algebra $\M$, we have the character $\gamma_\M = \gamma_{1_\M}: \M^{\ast \M} \To \CC$ corresponding to the identity morphism. It is the counit of the canonical cocomposition operation defined as follows:

\begin{definition}
For each fixed $W^*$-algebra $\N$, \emph{cocomposition} is the natural transformation
$$\kappa_{\M, \N, \R}: \M^{\ast \R} \To \M^{\ast \N} \stensor \N^{\ast \R}$$ defined to be the unique map such that $(\kappa_{\M, \N, \R} \stensor 1) \circ \varepsilon_{\M, \R} = (1 \stensor \varepsilon_{\N, \R}) \circ \varepsilon_{\M, \N}$.
\end{definition}

We claim the obvious desirable properties:

\begin{corollary}
Let $\M$, $\N$, $\R$, and $\S$ be $W^*$-algebras.
\begin{enumerate}
\item
Cocomposition is coassociative in the sense that
$$(\kappa_{\M, \N, \R} \stensor 1) \circ \kappa_{\M, \R, \S} = (1 \stensor \kappa_{\N, \R, \S}) \circ \kappa_{\M, \N, \S}.$$
Both are morphisms $\M^{\ast \S} \To \M^{\ast \N} \stensor \N^{\ast \R} \stensor \R^{\ast \S} $.

\item
The identity characters are the counits of cocomposition in the sense that the morphisms 
$ (\gamma _\M \stensor 1) \circ \kappa_{\M, \M, \N}$ and $(1 \stensor \gamma_\N) \circ \kappa_{\M, \N, \N}$ are both equal to the identity morphism $1: \M^{\ast \N} \To \M^{\ast \N}$.
\end{enumerate}
\end{corollary}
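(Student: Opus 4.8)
The plan is to prove both statements by purely formal manipulation, invoking only the uniqueness clause of the universal property of $\varepsilon_{\M,\S}$ (respectively $\varepsilon_{\M,\N}$) together with the bifunctoriality of $\stensor$. No operator theory is needed: these are the standard comonoid-type laws enjoyed by internal composition in any closed symmetric monoidal category, here specialized to $(\o\Wstar,\stensor)$. For coassociativity I would first note that both composites $L=(\kappa_{\M,\N,\R}\stensor 1)\circ\kappa_{\M,\R,\S}$ and $R=(1\stensor\kappa_{\N,\R,\S})\circ\kappa_{\M,\N,\S}$ are morphisms $\M^{\ast\S}\To\M^{\ast\N}\stensor\N^{\ast\R}\stensor\R^{\ast\S}$. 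By Theorem \ref{Free.A} a morphism out of $\M^{\ast\S}$ is determined by the morphism $\M\To(-)\stensor\S$ it induces, so it suffices to check that $(L\stensor 1_\S)\circ\varepsilon_{\M,\S}=(R\stensor 1_\S)\circ\varepsilon_{\M,\S}$.

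The key computation reduces each side to a common normal form. Beginning with $(L\stensor 1_\S)\circ\varepsilon_{\M,\S}$, I would substitute the defining equation of $\kappa_{\M,\R,\S}$, push $\kappa_{\M,\N,\R}$ across the $\varepsilon_{\R,\S}$ tensor factor by the interchange law, and then substitute the defining equation of $\kappa_{\M,\N,\R}$. This produces
\[
(L\stensor 1_\S)\circ\varepsilon_{\M,\S}=\bigl(1_{\M^{\ast\N}}\stensor[(1_{\N^{\ast\R}}\stensor\varepsilon_{\R,\S})\circ\varepsilon_{\N,\R}]\bigr)\circ\varepsilon_{\M,\N}.
\]
Running the same three substitutions on $R$, now unfolding $\kappa_{\M,\N,\S}$ first and $\kappa_{\N,\R,\S}$ second, yields the identical right-hand side, so $L=R$ by uniqueness.

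For the counit identities I would argue analogously, using instead the uniqueness clause for $\varepsilon_{\M,\N}$ and the defining property of $\gamma_\M$, namely that $(\gamma_\M\stensor 1_\M)\circ\varepsilon_{\M,\M}$ is the canonical isomorphism $\M\iso\CC\stensor\M$. Tensoring $(\gamma_\M\stensor 1)\circ\kappa_{\M,\M,\N}$ with $1_\N$, precomposing with $\varepsilon_{\M,\N}$, unfolding the definition of $\kappa_{\M,\M,\N}$, and applying the interchange law collapses the expression, via $(\gamma_\M\stensor 1_\M)\circ\varepsilon_{\M,\M}\iso 1_\M$, back to $\varepsilon_{\M,\N}$. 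Since $1_{\M^{\ast\N}}$ trivially satisfies $(1\stensor 1)\circ\varepsilon_{\M,\N}=\varepsilon_{\M,\N}$, uniqueness forces $(\gamma_\M\stensor 1)\circ\kappa_{\M,\M,\N}=1$. The second law $(1\stensor\gamma_\N)\circ\kappa_{\M,\N,\N}=1$ is entirely symmetric, resting on $(\gamma_\N\stensor 1_\N)\circ\varepsilon_{\N,\N}\iso 1_\N$.

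I expect the only genuine obstacle to be bookkeeping. The displayed identities are written as though $\stensor$ were strictly associative and unital, whereas $(\Wstar,\stensor)$ is merely symmetric monoidal, so each equation is secretly decorated with the associators $a$ and unitors $c$ of the monoidal structure, and each interchange step silently transports these coherence isomorphisms. By Mac Lane's coherence theorem any two such decorations of a given diagram agree, so I would declare once at the outset that all bracketings and unit insertions are suppressed by coherence, and then carry out the manipulations in the strict setting, where the two normal forms are visibly equal.
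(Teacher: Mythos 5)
Your proposal is correct and follows essentially the same route as the paper: both arguments invoke the uniqueness clause of the universal property of $\varepsilon$ and reduce each side, via the defining equation of $\kappa$ and the interchange law, to the common normal form $(1 \stensor [(1 \stensor \varepsilon_{\R,\S}) \circ \varepsilon_{\N,\R}]) \circ \varepsilon_{\M,\N}$ for coassociativity, and to $\varepsilon_{\M,\N}$ itself for the counit laws using $(\gamma_\M \stensor 1) \circ \varepsilon_{\M,\M} = 1_\M$. Your explicit remark about suppressing associators and unitors by coherence is a point the paper leaves tacit, but it does not change the argument.
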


\begin{proof}
Let's prove the second claim first. We calculate:
\begin{align*}
&[((\gamma_\M \stensor 1) \circ \kappa_{\M, \M, \N})\stensor 1] \circ \varepsilon_{\M,\N}
= (\gamma_\M \stensor 1 \stensor 1) \circ (\kappa_{\M, \M, \N} \stensor 1) \circ \varepsilon_{\M, \N}
\\ &= (\gamma_\M \stensor 1\stensor 1) \circ (1 \stensor \varepsilon_{\M, \N}) \circ \varepsilon_{\M, \M}
= \varepsilon_{\M, \N} \circ (\gamma_\M \stensor 1) \circ \varepsilon_{\M, \M}
= \varepsilon_{\M, \N} \circ 1_\M = \varepsilon_{\M,\N}
\end{align*}
The universal property of $\varepsilon_{\M, \N}: \M \To \M^{\ast \N} \stensor \N$ then yields that $(\gamma _\M \stensor 1) \circ \kappa_{\M, \M, \N}=1$. The equality $(1 \stensor \gamma_\N) \circ \kappa_{\M, \N, \N} = 1$ is proved analogously.

For the second claim, a similar calculation to the one above yields the equality
\begin{align*}[((\kappa_{\M, \N, \R} \stensor 1) \circ \kappa_{\M, \R, \S})\stensor 1] \circ \varepsilon_{\M, \S} & = (1 \stensor (1 \stensor \varepsilon_{\R,\S}) \circ \varepsilon_{\N, \R}))\circ \varepsilon_{\M,\N} \\ &= [((1 \stensor \kappa_{\N, \R, \S}) \circ \kappa_{\M, \N, \S}) \stensor 1] \circ \varepsilon_{\M, \S}.\end{align*}
Again, by the universal property of $\varepsilon_{\M,\S}: \M \To \M^{\ast \S} \stensor \S$, we conclude that $(\kappa_{\M, \N, \R} \stensor 1) \circ \kappa_{\M, \R, \S} = (1 \stensor \kappa_{\N, \R, \S}) \circ \kappa_{\M, \N, \S}$.
\end{proof}

Thus, the functor $(\M, \o\N) \mapsto \o\M^{\ast \N}$ really does behave very much like $\Hom: \Wstar \times \o\Wstar \To \Set$. If we replace the latter with the former, the category $\o\Wstar$ becomes an \emph{enriched category}, enriched over itself. The term `closed' in Theorem \ref{Free.E} refers to the fact that $(\o \Wstar, \stensor)$ is canonically equipped with such an enrichment, and is thusly ``closed under the formation of hom objects''.

If we look at the hom objects of our enriched category $\o \Wstar$ through the functor $\Hom(\o\CC, -)$, we recover the original category $\o\Wstar$. Indeed, the natural bijective correspondence $ \Hom(\o\CC, \o\M^{\ast \N}) \iso \Hom(\o\N, \o\M)$ respects composition in the sense that if $\pi: \M \To \N$ and $\rho: \N \To \R$ are morphisms, then $(\gamma_\pi \stensor \gamma_\rho) \circ \kappa_{\M, \N, \R} = \gamma_{\rho \circ \pi}$. We deduce this equality from the the following calculation:
\begin{align*}
(((&\gamma_\pi \stensor \gamma_\rho) \circ \kappa_{\M, \N, \R}) \stensor 1) \circ \varepsilon_{\M, \R}
=
(\gamma_\pi \stensor \gamma_\rho \stensor 1) \circ (\kappa_{\M, \N, \R} \stensor 1) \circ \varepsilon_{\M, \R}
\\ &=
(\gamma_\pi \stensor \gamma_\rho \stensor 1) \circ (1 \stensor \varepsilon_{\N, \R}) \circ \varepsilon_{\M, \N}
=
(\gamma_\rho \stensor 1) \circ \varepsilon_{\N, \R} \circ (\gamma_\pi \stensor 1) \circ \varepsilon_{\M, \N}
=
\rho \circ \pi
\end{align*}

\begin{corollary}\label{Free.O}
Let $\pi: \M \To \N$ and $\rho: \N \To \R$ be morphisms of $W^*$-algebras. Then, $(\gamma_\pi \stensor \gamma_\rho) \circ \kappa_{\M, \N, \R} = \gamma_{\rho \circ \pi}$.
\end{corollary}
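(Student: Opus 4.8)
The plan is to verify the identity by appealing to the universal property of $\varepsilon_{\M, \R}$, which characterizes morphisms out of $\M^{\ast \R}$. Both $(\gamma_\pi \stensor \gamma_\rho) \circ \kappa_{\M, \N, \R}$ and $\gamma_{\rho \circ \pi}$ are morphisms $\M^{\ast \R} \To \CC$, and the defining bijection $\Hom(\M^{\ast \R}, \CC) \iso \Hom(\M, \R)$ sends a character $\sigma$ to $(\sigma \stensor 1) \circ \varepsilon_{\M, \R}$. So it suffices to show that tensoring each side with $1_\R$ and precomposing with $\varepsilon_{\M, \R}$ yields the same morphism $\M \To \R$.

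First I would record the defining relations I intend to use. By the definition of the correspondence $\gamma$, we have $(\gamma_\pi \stensor 1) \circ \varepsilon_{\M, \N} = \pi$, $(\gamma_\rho \stensor 1) \circ \varepsilon_{\N, \R} = \rho$, and $(\gamma_{\rho \circ \pi} \stensor 1) \circ \varepsilon_{\M, \R} = \rho \circ \pi$. Cocomposition contributes the relation $(\kappa_{\M, \N, \R} \stensor 1) \circ \varepsilon_{\M, \R} = (1 \stensor \varepsilon_{\N, \R}) \circ \varepsilon_{\M, \N}$. The right-hand side of the corollary is then handled immediately: tensoring $\gamma_{\rho \circ \pi}$ with $1_\R$ and precomposing with $\varepsilon_{\M, \R}$ gives exactly $\rho \circ \pi$.

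The left-hand side requires a short chain of rewrites. I would first pull the tensor factor out of the composite, writing the expression $(((\gamma_\pi \stensor \gamma_\rho) \circ \kappa_{\M, \N, \R}) \stensor 1) \circ \varepsilon_{\M, \R}$ as $(\gamma_\pi \stensor \gamma_\rho \stensor 1) \circ (\kappa_{\M, \N, \R} \stensor 1) \circ \varepsilon_{\M, \R}$, using bifunctoriality of $\stensor$; then substitute the cocomposition relation to replace $(\kappa_{\M, \N, \R} \stensor 1) \circ \varepsilon_{\M, \R}$ by $(1 \stensor \varepsilon_{\N, \R}) \circ \varepsilon_{\M, \N}$. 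The crux is then to reorganize $(\gamma_\pi \stensor \gamma_\rho \stensor 1) \circ (1 \stensor \varepsilon_{\N, \R})$ into $(\gamma_\rho \stensor 1) \circ \varepsilon_{\N, \R} \circ (\gamma_\pi \stensor 1)$ as morphisms $\M^{\ast \N} \stensor \N \To \R$, after which the two defining relations for $\gamma_\pi$ and $\gamma_\rho$ collapse the whole composite to $\rho \circ \pi$.

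I expect this reorganization to be the main obstacle, since it is the one step that is genuinely about the monoidal structure rather than about unwinding definitions. It rests on the interchange law: because $\gamma_\pi$ acts only on the $\M^{\ast \N}$ tensorand, while $\varepsilon_{\N, \R}$ and $\gamma_\rho$ act only on the $\N$ tensorand, these operations commute past one another, and the unit coherence isomorphisms $\CC \stensor X \iso X$ let us suppress the resulting copies of $\CC$. Once this bookkeeping is carried out carefully, the two morphisms $\M \To \R$ agree, and the universal property of $\varepsilon_{\M, \R}$ then forces $(\gamma_\pi \stensor \gamma_\rho) \circ \kappa_{\M, \N, \R} = \gamma_{\rho \circ \pi}$.
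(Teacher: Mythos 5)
Your proposal is correct and follows essentially the same route as the paper: the paper verifies $(((\gamma_\pi \stensor \gamma_\rho) \circ \kappa_{\M, \N, \R}) \stensor 1) \circ \varepsilon_{\M, \R} = \rho \circ \pi$ by exactly the chain of rewrites you describe (bifunctoriality, the defining relation of $\kappa$, then the interchange law), and concludes by the universal property of $\varepsilon_{\M, \R}$. No gaps.
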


The present paper embraces the viewpoint that the enriched category $\o\Wstar$ is the real object of our study, whereas the original category $\o\Wstar $ is simply a view of this entity from the category $\Set$, in which mathematics is ordinarily developed. Thus, the collection $\o \M^{\ast \N}$ consists of all possible functions from $\o\N$ to $\o\M$, whereas the set $\Hom(\o\N, \o\M)$ consists just of those that are visible from $\Set$.

The enrichment of $\o\Wstar$ does not cut ties with $\Set$ completely: Although we have replaced Hom sets with hom objects, composition is still a morphism in the ordinary sense. However, we may preserve the sense that $\o\Wstar$ is a self-contained entity via the observation that $\Set$ behaves like a subcategory of enriched $\o\Wstar$: Each set is a quantum collection via the identification $X \mapsto \o \ell^\infty(X)$, and the set $X^Y = \o \ell^\infty(\Hom(Y, X))$ is just the maximum subset of the collection $\o \ell^\infty(X)^{\ast \ell^\infty(Y)}$.

We conclude this section by examining Corollaries \ref{Free.G}-\ref{Free.I} in the context of the enriched category $\o\Wstar$. Recall that we interpret $\o\N_0 \ast \o\N_1$ to be the collection all pairs from $\o \N_0$ and $\o \N_1$. Thus, the equation $\M^{\ast \N_0} \ast \M^{\ast \N_1} \iso \M^{\ast (\N_0 \oplus \N_1)}$ says that $ \o\N_0 \oplus \N_1$ is the coproduct of $\o\N_0$ and $\o\N_1$ within the enriched category. Similarly, the equation $\M_0^{\ast \N} \ast \M_1^{\ast \N} \iso (\M_0 \ast \M_1)^{\ast \N}$ says that $\o \M_0 \ast \M_1$ is the product $\o\M_0$ and $\o\M_1$ within the enriched category. Finally, the natural isomorphism $(\M^{\ast \N})^{\ast \R} \iso \M^{\ast (\R \stensor \N)}$ is the adjunction between functors $\o(-)^{\ast \N}$ and $\o -\stensor \N$ within the enriched category.

\section{Quantum Operations}\label{Quantum Operations}

Each physical operation between two quantum systems is given by a function $f: S_\N \To S_\M$, where $\N$ and $\M$ are the $W^*$-algebras associated to the source and target systems respectively. Since the affine combination of states corresponds to probabilistic mixing, the function $f$ should respect the affine structure of $S_\N$. Thus, $f$ extends uniquely to a bounded $\CC$-linear function $\N_* \To \M_*$, and so we obtain a normal linear function $f^*: \M \To \N$. If $m\in \M$ is positive, then for all states $\nu \in S_\N$, $\nu(f^*(m)) =(f(\nu))(m) \geq 0$, so $f^*(m)$ is positive, and more generally $f^*$ is positive. A similar computation leads us to conclude that $f^*$ must be a unital normal positive map.

We assume that any quantum operation may be implemented by a physical process that leaves any number $n$ of independent qubits unaffected. Thus, we begin anew with a function $f_n: S_{\N\stensor M_{2^n}(\CC)} \To S_{\M  \stensor M_{2^n}(\CC)}$ such that $f_n(\nu \stensor \varphi) = f(\nu) \stensor \varphi$ for all states $\nu \in S_\N$ and $\varphi \in S_{M_{2^n}(\CC)}$. Repeating the above argument, we obtain a family of unital normal positive maps $f_n^*: \N \stensor M_{2^n}(\CC) \To \M \stensor M_{2^n}(\CC)$ such that $f_n^*(m \tensor x) = f^*(m) \tensor x$ for all $m \in \M$ and $x \in M_{2^n}(\CC)$. The existence of such a family is a nontrivial condition on $f^*$ called \emph{completely positivity}.

We've essentially condensed Kraus' original argument \cite{Kraus83} for the formalization of quantum operations as completely positive maps, generalizing it to $W^*$-algebras.

\begin{definition}
A linear map $\psi: \M \To \N$ is \emph{completely positive} in case whenever $[m_{ij}]$ is a positive matrix with entries $m_{ij} \in \M$, then the matrix $[\psi(m_{ij})]$ is also positive.
\end{definition}

One can show that a normal positive map $\psi: \M \To \N$ is completely positive iff for all $W^*$-algebras $\R$, there is a normal positive map $(\psi \stensor 1): \M \stensor \R \To \N \stensor \R$ defined by $(\psi \stensor 1)(m \tensor r) = \psi(m) \tensor r$. Evidently, every normal $*$-homomorphism is completely positive.

Our focus on the class of unital normal completely positive maps can also be motivated via Stinespring's Theorem, of which there is a proof in the appendix. This fundamental result is essentially an adaptation of the GNS construction to completely positive maps:

\begin{theorem}[Stinespring's Theorem]
Let $\M$ be a $W^*$-algebra, and $\K$ a Hilbert space. If $\psi: \M \To \B(\K)$ is  a normal completely positive map, then there exist a Hilbert space $\H$, a normal unital $*$-homomorphism $\sigma: \M \To \B(\H)$, and a bounded operator $v \in \B(\K, \H)$, such that $\psi(m) = v^*\sigma(m) v$ for all $m \in \M$.
\end{theorem}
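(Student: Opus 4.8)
The plan is to adapt the GNS construction to the completely positive map $\psi$, producing a Stinespring dilation, and then to use the normality of $\psi$ to force the dilating representation $\sigma$ to be normal. I would begin with the algebraic tensor product $\M \tensor \K$, equipped with the sesquilinear form determined by
$$\langle m_1 \tensor \xi_1,\ m_2 \tensor \xi_2\rangle = \langle \xi_1,\ \psi(m_1^* m_2)\,\xi_2\rangle_\K.$$
For a finite sum $\sum_i m_i \tensor \xi_i$, the quantity $\sum_{i,j}\langle \xi_i, \psi(m_i^* m_j)\xi_j\rangle$ is nonnegative precisely because $[m_i^* m_j]$ is a positive matrix over $\M$, so by complete positivity $[\psi(m_i^* m_j)]$ is positive and pairs nonnegatively with the vector $(\xi_i)_i$. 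This is the one place where complete positivity enters.

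Next I would pass to the separated completion $\H$, by quotienting out the null space $N = \{x \suchthat \langle x, x\rangle = 0\}$ and completing; write $[m \tensor \xi]$ for the class of $m \tensor \xi$. I define $\sigma(a)$ on the dense image by $\sigma(a)[m \tensor \xi] = [am \tensor \xi]$ and $v \in \B(\K, \H)$ by $v\xi = [1 \tensor \xi]$. The key estimate, which simultaneously shows that $\sigma(a)$ descends to the quotient and is bounded by $\|a\|$, comes from the operator inequality $\|a\|^2 \cdot 1 - a^*a \geq 0$: writing the difference as $b^*b$ gives $[m_i^*\, a^* a\, m_j] \leq \|a\|^2\,[m_i^* m_j]$ as positive matrices over $\M$, so complete positivity yields $\|\sigma(a)\sum_i m_i \tensor \xi_i\|^2 \leq \|a\|^2 \|\sum_i m_i \tensor \xi_i\|^2$. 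It is then routine to check that $\sigma$ is a unital $*$-homomorphism and that $v^*\sigma(m)v = \psi(m)$, since $\langle v\xi, \sigma(m)v\eta\rangle = \langle [1 \tensor \xi], [m \tensor \eta]\rangle = \langle \xi, \psi(m)\eta\rangle$; boundedness of $v$ follows from $\|v\xi\|^2 = \langle \xi, \psi(1)\xi\rangle$.

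The main obstacle, and the only point where the normality hypothesis is used, is to verify that $\sigma$ is normal. I would argue through matrix coefficients. For $u = [m_1 \tensor \xi_1]$ and $w = [m_2 \tensor \xi_2]$ in the dense subspace, the function $a \mapsto \langle u, \sigma(a)w\rangle = \langle \xi_1, \psi(m_1^* a m_2)\xi_2\rangle$ is $w^*$-continuous, being the composite of the $w^*$-continuous map $a \mapsto m_1^* a m_2$, the normal map $\psi$, and the $w^*$-continuous functional $x \mapsto \langle \xi_1, x\xi_2\rangle$ on $\B(\K)$; hence it lies in the predual $\M_*$. For arbitrary $u, w \in \H$, I would approximate by vectors $u_n, w_n$ in the dense subspace; since $\sigma$ is contractive, $a \mapsto \langle u_n, \sigma(a)w_n\rangle$ converges to $a \mapsto \langle u, \sigma(a)w\rangle$ uniformly on the unit ball of $\M$, that is, in the norm of $\M^*$. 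As $\M_*$ is norm-closed in $\M^*$, the limiting functional is again in $\M_*$, so every matrix coefficient of $\sigma$ is normal. This is exactly the statement that $\sigma$ is $w^*$-$w^*$ continuous, completing the dilation. I expect this last normality argument to be the only genuinely delicate step, the remainder being the standard Stinespring dilation.
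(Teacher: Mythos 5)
Your dilation construction is exactly the one the paper uses: the same sesquilinear form on the algebraic tensor product of $\M$ with $\K$, the same use of complete positivity for positivity of the form and for the bound $\|\sigma(a)\| \leq \|a\|$, the same separated completion, and the same $v\xi = [1 \tensor \xi]$. Where you genuinely diverge is the normality of $\sigma$, which both you and the paper correctly identify as the only delicate point. The paper argues order-theoretically: it takes a decreasing net $m_\lambda$ of positive elements with infimum $0$, lets $x$ be the infimum of $\sigma(m_\lambda)$, and uses strong-operator convergence on the dense set of vectors $[m \tensor \zeta]$ together with normality of $\psi$ (applied to $\psi(m^* m_\lambda^2 m) \to 0$) to conclude $x = 0$. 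You instead argue through the predual: each matrix coefficient $a \mapsto \langle u, \sigma(a) w\rangle = \langle \xi_1, \psi(m_1^* a m_2)\xi_2\rangle$ on the dense subspace is a composite of $w^*$-continuous maps, hence lies in $\M_*$, and the general matrix coefficient is a limit of these, uniform on the unit ball, so lies in $\M_*$ because the predual is norm-closed in $\M^*$. Both routes are correct and standard; yours trades the order-continuity characterization of normality for the facts that $\M_*$ is norm-closed and that $\sigma^*$ maps normal functionals to normal functionals. One small gloss to be aware of: normality of all matrix coefficients directly gives continuity from the $w^*$-topology to the \emph{weak operator} topology, not yet to the $\sigma$-weak topology on $\B(\H)$; since $\sigma$ is bounded and a general $\sigma$-weakly continuous functional is a norm limit (in $\B(\H)^*$) of finite sums of vector functionals, the same norm-closedness of $\M_*$ that you already invoke closes this gap, but you should say so explicitly.
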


A straightforward consequence of this theorem is that a function $\psi: \M \To \N$ is a normal unital completely positive map iff for some pair of faithful representations of $\M \subsetof \B(\H)$ and $\N \subsetof \B(\K)$ as von Neumann algebras, there exists an isometry $v: \K \To \H$ such that $\psi(m) = v^* m v$. Thus, the laundry list of properties that defines unital normal completely positive maps turns out to be quite cohesive.

\begin{definition}
Let $\pWstar$ be the category
\begin{enumerate}
\item whose objects are $W^*$-algebras, and
\item whose morphisms are unital normal completely positive maps.
\end{enumerate}
Repeating the argument of Section \ref{The Spatial Tensor Product}, we see that the spatial tensor product $\stensor$ makes $\pWstar$ into a symmetric monoidal category.
\end{definition}

Thus, $\o\pWstar$ has the same objects as $\o\Wstar$, but more morphisms.  A morphism $\o \psi: 
\o \CC \To \o\M$ in $\o\pWstar$ is just a state on $\M$. States are typically considered to be the noncommutative analogues of probability measures, and indeed the states on $L^\infty(X, \mu)$ are in canonical bijective correspondence with the probability measures that are absolutely continuous with respect to $\mu$. Because to us $\o\M$ is a collection, we think of $\o \psi$ as a probability \emph{distribution}.

If $X$ and $Y$ are sets, then a morphism $\o\psi: X \To Y$ in $\o \Wstar$ is just an assignment of probability measures on $Y$ to the elements of $X$. Thus, we will think of the morphisms of $\o \Wstar$ as \emph{probabilistic} quantum operations; intuitively, each initial configuration is transformed randomly to a final configuration.

Given this motivation, it's natural to ask whether the unital normal completely positive maps between two given $W^*$-algebras arise somehow from the unital normal $*$-homomorphisms between them, i.e., whether probabilistic quantum operations are mixtures of deterministic ones. Interpreted naively, the answer to this question is no because sometimes there are no unital normal $*$-homomorphisms between two $W^*$-algebras. For example, there are no unital normal $*$-homomorphisms between $M_2(\CC)$ and $L^\infty(\RR, \lambda)$ in either direction.

We rescue this hypothesis by showing that every probabilistic quantum operation $\o \N \To \o \M$ is indeed a mixture of deterministic quantum operations $\o \N \To \o \M$ in the sense that it arises from a probability distribution on the collection $\o \M^{\ast \N}$.

\section{Completely Positive Maps from States on the Free Exponential}

Fix two $W^*$-algebras $\M$ and $\N$, and let $\varepsilon: \M \To \M^{\ast \N} \stensor \N$ be the coevaluation morphism, as in the Section \ref{Free Exponential}. We saw that there is a natural bijection $\Hom(\M^{\ast \N}, \CC) \iso \Hom(\M, \N)$ given by $\gamma \mapsto (\gamma \stensor 1 ) \circ \varepsilon$. If we look at this picture in the opposite category $\o \Wstar$, we see that we begin with a configuration of $\o\N$, then prepare a configuration of $\o\M^{\ast \N}$, and finally evaluate. What happens if we prepare the configuration of $\o \M^{\ast \N}$ probabilistically, i.e., replace $\gamma$ with a non-homomorphic state?

We saw in Section \ref{Quantum Operations} above that the class of $W^*$-algebras, together with their unital normal completely positive maps and the spatial tensor product, forms a symmetric monoidal category. Thus, for any state $\mu: \M^{\ast \N} \To \CC$, the expression $(\mu \stensor 1) \circ \varepsilon$ defines a unital normal completely positive map $\M \To \N$. So, indeed, if we select a function from $\o\N$ to  $\o\M$ randomly, then we obtain what we've called probabilistic function between these collections. The goal of this section is to show that every morphism of $\pWstar$ can be obtained in this way. I hope that this is seen as further evidence for the soft thesis that a unital completely positive map is the same thing a probabilistic function in the opposite direction.

\begin{theorem}\label{Completely.A}
The formula $\mu \mapsto (\mu \stensor 1) \circ \varepsilon_{\M,\N}$ defines a \emph{surjective} natural transformation $\Hom_\pWstar(\M^{\ast \N}, \CC) \To \Hom_\pWstar(\M, \N)$.
\end{theorem}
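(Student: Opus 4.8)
The content of the statement is the surjectivity; well\mbox{-}definedness and naturality are formal. The transformation is well defined because $\mu \stensor 1$ is unital normal completely positive whenever the state $\mu$ is, and $\varepsilon$ is a morphism, so $(\mu \stensor 1)\circ \varepsilon$ is a unital normal completely positive map $\M \To \N$; naturality in $\M$ and $\N$ is the same diagram chase as in the proof of Theorem~\ref{Free.E}, now read inside $\pWstar$, using the naturality of $\varepsilon$ and the functoriality of $\stensor$. For surjectivity, which is Theorem~\ref{Introduction.B}, I would first reduce to a dilation statement: it suffices to produce a $W^*$-algebra $\R$, a morphism $\pi\colon \M \To \R \stensor \N$, and a normal state $\omega\colon \R \To \CC$ with $(\omega \stensor 1)\circ \pi = \psi$. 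Granting such data, the universal property of the free exponential (Theorem~\ref{Free.A}) supplies a unique morphism $\rho\colon \M^{\ast \N}\To \R$ with $(\rho \stensor 1)\circ \varepsilon = \pi$, and then $\mu := \omega \circ \rho$ is a normal state on $\M^{\ast \N}$ with
\[
(\mu \stensor 1)\circ \varepsilon = (\omega \stensor 1)\circ (\rho \stensor 1)\circ \varepsilon = (\omega \stensor 1)\circ \pi = \psi .
\]
Thus everything reduces to dilating $\psi$ to a $*$-homomorphism into a spatial tensor product, followed by a state.

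To construct the dilation I would represent $\N \subsetof \B(\K)$ by its universal normal representation and apply Stinespring's Theorem to $\psi$ viewed as a normal completely positive map $\M \To \B(\K)$, in its GNS form. This gives a Hilbert space $\H$ spanned by vectors $[m \tensor \zeta]$, a normal unital representation $\sigma\colon \M \To \B(\H)$ with $\sigma(m')[m\tensor\zeta]=[m'm\tensor\zeta]$, and an isometry $v\colon \K\To\H$, $v\zeta=[1\tensor\zeta]$, such that $\psi(m)=v^*\sigma(m)v$. The crucial point is that $\psi$ is $\N$-valued: the formula $\hat b\,[m\tensor\zeta]=[m\tensor b\zeta]$ then defines a normal representation $b\mapsto \hat b$ of the commutant $\N'\subsetof \B(\K)$ on $\H$, since for unitary $b\in\N'$ the inner product $\langle[m_0\tensor b\zeta_0],[m_1\tensor b\zeta_1]\rangle=\langle b\zeta_0,\psi(m_0^*m_1)b\zeta_1\rangle$ equals $\langle\zeta_0,\psi(m_0^*m_1)\zeta_1\rangle$, because $\psi(m_0^*m_1)\in\N$ commutes with $b$. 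This representation commutes with $\sigma(\M)$ and satisfies $\hat b\,v=v\,b$, and it is faithful (pairing against $v\K$ shows $\hat b=0\Rightarrow b=0$). Replacing the whole dilation by its tensor product with an infinite\mbox{-}dimensional space—which preserves $\psi(m)=v^*\sigma(m)v$—gives $b\mapsto\hat b$ infinite multiplicity, so that by the fact that faithful normal representations of a von Neumann algebra with equal infinite multiplicity are unitarily equivalent, it becomes the amplification $b\mapsto b\tensor 1$ on $\K\tensor\L$. The commutation theorem for spatial tensor products then yields $\{\,b\tensor 1\suchthat b\in\N'\,\}'=\N\stensor\B(\L)$, and since $\sigma(\M)$ lies in this commutant, $\sigma$ is a morphism $\M\To\N\stensor\B(\L)\iso\B(\L)\stensor\N$.

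The principal obstacle is that $v$ need not be the standard isometry. Intertwining the two copies of $\N'$ forces $v$ to be an $\N'$-module map $\K\To\K\tensor\L$, hence of the form $v\zeta=\sum_i(T_i\zeta)\tensor e_i$ with $T_i\in\N$ and $\sum_i T_i^*T_i=1$, and no state slice $(\omega\stensor 1)$ can reproduce the conjugation by these $T_i$; the twist must instead be absorbed into the homomorphism. Both $v$ and the standard isometry $v_0\colon\zeta\mapsto\zeta\tensor\eta$ are isometries in the module algebra $\N\stensor\B(\L)$, so their range projections are Murray--von Neumann equivalent there. The technical crux is to promote this to a unitary $u\in\N\stensor\B(\L)$ with $u\,v=v_0$, which also needs the complementary projections to be equivalent; arranging $\L$ to carry an infinite orthogonal reservoir beyond the ranges of $v$ and $v_0$ makes both complements equivalent to the identity, so comparison theory in the properly infinite algebra $\N\stensor\B(\L)$ produces $u$. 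Setting $\pi(m)=u\,\sigma(m)\,u^*$ keeps $\pi$ inside $\N\stensor\B(\L)\iso\R\stensor\N$ with $\R=\B(\L)$, and taking $\omega$ to be the vector state at $\eta$ gives $v_0^*\pi(m)v_0=v^*\sigma(m)v=\psi(m)$, i.e.\ $(\omega\stensor 1)\circ\pi=\psi$ up to the symmetry $\B(\L)\stensor\N\iso\N\stensor\B(\L)$. I expect the genuinely delicate part to be this conversion—identifying the commutant of the amplified $\N'$ as an honest tensor factor $\N$ and rotating the module isometry to standard form by a unitary—whereas the surrounding reduction is routine assembly of Stinespring's Theorem and the universal property of $\M^{\ast\N}$.
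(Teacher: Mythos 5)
Your proposal is correct and its skeleton coincides with the paper's: the same reduction of surjectivity to a dilation statement via the universal property of $\varepsilon$, the same Stinespring construction, the same key observation that $\N'$ acts normally and faithfully on the Stinespring space $\H$ by $\sigma(m)v\zeta \mapsto \sigma(m)v(n'\zeta)$ with $v$ as an intertwiner, and the same final appeal to the commutation theorem $(\CC\stensor\N')'=\B(\L)\stensor\N$. Where you diverge is in the technical core, namely producing a unitary that simultaneously standardizes the $\N'$-action and carries $v$ to the canonical isometry. The paper proves a single bespoke lemma (Lemma \ref{Completely.B}) by a transfinite back-and-forth over cyclic subrepresentations, constructing the intertwining unitary on the amplified spaces so that its $(e_0,e_0)$-corner extends $v$ in one stroke; you instead invoke the standard structure theorem that faithful normal representations become unitarily equivalent after sufficient amplification, and then separately rotate the resulting $\N'$-module isometry to standard form by Murray--von Neumann comparison in the properly infinite algebra $\N\stensor\B(\L)$, correctly flagging that one must also match the complementary projections, which your ``reservoir'' enlargement of $\L$ handles. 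Both routes are sound. Yours is shorter if one grants the representation-theoretic structure theorem and comparison theory; the paper's is self-contained, consistent with its announced policy of reproving the needed facts from scratch, and packages the two normalizations into one induction rather than two successive unitary conjugations.
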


The demonstration of naturality is simply a matter of writing out the relevant definitions. The real content of the above theorem is in the claim of surjectivity. We prove the theorem in two steps, each a lemma.

\begin{lemma}\label{Completely.B}
Let $\M\subsetof \B(\H)$ and $\N\subsetof \B(\K)$ be von Neumann algebras, and let $\pi: \M \To \N$ be an isomorphism. If $\kappa\geq\dim(\H), \dim(\K)$ is an infinite cardinal, then there is a unitary operator $u \in \B(\K \tensor \ell^2_\kappa, \H \tensor \ell^2_\kappa)$ such that for all $m \in \M$, $(m \tensor 1)u = u(\pi(m) \tensor 1)$. Furthermore, if $v \in \B(\K, \H)$ is any given partial isometry such that $ m v = v\pi(m)$, then we can construct $u$ so that $(1 \stensor \hat e_0^*) u(1 \stensor \hat e_0) $ extends $v$, where $\{e_\alpha\}_{\alpha < \kappa}$ is the standard basis of $\ell^2_\kappa$.
\end{lemma}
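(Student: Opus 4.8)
The plan is to read the hypothesis as a statement about two faithful normal representations of the single $W^*$-algebra $\M$: the identity representation $\rho_1 = \mathrm{id}: \M \To \B(\H)$ and $\rho_2 = \pi: \M \To \B(\K)$. The first claim then says that their amplifications $\rho_1 \tensor 1_{\ell^2_\kappa}$ and $\rho_2 \tensor 1_{\ell^2_\kappa}$ are unitarily equivalent through an intertwiner. To organize this I would form $\sigma = (\rho_1 \tensor 1) \oplus (\rho_2 \tensor 1)$ on $(\H \tensor \ell^2_\kappa) \oplus (\K \tensor \ell^2_\kappa)$, let $p_1, p_2$ be the two summand projections, which lie in the commutant $\mathcal C = \sigma(\M)'$, and observe that a partial isometry $w \in \mathcal C$ with $w^* w = p_2$ and $w w^* = p_1$ restricts to exactly the desired unitary $u: \K \tensor \ell^2_\kappa \To \H \tensor \ell^2_\kappa$; the relation $(m \tensor 1) u = u(\pi(m) \tensor 1)$ is merely a restatement of $w \in \sigma(\M)'$. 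Thus the first claim reduces to the Murray--von Neumann equivalence $p_1 \sim p_2$ in $\mathcal C$.

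To prove $p_1 \sim p_2$ I would establish subordination in both directions and invoke the elementary Schr\"oder--Bernstein theorem for projections in a von Neumann algebra. For one direction I would decompose $\rho_1$, via Zorn's lemma, into an orthogonal family of at most $\dim \H \leq \kappa$ cyclic subrepresentations $\H_i$ with cyclic vectors $\xi_i$; each is the GNS representation of the normal vector functional $\phi_i = \langle \rho_1(\cdot)\xi_i, \xi_i\rangle$. Since $\rho_2$ is normal, every normal positive functional on $\M$ is a countable sum of vector functionals $\sum_n \langle \rho_2(\cdot)\eta_n, \eta_n\rangle$; assembling the $\eta_n$ into a single vector supported on a countable block of coordinates of $\ell^2_\kappa$ exhibits $\phi_i$ as a vector functional of $\rho_2 \tensor 1$, so each $\H_i$ embeds as a cyclic subrepresentation of $\rho_2 \tensor 1$. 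Partitioning the $\kappa$ coordinates into disjoint countable blocks, one per $i$, which is possible since $\kappa$ is infinite, keeps these copies mutually orthogonal, so $\rho_1$ is a subrepresentation of $\rho_2 \tensor 1$; amplifying and using $\kappa \cdot \kappa = \kappa$ gives $p_1 \preceq p_2$. The symmetric argument gives $p_2 \preceq p_1$, and Schr\"oder--Bernstein yields $p_1 \sim p_2$.

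For the final clause I would build $u$ by extending the given intertwiner rather than from scratch. The partial isometry $v \tensor e_{00}$ lies in $\mathcal C$ — the relation $mv = v\pi(m)$ is exactly what places it there — and runs from the subprojection $v^*v \tensor e_{00} \leq p_2$ to $vv^* \tensor e_{00} \leq p_1$. Its complementary projections satisfy $p_2 - (v^*v \tensor e_{00}) \sim p_2$ and $p_1 - (vv^* \tensor e_{00}) \sim p_1$ by infinite absorption: each complement is sandwiched between $1 \tensor (1 - e_{00})$, already equivalent to the full summand projection because $1 - e_{00}$ has infinite rank $\kappa$, and the summand projection itself, so Schr\"oder--Bernstein applies again. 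Combined with $p_1 \sim p_2$, the two complements are equivalent in $\mathcal C$; adding a partial isometry in $\mathcal C$ realizing that equivalence to $v \tensor e_{00}$ produces $w$ with $w^* w = p_2$ and $w w^* = p_1$, and its corner $(1 \tensor \hat e_0^*) u (1 \tensor \hat e_0)$ then restricts to $v$ on the initial space of $v$.

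The main obstacle is the subordination step, namely that each cyclic piece of $\rho_1$ genuinely embeds, orthogonally, into $\rho_2 \tensor 1_{\ell^2_\kappa}$. This rests on representing every normal positive functional as a countable sum of vector functionals, together with careful bookkeeping of orthogonal coordinate blocks so that the $\dim \H$-many copies remain disjoint inside the $\kappa$-dimensional multiplicity space. Everything else is an assembly of the standard comparison theory for projections.
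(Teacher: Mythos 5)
Your proposal is correct, but it reaches the unitary by a genuinely different route than the paper. The paper's proof is a direct transfinite back-and-forth recursion: it first amplifies so that every normal state of $\M$ (and of $\N$) is a vector state, well-orders the basis vectors of $\H \tensor \ell^2_\kappa$ and $\K \tensor \ell^2_\kappa$, and at each successor stage matches the least unreached basis vector on one side with a vector on the other side inducing the same normal state, extending the partial isometry by the uniqueness of cyclic representations with equal vector states (Pedersen 3.3.7); the ``furthermore'' clause is handled by seeding the recursion at stage zero with $(1 \tensor \hat e_0) v (1 \tensor \hat e_0^*)$. You instead package the same underlying facts as standard comparison theory: you reduce the statement to the Murray--von Neumann equivalence $p_1 \sim p_2$ in the commutant of $(\rho_1 \tensor 1) \oplus (\rho_2 \tensor 1)$, prove two-sided subordination by quasi-containment (every normal positive functional is a countable sum of vector functionals in a faithful normal representation, plus bookkeeping with disjoint countable coordinate blocks and $\kappa \cdot \kappa = \kappa$), and close with Schr\"oder--Bernstein for projections; the ``furthermore'' clause follows from infinite absorption applied to the complements of $v^*v \tensor \hat e_0 \hat e_0^*$ and $v v^* \tensor \hat e_0 \hat e_0^*$. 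The two arguments are morally the same --- the paper's back-and-forth is essentially an inlined proof of Schr\"oder--Bernstein, and its state-matching step is your quasi-containment --- but yours is more modular and cites standard structure theory, whereas the paper's is deliberately self-contained, needing only the GNS uniqueness fact. Each of the standard results you invoke (the $\sum_n \omega_{\eta_n}$ form of normal functionals, Schr\"oder--Bernstein for projections) is true and correctly applied, so there is no gap; you are simply assuming more of the literature than the paper chooses to.
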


\begin{proof}
The proof is a back-and-forth construction based on the fact that any two cyclic representations of $\M$ are unitarily equivalent iff the states associated to their cyclic vectors are equal. This is the only essential content of the proof whose details are spelled out below, and the reader is encouraged to skip it.

We may assume that every normal state of $\M$ is a vector state, and likewise for $\N$, by replacing $\H$ and $\K$ by $\H \tensor \ell^2$ and $\K \tensor \ell^2$ respectively. Choose bases for both $\H$ and $\K$, and so obtain well-ordered bases of $\H \tensor \ell^2_\kappa$ and $\K \tensor \ell^2_\kappa$ of order type $\kappa$, consisting of vectors of the form $\xi \tensor e_\alpha$. 

We construct the desired unitary operator by transfinite induction. At each stage $\gamma \leq \kappa$ of the construction, we have a partial isometry $u \in \B(\K \tensor \ell^2_\kappa, \H \tensor \ell^2_\kappa)$ satisfying the following properties:
\begin{enumerate}
   \item For all $m \in \M$,  $(m \tensor 1) u = u (\pi(m) \tensor 1)$.
   \item The domain projection $u^*u$ can be written as a disjoint sum $u^*u = \sum_{\alpha \leq \gamma} p_\alpha$ with $p_\alpha \leq 1 \tensor \hat e_\alpha \hat e_\alpha^*$. Likewise for the range projection $uu^*$.
\end{enumerate}

At stage zero, set $u = (1 \tensor \hat e_0) v(1 \tensor \hat e_0^*)$. At each odd successor stage, find the least basis vector $\zeta \tensor e_\alpha$ not in $u^*u(\K \tensor \ell^2_\kappa)$, and let $\zeta^{\perp} \tensor e_\alpha$ be the normal component of $\zeta \tensor e_\alpha$ to $u^*u(\K \tensor \ell^2_\kappa)$. Find the least basis vector $e_\beta$ such that $uu^* (1 \tensor \hat e_\beta \hat e_\beta^*) = 0$. Choose a vector $\xi \in \H$ such that $\langle \xi | \cdot \xi\rangle = \|\zeta^{\perp}\|^{-2}\langle\zeta^{\perp}| \pi(\cdot) \zeta^\perp \rangle$. By Theorem 3.3.7 in Pedersen's \emph{$C^*$-algebras and Their Automorphism Groups} \cite{Pedersen79}, the identity representation of $\M$ on $\M\xi$ is unitarily equivalent to the representation $\pi$ on $\N \zeta^\perp$, so we can extend $u$ to a partial isometry that sends $\|\zeta^\perp\|\inv \zeta^\perp \tensor e_\alpha$ to $\xi \tensor e_\beta$, and satisfies the properties enumerated above. At each even successor stage, we instead begin with the least basis vector $\xi \tensor e_\alpha$ not in $uu^*(\H \tensor \ell^2_\kappa)$, and proceed in the opposite direction. At each limit stage, we take the ultraweak limit of the partial isometries constructed at previous stages, which are all compatible.

After stage $2\alpha$ of the construction, the $\alpha^{th}$ basis vector of $\K\tensor \ell^2_\kappa$ is in $u^*u(\K\tensor \ell^2_\kappa)$ and the $\alpha^{th}$ basis vector of $\H \tensor \ell^2_\kappa$ is in $uu^*(\H \tensor \ell^2_\kappa)$. Thus, at the final stage $\kappa$, $u^*u=1$ and $uu^*=1$, so the desired unitary operator has been constructed.
\end{proof}

\begin{lemma}\label{Completely.C}
Let $\psi: \M \To \N$ be a unital normal completely positive map of $W^*$-algebras. There exists a Hilbert space $\L$, a morphism $\pi: \M \To \B(\L) \stensor \N$, and a state $\mu: \B(\H) \To \CC$, such that $(\mu \stensor 1) \circ \pi = \psi$.
\end{lemma}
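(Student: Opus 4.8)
The plan is to read this lemma as a \emph{covariant} form of Stinespring's theorem, in which the auxiliary von Neumann algebra $\B(\L)$ supplies the randomness as a tensor factor and the slice map $\mu \stensor 1$ plays the role of the compression $v^*(-)v$. First I would fix a faithful normal representation $\N \subsetof \B(\K)$ and apply Stinespring's Theorem to the normal completely positive map $\psi: \M \To \B(\K)$, obtaining a normal unital $*$-homomorphism $\sigma: \M \To \B(\H)$ and an operator $v: \K \To \H$ with $\psi(m) = v^*\sigma(m)v$. Passing to the minimal dilation I may assume $\sigma(\M)v\K$ is dense in $\H$, and since $\psi$ is unital, $v^*v = 1$, so $v$ is an isometry.

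The key construction is a covariance representation of $\N'$. For each $n' \in \N'$ I would define $\rho(n')$ on the dense subspace $\sigma(\M)v\K$ by $\rho(n')\sigma(m)v\xi = \sigma(m)vn'\xi$. The single place where complete positivity, and not mere positivity, is essential is the boundedness of $\rho(n')$: expanding $\|\sum_i \sigma(m_i)vn'\xi_i\|^2 = \sum_{i,j}\langle \psi(m_j^*m_i)n'\xi_i, n'\xi_j\rangle$, I would use that the matrix $[\psi(m_j^*m_i)]$ is positive and commutes with the operator acting as $n'$ in each coordinate (because $\psi$ is $\N$-valued) to bound this by $\|n'\|^2 \sum_{i,j}\langle \psi(m_j^*m_i)\xi_i,\xi_j\rangle = \|n'\|^2\|\sum_i \sigma(m_i)v\xi_i\|^2$. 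A routine check then shows that $\rho$ is a normal unital $*$-homomorphism with $\rho(\N') \subsetof \sigma(\M)'$ and $vn' = \rho(n')v$; compressing by $v$ gives $v^*\rho(n')v = n'$, so $\rho$ is faithful and hence an isomorphism onto the von Neumann algebra $\rho(\N')$.

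Next I would absorb $\rho$ into a standard amplified form via Lemma \ref{Completely.B}. Applied to the isomorphism $\rho: \N' \To \rho(\N')$ with the partial isometry $v^*$ (which satisfies $n'v^* = v^*\rho(n')$), the lemma yields, for a suitable infinite cardinal $\kappa \geq \dim\H, \dim\K$, a unitary $u \in \B(\H\tensor\ell^2_\kappa, \K\tensor\ell^2_\kappa)$ with $(n'\tensor 1)u = u(\rho(n')\tensor 1)$ for all $n' \in \N'$, and with $(1\tensor\hat e_0^*)u(1\tensor\hat e_0)$ extending $v^*$. Setting $\L = \ell^2_\kappa$ and $\tilde\pi(m) = u(\sigma(m)\tensor 1)u^*$, I would note that $\sigma(m)\tensor 1$ commutes with $\rho(n')\tensor 1$, so $\tilde\pi(m)$ commutes with $u(\rho(n')\tensor 1)u^* = n'\tensor 1$ for every $n' \in \N'$; by the commutation theorem for von Neumann tensor products, $\tilde\pi(m)$ lies in the commutant of $\{n'\tensor 1 : n' \in \N'\}$, which is $\N \stensor \B(\L)$. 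After the flip $\N \stensor \B(\L) \iso \B(\L)\stensor\N$ this gives the desired morphism $\pi: \M \To \B(\L)\stensor\N$.

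Finally I would take $\mu$ to be the vector state at $e_0 \in \ell^2_\kappa$, so that $(1\tensor\mu)\circ\tilde\pi$ is the compression $m \mapsto (1\tensor\hat e_0^*)\tilde\pi(m)(1\tensor\hat e_0) = w^*(\sigma(m)\tensor 1)w$ with $w = u^*(1\tensor\hat e_0)$. The extension clause of Lemma \ref{Completely.B} pins down $w = (1\tensor\hat e_0)v$: both are isometries $\K \To \H\tensor\ell^2_\kappa$ intertwining $n'$ with $\rho(n')\tensor 1$, and the extension property gives $w^*(1\tensor\hat e_0)v = v^*v = 1$, whence $\|w\xi - (1\tensor\hat e_0)v\xi\| = 0$ for all $\xi$. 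Therefore $(1\tensor\mu)(\tilde\pi(m)) = v^*\sigma(m)v = \psi(m)$, which after the flip reads $(\mu\stensor 1)\circ\pi = \psi$. I expect the main obstacle to be the bookkeeping around Lemma \ref{Completely.B}, namely verifying both that $\tilde\pi$ lands precisely in $\N\stensor\B(\L)$ and that $w = (1\tensor\hat e_0)v$, so that slicing by the state at $e_0$ recovers $\psi$ exactly; constructing the covariance representation $\rho$ is the other delicate ingredient, but it is standard once complete positivity is invoked for its boundedness.
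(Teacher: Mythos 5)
Your proposal is correct and follows essentially the same route as the paper's proof: Stinespring dilation, the covariance representation of $\N'$ on the dilation space (with complete positivity used exactly where you use it, to bound $\rho(n')$ via the positivity of $[(\|n'\|^2 - n'^*n')\psi(m_i^*m_j)]$), amplification by Lemma \ref{Completely.B} to replace the intertwining isometry with a unitary, the commutation theorem to place $\pi(\M)$ inside $\B(\L)\stensor\N$, and the vector state at $e_0$ to recover $\psi$. The only differences are cosmetic (you apply Lemma \ref{Completely.B} to $v^*$ rather than $v$ and identify $w$ with $(1\tensor\hat e_0)v$ directly, where the paper instead uses the identity $(\hat e_0\hat e_0^*\tensor 1)u(\hat e_0\tensor 1)=u(\hat e_0\tensor 1)$), so nothing further is needed beyond filling in the routine normality check for $\rho$.
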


\begin{proof}
We may assume without loss of generality that $\N \subsetof \B(\K)$ is a von Neumann algebra. Applying Stinespring's Theorem to $\psi$, we find a representation $\sigma: \M \To \B(\H)$ and an isometry $v \in \B(\K , \H)$ such that $\psi = v^*\sigma(\cdot )v$ and $\H = \overline{\sigma(\M)v\K}$.

We now show that $\N'$ acts on $\H$ via $\phi(n'): \sigma(m) v \zeta \mapsto \sigma(m) v (n'\zeta)$. Fix $n' \in \N' \subsetof \B(\K)$, and let $\sum_i \sigma(m_i) v \zeta_i$ be a finite sum. We are interested in showing that the following quantity is positive:
\begin{align*}
\| n'\|^2 \| \sum_i \sigma(m_i) & v \zeta_i\|^2 - \| \sum_i \sigma(m_i) v (n' \zeta_i)\|^2
\\ &=
\| n' \|^2 \sum_{i,j} \langle \zeta_i| v^* \sigma(m_i^*m_j) v \zeta_j\rangle - \sum_{i,j} \langle \zeta_i | n'^* v^* \sigma(m_i^* m_j) v n' \zeta_j \rangle
\\ &=
\sum_{i,j} \left\langle \zeta_i \middle| \left( \|n'\|^2 \psi(m_i^* m_j) - n'^* \psi(m_i^*m_j)n' \right) \zeta_j\right\rangle 
\end{align*}
Thus, it's sufficient to show that the matrix $[\|n'\|^2 \psi(m_i^* m_j) - n'^* \psi(m_i^*m_j)n' ]_{i,j}$ is positive. Noting that $\psi(m_i^* m_j) \in \N$, and therefore commutes with $n'$ and $n'^*$, we calculate:
\begin{align*}
[\|n'\|^2 \psi(m_i^*m_j) - n'^*\psi(m_i^*m_j) n']
& =
[(\|n'\|^2 - n'^* n') \psi(m_i^* m_j)]
\\ &=
(\|n'\|^2 - n'^* n')^{1/2}  [\psi(m_i^* m_j) ] (\|n'\|^2 - n'^* n')^{1/2}
\end{align*}
Since $\psi$ is completely positive, this yields the desired conclusion. Thus we have defined a bounded operator $\phi(n') \in \B(\H)$. Evidently, $\phi$ is a unital $*$-homomorphism. 

To show that $\phi$ is normal, we suppose that $n_\lambda'\in \N'$ is a descending net of positive operators with greatest lower bound $0$, so that $\phi(n_\lambda') \in \B(\H)$ is a descending net of positive operators with some greatest lower bound $x$. Both nets converge in the strong operator topology, so for every vector of the form $\sigma( m)v  \zeta$ we find that $x (\sigma( m) v  \zeta) = \lim_\lambda \phi(n'_\lambda) (\sigma( m) v  \zeta) = \lim_\lambda \sigma( m ) v (n'_\lambda \zeta) = 0$. Since $\H = \overline{\sigma(\M)v\K}$, $x=0$; thus, $\phi$ is normal. Finally, note that $\phi$ is faithful because $v$ is an isometry.

We now have two faithful representations of $\N'$: the canonical representation on $\K$, and the representation $\phi$ on $\H$. By the definition of $\phi$, the Stinespring operator $v$ intertwines these representations in the sense that $\phi(n')(v \zeta) = v n' \zeta$ for all $\zeta \in \K$. Applying Lemma \ref{Completely.C} above, we find a unitary $u\in \B(\ell^2_\kappa \tensor \K, \ell^2_\kappa \tensor \H)$ such that $(\hat e_0^* \tensor 1)u(\hat e_0 \tensor 1) = v$, and $(1 \tensor \phi(n'))u = u (1 \tensor n' )$ for all $n' \in \N'$. We pull the Stinespring representation $\sigma: \M \To \B(\H)$ back along this unitary, i.e., we define $\pi: \M \To \B(\ell^2_\kappa \tensor \K)$ by $\pi(m) = u^*(1 \tensor \sigma(m)) u$. It remains to show first that $\pi(\M) \subsetof \B(\ell^2_\kappa) \stensor \N$, and second that $(\langle e_0|  (\cdot)e_0\rangle\stensor 1) \circ \pi = \psi$.

To prove the first claim, we appeal to the commutation theorem. Fix $m \in \M$, and $n' \in \N'$. By definition of $\phi$, $\phi(n')$ commutes with $\sigma(m)$. Conjugating by $u$, we find that $1 \tensor n'$ commutes with $\pi(m)$. Therefore, $\pi(\M) \subsetof (\CC \stensor \N')' = \B(\ell^2_\kappa) \stensor \N$.

To prove the second claim, we note that since both $u$ and $v$ are isometries, the equation $(\hat e_0^* \tensor 1) u (\hat e_0 \tensor 1) = v$ implies that $(\hat e_0 \hat e_0^* \tensor 1)u (\hat e_0 \tensor 1) = u (\hat e_0 \tensor 1)$. We now calculate:
\begin{align*}
(\langle e_0|  (\cdot)e_0\rangle\stensor 1)(\pi(m))
&=
(\hat e_0^* \tensor 1)u^*(1 \tensor \sigma(m))u(\hat e_0 \tensor 1)
\\ &=
(\hat e_0^* \tensor 1)u^*(  1 \tensor \sigma(m))(\hat e_0 \hat e_0^* \tensor 1 )u(\hat e_0 \tensor 1)
\\ &=
v^* \sigma(m) v
= 
\psi(m)
\end{align*}
\end{proof}

\begin{proof}[Proof of Theorem \ref{Completely.A}]
Let $\psi: \M \To \N$ be a unital normal completely positive map. By Lemma \ref{Completely.C} above, there exist a Hilbert space $\L$, a morphism $\pi: \M \To \B(\L) \stensor \N$, and a state $\mu: \B(\H) \To \CC$, such that $(\mu \stensor 1) \circ \pi = \psi$. We now apply the universal property of $\varepsilon: \M \To \M^{\ast \N} \stensor \N$ to find a morphism $\rho: \M^{\ast \N} \To \B(\L)$ such that $(\rho \stensor 1) \circ \varepsilon = \pi$. The trivial computation $\psi = (\mu \stensor 1) \circ \pi = ((\mu\circ \rho) \stensor 1)\circ \varepsilon$ then shows that $\mu \circ \rho$ does the trick.
\end{proof}

\begin{corollary}
Let $\R$ be a $W^*$-algebra. If $\psi: \M \To \R \stensor \N$ is a unital normal completely positive map, then there exists a unital normal completely positive map $\varphi: \M^{\ast \N} \To \R$ such that $\psi =  (\varphi \stensor 1) \varepsilon$.
\end{corollary}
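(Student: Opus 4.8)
The plan is to re-run the proof of Theorem \ref{Completely.A}, but applied to the single $W^*$-algebra $\R \stensor \N$ in place of $\N$, using a slice map $\mu \stensor 1_\R$ where that proof used a genuine state $\mu$. Once the analytic input (Lemma \ref{Completely.C}) is treated as a black box, everything else is formal and flows from the universal property of the free exponential.

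First I would invoke Lemma \ref{Completely.C}, taking the $W^*$-algebra $\R \stensor \N$ to play the role of the codomain algebra. Since $\psi : \M \To \R \stensor \N$ is a unital normal completely positive map, the lemma furnishes a Hilbert space $\L$, a morphism (i.e.\ unital normal $*$-homomorphism) $\pi_0 : \M \To \B(\L) \stensor (\R \stensor \N)$, and a normal state $\mu : \B(\L) \To \CC$ such that $(\mu \stensor 1) \circ \pi_0 = \psi$. Using the associativity isomorphism of the symmetric monoidal category $(\Wstar, \stensor)$, I would then regard $\pi_0$ as a morphism $\M \To (\B(\L) \stensor \R) \stensor \N$. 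The universal property of the coevaluation $\varepsilon = \varepsilon_{\M,\N} : \M \To \M^{\ast \N} \stensor \N$ (Theorem \ref{Free.A}), applied with $\B(\L) \stensor \R$ as the auxiliary algebra, yields a unique morphism $\rho : \M^{\ast \N} \To \B(\L) \stensor \R$ with $(\rho \stensor 1_\N) \circ \varepsilon = \pi_0$. Finally I would set $\varphi = (\mu \stensor 1_\R) \circ \rho : \M^{\ast \N} \To \R$, identifying $\CC \stensor \R$ with $\R$.

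It then remains to verify the two required properties. The map $\varphi$ is unital, normal, and completely positive because $\rho$ is a morphism of $\Wstar$, hence of $\pWstar$, while $\mu \stensor 1_\R$ is a morphism of $\pWstar$ (the category $(\pWstar, \stensor)$ being symmetric monoidal and $\mu$ a state), and a composite of such is again such. The defining identity follows from bifunctoriality of $\stensor$ on $\pWstar$:
\[
(\varphi \stensor 1_\N) \circ \varepsilon = \big((\mu \stensor 1_\R) \stensor 1_\N\big) \circ (\rho \stensor 1_\N) \circ \varepsilon = (\mu \stensor 1_{\R \stensor \N}) \circ \pi_0 = \psi.
\]

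I expect no genuine obstacle here: the statement is a formal corollary of the universal property of $\M^{\ast \N}$, in exactly the way Theorem \ref{Completely.A} was, with the partial slice $\mu \stensor 1_\R$ absorbing the extra tensor factor. The only points demanding care are purely bookkeeping ones, namely tracking the canonical associativity isomorphism when regrouping $\B(\L) \stensor (\R \stensor \N)$ as $(\B(\L) \stensor \R) \stensor \N$, and invoking that tensoring a completely positive map with an identity stays completely positive. One could alternatively route through the isomorphism $(\M^{\ast \N})^{\ast \R} \iso \M^{\ast (\R \stensor \N)}$ together with Theorem \ref{Completely.A} for the pair $(\M, \R \stensor \N)$, but that forces one to unwind those identifications and the naturality of the correspondences, whereas the direct route above avoids this.
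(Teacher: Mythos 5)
Your argument is correct, and it rests on the same analytic core as the paper's proof, namely Lemma \ref{Completely.C} applied to $\psi$ viewed as a unital normal completely positive map into the single $W^*$-algebra $\R \stensor \N$; but the formal routing differs. The paper first cites Theorem \ref{Completely.A} for the pair $(\M, \R \stensor \N)$ to obtain a state $\mu$ on $\M^{\ast (\R \stensor \N)}$ with $\psi = (\mu \stensor 1_{\R \stensor \N}) \circ \varepsilon_{\M, \R \stensor \N}$, and then invokes the universal property of $\M^{\ast \N}$ to produce the comparison morphism $\rho: \M^{\ast \N} \To \M^{\ast (\R \stensor \N)} \stensor \R$ satisfying $\varepsilon_{\M, \R \stensor \N} = (\rho \stensor 1_\N) \circ \varepsilon_{\M, \N}$, finally setting $\varphi = (\mu \stensor 1_\R) \circ \rho$. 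You instead unfold Theorem \ref{Completely.A} back into its proof: you take the concrete dilation $\pi_0: \M \To \B(\L) \stensor \R \stensor \N$ supplied by Lemma \ref{Completely.C} and feed it directly into the universal property of $\M^{\ast \N}$ with auxiliary algebra $\B(\L) \stensor \R$. Your route is marginally more economical in that $\M^{\ast (\R \stensor \N)}$ never appears, at the cost of re-opening the proof of Theorem \ref{Completely.A} rather than using it as a black box. The ``alternative'' you dismiss at the end is essentially the paper's proof, except that the paper does not need the full isomorphism $(\M^{\ast \N})^{\ast \R} \iso \M^{\ast (\R \stensor \N)}$ --- only the single comparison morphism $\rho$ coming from the universal property --- so the unwinding of identifications you were worried about never arises. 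Both routes are valid, and the bookkeeping points you flag (the associativity regrouping, and the fact that slicing by a state against an identity preserves complete positivity) are handled correctly.
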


\begin{proof}
Applying Theorem \ref{Completely.A}, we find a state $\mu: \M^{\R \stensor \N} \To \CC$ such that $\psi = (\mu \stensor 1_{\R \stensor \N}) \circ \varepsilon_{\M, \R \stensor \N}$. By the definition of $\M^{\ast \N}$, there's a unique morphism $\rho: \M^{\ast\N} \To \M^{\ast \R \stensor \N} \stensor \R$ such that $\varepsilon_{\M, \R \stensor \N} =(\rho \stensor 1_\N) \circ \varepsilon_{\M, \N}  $. Therefore, $\psi = [(\mu\stensor 1_\R) \circ \rho) \stensor 1_\N] \circ \varepsilon_{\M, \N}$.
\end{proof}

\section{Measurement}\label{Measurement}

Consider a version of the famous Stern-Gerlach experiment, in which we first prepare an electron in a spin-up state $\mu_\uparrow$, and then measure its spin along the $x$-axis.  We formalize these two steps as quantum operations. The first of these operations is $\o \mu_\uparrow:\{\ast\} \To \o M_2(\CC)$, which prepares the state $\mu_\uparrow$ ``from scratch''. The second operation is $\o\pi: \o M_2(\CC) \To \{-, + \}$, where if $\delta_-$ and $\delta_+$ denote the two nontrivial projections of $\CC^{\{-,+\}}$, then $\pi(\delta_-)$ and $\pi(\delta_+)$ are projections onto the negative spin and positive spin subspaces respectively. This second operation sets a classical two-state system, perhaps a lightbulb, based on the qubit system $\o M_2(\CC)$, the spin of the electron. Composing these two transformations, we obtain a morphism $\o(\mu_\uparrow \circ \pi): \{\ast\} \To \{-, +\}$ in $\o \pWstar$, i.e., a probability measure on the set $\{- ,+\}$. The probability of both outcomes is $\frac 1 2$, in agreement with quantum theory and experiment.

A number of observations are in order. First, each of the three systems discussed above are obtained by specifying a small subalgebra of observables from a much larger algebra of observables of a much larger system, which encompasses the electron, the lightbulb and eventually the experimenter, his lab, and his galaxy. Thus, the trivial quantum system $\o \CC$ is not isolated in some mysterious apparatus in the physicist's lab, but is rather the universal system all of whose states have been identified. Similarly, the spin system $\o M_2(\CC)$ consists of even less than an electron; its non-spin properties are being ignored.

Second, the preparation of the initial state $\mu_\uparrow$ might be further broken down: The electron is emitted by a source in a spin state of maximum entropy, and is then processed via a Stern-Gerlach apparatus, the lower of whose branches ends in an adsorptive stopper. An emitted electron may disappear down the lower branch, never to be heard from again; the apparatus may fail to set the target spin system altogether. The operation induced by the stopper can be represented by the zero $*$-homomorphism $0 \To M_2(\CC)$, whose formal opposite is something like the empty partial function. In general, we define a partial function from $\o \N$ to $\o \M$ to be simply a normal $*$-homomorphism $\M \To \N$.  A probabilistic partial function from $\o \N$ to $\o \M$ is then either a contractive normal completely positive map, or just any normal completely positive map, depending on one's intentions.

Third, according to the language of this paper, the measurement operation $\o\pi: \o M_2(\CC) \To \o \CC^{\{-,+\}}$ is deterministic. The reader may find this conclusion counterintuitive because quantum physics is notoriously nondeterministic, as the Stern-Gerlach experiment demonstrates.  The above description of the Stern-Gerlach experiment blames this circumstance on the preparation of the initial state. Resorting to the vague term `configuration', we might summarize this situation by saying that each configuration of $\o M_2(\CC)$ has a well defined spin in any given direction, but it is impossible to manipulate this system in such a way that we can be certain that it is in a particular configuration. Though this language embraces a kind of realism, it is certainly not the realism of the Einstein, Podolsky and Rosen. The Kochen-Specker Theorem \cite{KochenSpecker67} can easily be applied to show that, in general, it is impossible to assign values to the observables of a quantum system in a way that respects the composition of quantum operations.

\section{Appendix: The Normal Stinespring Theorem}\label{Appendix: The Normal Stinespring Theorem}

For reference, we include a proof of Stinespring's Theorem for normal maps. Stinespring proved the theorem that bears his name in his article \emph{Positive Functions on $C^*$-algebras} \cite{Stinespring55}, which didn't look at $W^*$-algebras, and therefore doesn't show that in this context the Stinespring representation is normal.

\begin{definition}
Let $\M$ and $\N$ be $W^*$-algebras. A $\CC$-linear map $\psi: \M \To \N$
\begin{itemize}
\item is \emph{normal} if it is $w^*$-$w^*$ continuous, and
\item is \emph{completely positive} if for any positive $n \times n$ matrix $[m_{ij}]$ with $m_{ij} \in \M$, the matrix $[\psi(m_{ij})]$ is also positive. 
\end{itemize}
\end{definition}

\begin{theorem}[Stinespring's Theorem for Normal Maps]\label{Normal.B}
Let $\M$ be a $W^*$-algebra, and let $\K$ be a Hilbert space. If $\psi: \M \To \B(\K)$ is a normal completely positive map, then there exist
\begin{enumerate}
\item
a Hilbert space $\H$,
\item
a normal unital $*$-homomorphism $\sigma: \M \To \B(\H)$, and
\item
a bounded operator $v \in \B(\K, \H)$
\end{enumerate}
such that $\psi(m) = v^* \sigma(m) v$ for all $m \in \M$.
\end{theorem}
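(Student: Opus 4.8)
The plan is to imitate the classical GNS-style construction that Stinespring used, reserving the real effort for the one feature that the classical statement does not address, namely the normality of $\sigma$.

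First I would form the algebraic tensor product $\M \odot \K$ and equip it with the sesquilinear form determined by
\[
\langle m_1 \tensor \zeta_1 \,|\, m_2 \tensor \zeta_2\rangle = \langle \zeta_1 \,|\, \psi(m_1^* m_2)\zeta_2\rangle.
\]
Complete positivity is exactly the hypothesis needed to make this form positive semidefinite: for a finite sum $\sum_i m_i \tensor \zeta_i$, its squared length is $\sum_{i,j}\langle \zeta_i \,|\, \psi(m_i^* m_j)\zeta_j\rangle$, which is nonnegative because $[m_i^* m_j]$ is a positive matrix over $\M$ and hence $[\psi(m_i^* m_j)]$ is positive. Quotienting by the null space and completing yields the Hilbert space $\H$. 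I would then define $\sigma(m)$ by left multiplication, $\sigma(m)(m' \tensor \zeta) = mm' \tensor \zeta$, and check boundedness through the matrix inequality $[m_i^* m^* m m_j] \leq \|m\|^2 [m_i^* m_j]$ — valid because $\|m\|^2 - m^*m$ is a positive square — followed again by complete positivity, giving $\|\sigma(m)\| \leq \|m\|$. Defining $v\zeta$ to be the class of $1 \tensor \zeta$, where unitality of $\psi$ enters, a direct computation gives $\langle \zeta' \,|\, v^*\sigma(m)v\,\zeta\rangle = \langle\zeta' \,|\, \psi(m)\zeta\rangle$, so that $\psi(m) = v^*\sigma(m)v$. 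That $\sigma$ is a unital $*$-homomorphism is routine.

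The substantive step is to show that $\sigma$ is normal. I would verify this by checking that $\sigma$ preserves suprema of bounded increasing nets of positive elements. Given such a net $m_\lambda \uparrow m$, the operators $\sigma(m_\lambda)$ form a bounded increasing net of positive operators, so they admit a supremum $s \leq \sigma(m)$, and it suffices to show $s = \sigma(m)$. Since vectors of the form $a \tensor \zeta$ are dense in $\H$, it is enough to test against them, where
\[
\langle a \tensor \zeta \,|\, \sigma(m_\lambda)(a \tensor \zeta)\rangle = \langle \zeta \,|\, \psi(a^* m_\lambda a)\zeta\rangle.
\]
Because conjugation $a^*(\cdot)a$ is order continuous, $a^* m_\lambda a \uparrow a^* m a$, and then normality of $\psi$ forces $\psi(a^* m_\lambda a) \to \psi(a^* m a)$ in the weak operator topology; hence the displayed quantities converge to $\langle\zeta \,|\, \psi(a^* m a)\zeta\rangle = \langle a \tensor \zeta \,|\, \sigma(m)(a \tensor \zeta)\rangle$. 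This pins down $s = \sigma(m)$ and establishes normality.

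I expect the main obstacle to be precisely this normality argument, since it is the part with no classical analogue: one must confirm that the test vectors $a \tensor \zeta$ are genuinely dense, that conjugation is order continuous so that $a^* m_\lambda a$ has the expected supremum, and that weak convergence on a dense set together with the monotonicity and uniform boundedness of $\sigma(m_\lambda)$ truly identifies the supremum with $\sigma(m)$. The remaining steps are the standard Stinespring construction and should present no difficulty beyond bookkeeping.
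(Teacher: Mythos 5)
Your proposal is correct and follows essentially the same route as the paper: the identical GNS-style construction of $\H$, $\sigma$, and $v$, with complete positivity supplying both the positivity of the sesquilinear form and the bound $\|\sigma(m)\|\leq\|m\|$. The only cosmetic difference is in the normality step, where you test bounded increasing nets against the quadratic form on the total set of vectors $a\otimes\zeta$, while the paper tests decreasing nets with infimum $0$ against the norms $\|x(a\otimes\zeta)\|$; the two arguments are equivalent.
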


\begin{proof}
Let $\CC|\M|$  and $\CC |\K|$ denote the complex vector spaces formally spanned the elements of $\M$ and $\K$ respectively, and let $\CC|\M| \tensor \CC |\K|$ denote their algebraic tensor product. The equation $$\langle m_0 \tensor \zeta_0 | m_1 \tensor \zeta_1\rangle_\H = \langle \zeta_0| \psi(m_0^*m_1) \zeta_1\rangle$$ trivially defines a symmetric sesquilinear form on $\CC|\M| \tensor \CC |\K|$.

The complete positivity of $\psi$ then implies that the sesquilinear form $\langle \cdot | \cdot\rangle_\H$ is positive. For any element $\sum_i \alpha_i m_i \tensor \zeta_i$ of the tensor product $\CC|\M| \tensor \CC |\K|$,
\begin{align*}
\left\langle \sum_i \alpha_i m_i \tensor \zeta_i \middle| \sum_i \alpha_i m_i \tensor \zeta_i\right\rangle_\H  = \sum_{ij} \overline \alpha_i \alpha_j \langle \zeta_i | \psi(m_i^*m_j)\zeta_j\rangle \geq 0
\end{align*}
because the matrix $[\psi(m_i^*m_j)]$ is positive. As usual, we complete the quotient of $\CC|\M| \tensor \CC|\K|$ by the subspace of norm-zero vectors, and obtain the Hilbert space $\H$.

An analogous computation shows that for every operator $m \in \M$, and any vector of the form $\sum_i \alpha_i m_i \tensor \zeta_i$ in the tensor product $\CC|\M| \tensor \CC |\K|$, 
$$
\|m\|^2\left\langle \sum_i \alpha_i m_i \tensor \zeta_i \middle| \sum_i \alpha_i m_i \tensor \zeta_i\right\rangle_\H
- \left\langle \sum_i \alpha_i (m m_i) \tensor \zeta_i \middle| \sum_i \alpha_i (mm_i) \tensor \zeta_i\right\rangle_\H \geq 0,
$$
so the equation $\sigma(m)(m_0 \tensor \zeta_0) = (mm_0) \tensor \zeta_0$ defines a bounded operator on $\H$. It is straightforward to show that $\sigma: \M \To \B(\H)$ is a unital $*$-homomorphism.

To show that $\sigma$ is normal, let $m_\lambda \To 0$ be a descending net whose greatest lower bound is $0$. The net $\sigma(m_\lambda) \in \B(\H)$ is also a descending net of positive operators and therefore has a greatest lower bound $x$. In particular, $\sigma(m_\lambda)$ converges to $x$ in the strong operator topology, and $m_\lambda^2$ converges to $0$ in the $w^*$-topology. For every vector of the form $ m \tensor  \zeta \in \H$,
\begin{align*}
\|x ( m \tensor  \zeta)\|^2
& =
\|(\mathop{\mathrm{lim}}^{SOT}_{\lambda \To \infty} \sigma(m_\lambda))( m \tensor  \zeta)\|^2
=
\| \lim_{\lambda \To \infty} (m_\lambda  m) \tensor  \zeta  \|^2
\\ & =
\lim_{\lambda \To \infty} \| (m_\lambda  m ) \tensor  \zeta\|^2
= 
\lim_{\lambda \To \infty} \langle  \zeta | \psi( m m_\lambda^2  m)  \zeta\rangle
=
0
\end{align*}
Since the span of vectors of the form $ m \tensor  \zeta$ is dense in $\H$, we conclude that $x =0$, i.e., that $\sigma$ is normal.

Finally, define $v: \K \To \H$ by $\zeta \mapsto 1 \tensor \zeta$. Straightforward calculation shows that $v$ is a bounded linear operator. It does the trick: For all $m \in \M$ and $\zeta_0, \zeta_1 \in \K$,
$$\langle \zeta_0 | (v^* \sigma(m)v)\zeta_1\rangle = \langle  1 \tensor \zeta_0| \sigma(m)(1 \tensor \zeta_1) \rangle = \langle \zeta_0| \psi(m) \zeta_1\rangle.$$
\end{proof}

\begin{bibdiv}
\begin{biblist}

\bib{BaezStay11}{article}{
author={J. Baez}
author={M. Stay}
title={Physics, Topology, Logic and Computation: a Rosetta Stone}
journal={Lecture Notes in Physics}
volume={813}
publisher={Springer}
date={2011}
pages={95-174}
}

\bib{BedosKaliszewskiQuigg}{article}{
author={E. B\'edos}
author={S. Kaliszewski}
author={J. Quigg}
title={Reflective-Coreflective Equivalence}
journal={arXiv:1012.4154}
date={2010}
}

\bib{Dauns72}{article}{
author={J. Dauns}
title={Categorical $W^*$-Tensor Product}
journal={Transations of the American Mathematical Society}
volume={166}
date={1972}
}

\bib{Dixmier81}{book}{
author={J. Dixmier}
title={Von Neumann Algebras}
translator={F. Jellett}
series={North-Holland Mathematical Library}
volume={27}
publisher={Elsevier}
date={1981}
}

\bib{EffrosRuan00}{book}{
author={E. G. Effros}
author={Z. J. Ruan}
title={Operator Spaces}
publisher={Oxford University Press}
date={2000}
}

\bib{GhezLimaRoberts85}{article}{
author={P. Ghez}
author={R. Lima}
author={J. E. Roberts}
title={$W^*$-Categories}
journal={Pacific Journal of Mathematics}
volume={120}
number={1}
date={1985}
}

\bib{Guichardet66}{article}{
author={A. Guichardet}
title ={Sur la Cat\'egorie des Alge\`ebres de Von Neumann}
journal={Bulletin des Sciences Math\'ematicques}
volume={90}
date={1966}
pages={41-64}
}

\bib{Haagerup75}{article}{
author={U. Haagerup}
title={The Standard Form of Von Neumann Algebras}
journal={Mathematica Scandinavica}
volume={37}
date={1975}
pages={271-283}
}

\bib{HeunenLandsmanSpitters07}{article}{
author={C. Heunen}
author={N. P. Landman}
author={B. Spitters}
title={A Topos for Algebraic Quantum Theory}
journal={arXiv:0709.4364}
date={2007}
}

\bib{HeunenLandsmanSpitters09}{article}{
author={C. Heunen}
author={N. P. Landman}
author={B. Spitters}
title={Bohrification}
journal={arXiv:0909.3468}
date={2009}
}

\bib{KochenSpecker67}{article}{
author={S. Kochen}
author={E. Specker}
title={The Problem of Hidden Variables in Quantum Mechanics}
journal={Journal of Mathematics and Mechanics}
volume={17}
date={1967}
pages={59-87}
}

\bib{Kornell11}{article}{
author={A. Kornell}
title={Quantum Functions}
journal={arXiv:1101.1694}
date={2011}
}

\bib{Kraus83}{book}{
author={K. Kraus}
title={States, Effects and Operations: Fundamental Notions of Quantum Theory}
publisher={Springer-Verlag}
date={1983}
}

\bib{Lurie11}{article}{
author={J. Lurie}
title={Von Neumann Algebras (261y)}
date={2011}
eprint={http://www.math.harvard.edu/~lurie/261y.html}
}

\bib{MacLane71}{book}{
author={S. MacLane}
title={Categories for the Working Mathematician}
publisher={Springer-Verlag}
date={1971}
}

\bib{MacLaneMoerdijk92}{book}{
author={S. MacLane}
author={I. Moerdijk}
title={Sheaves in Geometry and Logic}
publisher={Springer}
date={1992}
}

\bib{Paschke}{article}{
author={W. L. Paschke},
title={Inner Product Modules over $B^*$-Algebras},
journal={Transactions of the Americal Mathematical Society},
volume={182},
date={1973},
pages={443-468}}

\bib{Pedersen79}{book}{
author= {G. K. Pedersen}
title={$C^*$-algebras and their Automorphism Groups}
publisher={Academic Press}
date={1979}
}

\bib{Pedersen88}{book}{
author={G. K. Pedersen}
title={Analysis Now}
publisher={Springer}
date={1988}
}

\bib{Schlesinger99}{article}{
author={K. G. Schlesinger}
title={Toward Quantum Mathematics. I. From Quantum Set Theory to Universal Quantum Mechanics}
journal={Journal of Mathematical Physics}
volume={40}
number={3}
date={1999}
}

\bib{Segal51}{article}{
author={I. E. Segal}
title={Equivalence of Measure Spaces}
journal={American Journal of Mathematics}
volume={73}
number={2}
date={1951}
pages={275-313}
}

\bib{Soltan06}{article}{
author={P. M. So\l tan}
title={Quantum Families of Maps and Quantum Semigroups of Finite Quantum Spaces}
journal={arXiv:0610922}
date={2006}
}

\bib{Stinespring55}{article}{
author={W. F. Stinespring}
title={Positive Functions on C*-algebras}
journal={Proceedings of the American Mathematical Society}
volume={6}
number={2}
date={1955}
}

\bib{TakesakiI}{book}{
author={M. Takesaki},
title={Theory of Operator Algebras I},
publisher={Springer},
date={1979}
}

\bib{Tambara90}{article}{
author={D. Tambara}
title={The Coendomorphism Bialgebra of an Algebra}
journal={Journal of the Faculty of Science of the University of Tokyo, Section 1A, Mathematics}
volume={37}
date={1990}
pages={425-456}
}
 
\bib{Weaver10}{article}{
author={N. Weaver},
title={Quantum Relations},
journal={arXiv:1005.0354}
date={2010}
}

 \end{biblist}
\end{bibdiv}

\end{document}